\numberwithin{equation}{section}
\newenvironment{myproof}[2]{\paragraph{\textit{Proof of {#1} }{#2}.}}{\hfill$\square$}
\theoremstyle{plain}
\newtheorem{theorem}{Theorem}[section]
\newtheorem{lemma}[theorem]{Lemma}
\newtheorem{corollary}[theorem]{Corollary}
\newtheorem{proposition}[theorem]{Proposition}
\theoremstyle{definition}
\newtheorem{definition}[theorem]{Definition}
\newtheorem{assume}[theorem]{Assumption}
\newtheorem{remark}[theorem]{Remark}
\newtheorem{condition}[theorem]{Condition}
\newtheorem{problem}[theorem]{Problem}
\newtheorem{example}[theorem]{Example}
\def\CC{\mathbb{C}}
\def\RR{\mathbb{R}}
\def\ZZ{\mathbb{Z}}
\def\a{\alpha}
\def\b{\beta}
\def\g{\gamma}
\def\G{\Gamma}
\def\w{\omega}
\def\l{\lambda}
\def\ra{\rightarrow}
\def\tt{\theta}
\def\zt{\zeta}
\def\mb{\mathbf}
\def\ol{\overline}
\def\O{\Omega}
\def\L{L}
\def\gg{\mathfrak{g}}
\def\kk{\mathfrak{k}}
\def\pp{\mathfrak{p}}
\def\UU{\mathcal{U}}
\def\t{\mathfrak{t}}
\def\C{C_M}
\def\pj{\pi_{SQ}\circ\phi}
\def\X{\mathcal{X}}
\def\Ham{Hamiltonian }
\def\e{\epsilon}
\def\J{\mathcal{J}}
\def\ul{\underline}
\def\S{\Sigma}
\def\sle{strip-like end }
\def\pt{\partial}
\def\I{\mathcal{I}}
\def\DH{D_{\mathbb{H}}}
\def\mr{\mathring}
\def\AA{\mathcal{A}}
\def\ma{m_{\a}}
\newcommand{\maaa}[1]{m_{#1}}
\def\CO{\mathcal{O}}
\newcommand{\Ra}[1]{R^+_{#1}}
\newcommand{\ka}[1]{\mathfrak{k}_{#1,\a}}
\newcommand{\pa}[1]{\mathfrak{p}_{#1,\a}}
\newcommand{\cent}[2]{C(#1,#2)}
\newcommand{\norm}[2]{N(#1,#2)}
\def\wc{\mathcal{C}}
\def\RF{\Lambda}
\newcommand{\RFF}[1]{\Lambda_{#1}}
\newcommand{\RFFF}[1]{\Lambda_{#1}}
\def\monoconst{\tau}
\def\sq{\pi_{SQ}}
\def\coeff{\Bbbk}
\def\capping{moment capping }
\def\afflag{\mathbb{L}}
\def\involute{\sigma}
\def\TTT{\mathbb{T}}
\def\rmor{\varphi}
\def\VT{T_{\sq}^v\C}
\def\VTT{T_{\pi}^v\C}
\DeclareMathOperator{\im}{Im}
\DeclareMathOperator{\ind}{Ind}
\DeclareMathOperator{\diam}{diam}
\DeclareMathOperator{\Lag}{Lag}
\DeclareMathOperator{\id}{id}
\DeclareMathOperator{\ad}{ad}
\DeclareMathOperator{\Aut}{Aut}
\DeclareMathOperator{\pr}{pr}
\DeclareMathOperator{\Hor}{Hor}
\DeclareMathOperator{\Ver}{Ver}
\DeclareMathOperator{\Ad}{Ad}
\DeclareMathOperator{\area}{Area}
\DeclareMathOperator{\Lie}{Lie}
\DeclareMathOperator{\crit}{Crit}
\DeclareMathOperator{\lagcor}{LagCor}
\begin{document}
\title{Applications of the theory of Floer to symmetric spaces}

\author{Hanwool Bae}
\address{Center for Quantum Structures in Modules and Spaces, Seoul National University, Seoul, South Korea}
\email{hanwoolb@gmail.com}

\author{Chi Hong Chow}
\address{The Institute of Mathematical Sciences and Department of Mathematics, The Chinese University of Hong Kong, Shatin, Hong Kong}
\email{chchow@math.cuhk.edu.hk}

\author{Naichung Conan Leung}
\address{The Institute of Mathematical Sciences and Department of Mathematics, The Chinese University of Hong Kong, Shatin, Hong Kong}
\email{leung@math.cuhk.edu.hk}

\begin{abstract}
We quantize the problem considered by Bott-Samelson who applied Morse theory to any compact symmetric space $G/K$ and the associated real flag manifold $G_{\mathbb{R}}/B$ which is a real locus of a complex partial flag variety $G_{\mathbb{C}}/P_{\sigma}$. We prove that the Pontryagin ring $H_{-*}(\Omega(G/K))$ of the based loop space $\Omega(G/K)$ is isomorphic to the Floer cohomology ring $HF^*(G_{\mathbb{R}}/B,G_{\mathbb{R}}/B)$ after localization. When $G/K$ is a Lie group, this is a conjecture of Peterson, proved combinatorially by Lam-Shimozono, in the context of quantum cohomologies of complex flag varieties.

Our approach is geometric in nature: we construct a Lagrangian correspondence from $T^*(G/K)$ to $G_{\mathbb{C}}/P_{\sigma}$ which geometrically composes with a cotangent fiber to $G_{\mathbb{R}}/B$, and compute the linear part of the associated Ma'u-Wehrheim-Woodward's $A_{\infty}$ homomorphism from a Floer model of $\Omega(G/K)$ to $CF^*(G_{\mathbb{R}}/B,G_{\mathbb{R}}/B)$. The crux is to make use of the geometry of $G/K$ to construct specific perturbation data which enables us to reduce the computations to the case when $G/K$ is a torus.
\end{abstract}

\subjclass[2010]{57R58, 57T15}
\keywords{Floer theory, symmetric spaces}

\maketitle


\section{Introduction}\label{intro}
Let $G_{\RR}$ be a real reductive Lie group. Fix a Cartan involution $\involute$ which gives a decomposition
\[ \gg_{\RR}=\kk\oplus\sqrt{-1}\pp\]
of $\gg_{\RR}:=\Lie(G_{\RR})$ into the $(+1)$-eigenspace $\kk$ and $(-1)$-eigenspace $\sqrt{-1}\pp$. Motivated by the famous paper of Bott-Samelson \cite{BS}, we consider the following two spaces: any \textit{compact symmetric space} $G/K$ associated to the symmetric pair $(\gg,\kk)$ where $\gg:=\kk\oplus\pp$, and the \textit{real complete flag manifold} $G_{\RR}/B$, the quotient of $G_{\RR}$ by its Borel subgroup $B$. The latter is a Lagrangian of a complex flag variety $G_{\CC}/P_{\involute}$ where $G_{\CC}$ is the complexification of $G_{\RR}$ and $P_{\involute}$ is a parabolic subgroup\footnote{The parabolic type of $P_{\involute}$ is defined to be the set of blackened vertices in the Satake diagram associated to $(\gg,\kk)$.}.

\vspace{0.2cm}
\noindent\textbf{Main theorem.} \textit{There exists a ring homomorphism
\[ H_{-*}(\O(G/K))\ra HF^*(G_{\RR}/B,G_{\RR}/B)\]
from the Pontryagin ring of the based loop space $\O(G/K)$ to the Floer cohomology ring of $G_{\RR}/B$. It is injective and becomes an isomorphism after localization for the following cases:
\begin{enumerate}
\item $G/K$ is a Lie group and the coefficient ring is arbitrary;
\item $G/K$ is arbitrary and the coefficient ring is equal to $\ZZ_2$.
\end{enumerate}}

\begin{example} \label{groupcase} Let $H$ be a compact connected semi-simple Lie group. Consider $G_{\RR}:=H_{\CC}$, the complexification of $H$. Then $G=H\times H$ and $K=\Delta_H$ is the diagonal so that $G/K=(H\times H)/\Delta_H\simeq H$. The real flag manifold $G_{\RR}/B$ is equal to the coadjoint orbit $H/T$ ($T$ is a maximal torus in $H$) and sits inside $G_{\CC}/P_{\involute}\simeq H/T^-\times H/T$ as the diagonal. By the famous theorem of Piunikhin-Salamon-Schwarz \cite{PSS}, $HF^*(G_{\RR}/B,G_{\RR}/B)$ is isomorphic to the quantum cohomology ring $QH^*(H/T)[Q^{\vee}]$ where $Q^{\vee}$ is the unit lattice of $T$.

\begin{corollary} $H_{-*}(\O H)$ is isomorphic as rings to $QH^*(H/T)[Q^{\vee}]$ after localization.
\end{corollary}

\noindent This is the simplified version\footnote{The complete version is an explicit description of the ring map in terms of the affine Schubert classes in the source and the quantum Schubert classes in the target.} of a conjecture of Peterson \cite{Peter}. The only known proof, due to Lam-Shimozono \cite{LS}, is combinatorial in nature. See also the work of Leung-Li \cite{LLi1, LLi2, LLi3}. Our approach, as we will explain shortly, provides a geometric perspective which is not given in \cite{LS}.
\end{example}

\begin{example} \label{glnr} Consider $G_{\RR}:=GL(n,\RR)$, the general linear group over $\RR$. A Cartan involution $\involute$ is given by $A\mapsto (A^{-1})^T$. We have $G=U(n)$ and $K=O(n)$ so that $G/K=U(n)/O(n)$, the Lagrangian Grassmannian. A typical Borel subgroup of $G_{\RR}$ is the subgroup of upper triangular matrices so that $G_{\RR}/B$ is equal to the classical complete flag manifold over $\RR$
\[ F\ell(n;\RR):=\{\mathbf{0}\subset V_1\subset\cdots\subset V_n=\RR^n|~\dim V_i=i\}.\]
It is a real locus of the complex flag variety $F\ell(n;\CC)$ defined similarly. Our main theorem implies that $H_{-*}(\O(U(n)/O(n));\ZZ_2)$ is isomorphic to $HF^*(F\ell(n;\RR),F\ell(n;\RR);\ZZ_2)$ after localization.
\end{example}

To construct the ring homomorphism for our main theorem, we start with a natural Lagrangian correspondence
\[ C:T^*(G/K)\ra G_{\CC}/P_{\involute}\]
which geometrically composes with a cotangent fiber $L$ to give $G_{\RR}/B$. It is a special case of a construction of Guillemin-Sternberg \cite{GS} who generalized the notion of \textit{moment correspondence} introduced by Weinstein \cite{W}. Its existence can be understood as follows:\footnote{For simplicity, we assume that $G$ is semi-simple and $G/K$ is of adjoint type. We emphasize that this assumption is not necessary for the main theorem.} The cotangent bundle $T^*(G/K)$ is diffeomorphic to the complex symmetric space $G_{\CC}/K_{\CC}$. It is well-known that the latter has a \textit{wonderful compactification} $\ol{G_{\CC}/K_{\CC}}$ \`a la de Concini-Procesi \cite{wonderful} which is a smooth projective variety containing $G_{\CC}/K_{\CC}$ as a Zariski open subset whose complement is a normal crossing divisor such that $G_{\CC}/P_{\involute}$ is equal to the intersection of all irreducible components of this divisor. In this picture, $C$ corresponds to a torus bundle associated to the normal bundle of $G_{\CC}/P_{\involute}$ in $\ol{G_{\CC}/K_{\CC}}$.

By the theory of pseudoholomorphic quilted surfaces developed by Ma'u-Wehrheim-Woodward \cite{MWW}, we obtain an $A_{\infty}$ homomorphism
\begin{equation}\label{phi}
\Phi_C: CW^*(L,L)\ra CF^*(G_{\RR}/B,G_{\RR}/B).
\end{equation}
See also the work of Evans-Lekili \cite{EL}. Observe that the source of $\Phi_C$ is infinite dimensional while the target is finite dimensional. We make $CF^*(G_{\RR}/B,G_{\RR}/B)$ infinite dimensional as well, by introducing a natural notion of capping disks. See Section \ref{capdisk} for its definition. By a theorem of Abbondandolo-Schwarz \cite{AS} and Abouzaid \cite{Ab_JSG}, the rings $H_{-*}(\O(G/K))$ and $HW^*(L,L)$ are isomorphic. We now rephrase our main theorem in a more precise form.

\begin{theorem} \label{main} Under the same assumptions as in the main theorem, $H^*(\Phi_C)$ is injective, and there exists a multiplicative subset $S$ of $HW^*(L,L)$ consisting of central elements such that $H^*(\Phi_C)$ becomes an isomorphism after localizing $S$.
\end{theorem}

The most technical part of the proof of Theorem \ref{main} is the analysis of the linear part $\Phi^1_C$ of $\Phi_C$. In Section \ref{perturbdataH}, we will construct bases $\{x_q\}_{q\in Q^{\vee}}$ and $\{y_{w,q}\}_{(w,q)\in W\times Q^{\vee}}$ of $CW^*(L,L)$ and $CF^*(G_{\RR}/B,G_{\RR}/B)$ respectively, where $Q^{\vee}$ is the unit lattice of a maximal torus in $G/K$ and $W$ is the Weyl group. Denote by $\langle \Phi^1_C(x_q), y_{w',q'}\rangle$ the entries of the matrix representing $\Phi^1_C$ with respect to these bases. The outcome of the analysis is described in the following

\vspace{0.2cm}
\noindent \textbf{Key lemma.}  \textit{There exist functions $w:Q^{\vee}\ra W$ and $\ell':W\ra \RR$ such that for any $q\in Q^{\vee}$,
\begin{enumerate}
\item $\langle \Phi^1_C(x_q), y_{w(q),q}\rangle = 1$;
\item if $\langle \Phi^1_C(x_q), y_{w',q'}\rangle\ne 0$, then $\ell'(w')<\ell'(w(q))$ unless $(w',q')=(w(q),q)$.
\end{enumerate}
}

\vspace{0.2cm}
Another consequence of our key lemma is
\begin{theorem} \label{maincor} The Pontryagin rings $H_{*}(\O(G/K);\ZZ_2)$ and $H_{*}(\O H;\ZZ)$ are finitely generated.
\end{theorem}

\begin{remark} The group case of Theorem \ref{maincor} was solved completely by Bott \cite{BMich}. In general, the problem about whether or not $H_{*}(\O(G/K);\coeff)$ is finitely generated is non-trivial, since, due to possible existence of divided power structures, an algebra which is additively isomorphic to a finitely generated algebra is not necessarily finitely generated, even over a field (of positive characteristics). For example, consider the cohomology ring of $\O S^3$. We have, by Serre's spectral sequence, $H^*(\O S^3;\coeff)\simeq \bigoplus_{i=0}^{\infty}\coeff \langle x_i\rangle$ for any $\coeff$, where $\deg(x_i)=2i$. The generators satisfy $x_i\cup x_j = \frac{(i+j)!}{i! j!} x_{i+j}$. Take $\coeff=\ZZ_2$. By elementary number theory, the last equation implies that $x_0,\ldots, x_{2^N-1}$ cannot generate $x_{2^N}$ multiplicatively for any natural number $N$. We conclude that $H^*(\O S^3;\ZZ_2)$ is not finitely generated.
\end{remark}

The proof of the key lemma uses an action-functional argument. In typical applications of this argument, constant solutions are those which contribute to the leading terms. In our case, however, we have to refine it by establishing a positive lower bound for the energy and showing that it is attained by a unique non-constant pseudoholomorphic quilt.

To achieve this, we first consider the open dense subset $\UU\subset T^*(G/K)\simeq T(G/K)$ consisting of vectors which are tangent to a unique maximal torus in $G/K$. This uniqueness property allows us to construct a well-defined \Ham torus action on $\UU$ using parallel transport. One can show that $G_{\CC}/P_{\involute}$ is a symplectic quotient of the action (Proposition \ref{comparesymplform}).

Next, we complexify this action with respect to the canonical almost complex structure $J_0$ on $T^*(G/K)$. Notice that complex orbits are not the cotangent bundles of maximal tori but open subsets which are roughly $1/|W|$ of them, where $W$ is the Weyl group. Then the almost complex structures we use for the moduli problems will be small perturbations of $J_0$ which preserve all complex orbits (Definition \ref{condJ}). A relative version of unique continuation theorem is needed in order to achieve the transversality for all the pseudoholomorphic curves involved (Lemma \ref{UCLemma}).

Finally, we construct a retraction contracting $\UU$ onto the coisotropic submanifold which defines the symplectic quotient $G_{\CC}/P_{\involute}$. The aforementioned lower bound is then obtained by applying this retraction to every pseudoholomorphic quilt (Lemma \ref{end1}). Moreover, this lower bound is attained by those quilts which are contained in one of the complex orbits. Since these orbits lie in the cotangent bundles of maximal tori, it is not difficult to show that there is precisely one such quilt with fixed input data, by passing to the universal cover and applying a version of the Riemann mapping theorem (Proposition \ref{B+C/2}).

One technical difficulty is to handle those quilts which do not lie inside $\UU$, preventing one from applying the above retraction directly. This is done by using the fact that the complement $T^*(G/K)\setminus\UU$ is a stratified subset of codimension at least two (Lemma \ref{codim2}). Two is a magic number for Floer theorists: every pseudoholomorphic curve can be perturbed to one which intersects a given codimension-two submanifold along a discrete set. This measure-zero set does not cause any trouble when one analyses the energy of the curve.


\section*{Acknowledgements}
We are grateful to Cheol-Hyun Cho, Otto van Koert, Yanki Lekili, Changzheng Li and Weiwei Wu for useful comments and encouragements. Chi Hong Chow would like to thank Cheuk Yu Mak for explaining him, with great patience, the chapter on transversality in McDuff-Salamon's book \cite{MS}. This work was substantially supported by grants from the Research Grants Council of the Hong Kong Special Administrative Region, China (Project No. CUHK14301619 and CUHK14306720), the National Research Foundation of Korea (NRF) grant funded by the Korea government (MSIT) (No. 2020R1A5A1016126) and a direct grant.

\section{Basic Settings: Lie Theory} \label{Lie}
\subsection{Preliminaries} \label{Lie-basic}
The following materials can be found in \cite{twist, Warner}.

Let $G$ be a compact connected Lie group. Fix an Ad-invariant metric $\langle -,-\rangle$ on its Lie algebra $\gg$ so that $\gg\simeq \gg^{\vee}$ as $G$-modules. Throughout the paper, we will denote the (co)adjoint action simply by $g\cdot X$ ($g\in G$, $X\in \gg$ or $\gg^{\vee}$).

Let $\involute$ be an involutive automorphism of $G$. Let $K$ be a closed subgroup lying between the fixed-point subgroup $G^{\involute}$ of $\involute$ and its identity component $G_0^{\involute}$, i.e.
\[G_0^{\involute}\subseteq K\subseteq G^{\involute}.\]
Consider the derivative $D\involute:\gg\ra\gg$ of $\involute$. We have $\gg=\kk\oplus\pp$ where $\kk$ and $\pp$ are the $(+1)$-eigenspace and $(-1)$-eigenspace of $D\involute$ respectively. Notice that $\kk$ is also the Lie algebra of $K$. It is well-known that
\[ [\kk,\kk]\subseteq \kk,\quad [\kk,\pp]\subseteq \pp,\quad [\pp,\pp]\subseteq \kk.  \]
Let $\t\subseteq\pp$ be an abelian subalgebra of $\gg$ which is contained in $\pp$ and is maximal among all subalgebras of the same kind. We call any such subalgebra a \textit{maximal torus} in $\pp$. Since $\ad$ is skew-symmetric with respect to $\langle -,-\rangle$, we have the following root space decomposition
\begin{equation}\label{rootsp0}
\gg=\cent{\t}{\gg}\oplus \bigoplus_{[\a]\in R_{\t}/_{\pm}}\gg_{\t,[\a]}
\end{equation}
where $\cent{\t}{\gg}$ is the centralizer of $\t$ in $\gg$, $R_{\t}$ is a set of non-zero linear forms $\a:\t\ra\RR$, called \textit{restricted roots}, and $\gg_{\t,[\a]}=\bigcap_{X\in\t}\ker(\ad(X)^2+4\pi^2\a(X)^2\id  )$. By definition, $-R_{\t}=R_{\t}$ so that the above direct sum is taken over the set of representatives of the quotient set $R_{\t}/_{\pm}$. Since $\t\subseteq \pp$, $D\involute$ preserves each summand of \eqref{rootsp0}, and hence they further split into $(\pm 1)$-eigenspaces:
\begin{align}
\cent{\t}{\gg} & =\cent{\t}{\kk}\oplus\cent{\t}{\pp} \nonumber \\
\gg_{\t,[\a]} &= \kk_{\t,[\a]}\oplus \pp_{\t,[\a]} \nonumber.
\end{align}
But $\t$ is a maximal torus in $\pp$ so that $\cent{\t}{\pp}=\t$, and hence we have
\begin{align}\label{rootsp1}
\kk &= \cent{\t}{\kk}\oplus \bigoplus_{[\a]\in R_{\t}/_{\pm}} \kk_{\t,[\a]} \nonumber \\
\pp &= \t \oplus \bigoplus_{[\a]\in R_{\t}/_{\pm}} \pp_{\t,[\a]} .
\end{align}

Notice that for any $X\in\t$, $\ad(X)$ maps $\kk_{\t,[\a]}$ into $\pp_{\t,[\a]}$ and vice versa, and it interchanges them isomorphically if $\a(X)\ne 0$. It follows that $\kk_{\t,[\a]}$ and $\pp_{\t,[\a]}$ have the same dimension which we call the \textit{multiplicity} of $\a$ and denote by $\ma$. Moreover, any $X\in\t$ with $\a(X)\ne 0$ for all $\a$ is contained in a unique maximal torus in $\pp$. For our purpose, we usually reverse the order by starting with any point in $\pp$ with the above property. These points are abundant. To see this, we first recall the following well-known
\begin{lemma} \label{Liewk1} $K$ acts transitively on the set of maximal tori in $\pp$, and $\pp=\bigcup_{k\in K}k\cdot\t$. \hfill$\square$
\end{lemma}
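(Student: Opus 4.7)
The plan is to prove the second statement---existence of a $K$-conjugate into $\t$ for every element of $\pp$---first, and then deduce transitivity on maximal tori as a consequence.

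Fix a \emph{regular} element $H \in \t$, meaning $\a(H)\ne 0$ for every restricted root $\a\in R_{\t}$; such $H$ exist since the singular locus is a finite union of hyperplanes in $\t$. Given $X\in\pp$, I would consider the continuous function $f:K\to\RR$, $f(k):=\langle k\cdot X, H\rangle$. By compactness of $K$, $f$ attains a maximum at some $k_0\in K$, and differentiating along $\exp(tY)$ for $Y\in\kk$ gives, using Ad-invariance of $\langle-,-\rangle$,
\[ 0=\left.\frac{d}{dt}\right|_{t=0}\langle\exp(tY)k_0\cdot X, H\rangle=\langle[Y,k_0\cdot X],H\rangle=\langle Y,[k_0\cdot X,H]\rangle. \]
Since this vanishes for every $Y\in\kk$, the element $[k_0\cdot X,H]$ is orthogonal to $\kk$. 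But $k_0\cdot X$ and $H$ both lie in $\pp$ (as $K$ preserves $\pp$), so $[k_0\cdot X,H]\in[\pp,\pp]\subseteq\kk$; hence $[k_0\cdot X,H]=0$. Regularity of $H$ now forces $k_0\cdot X\in \cent{\t}{\pp}=\t$, yielding $\pp=\bigcup_{k\in K}k\cdot\t$.

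For transitivity, let $\t'$ be another maximal torus in $\pp$, and pick $X\in\t'$ whose centralizer in $\pp$ equals $\t'$ (such $X$ form a dense open subset of $\t'$, being the complement of finitely many hyperplanes cut out by the restricted roots associated to $\t'$). The first part produces $k\in K$ with $k\cdot X\in\t$. Applying $\Ad(k)$, the centralizer of $k\cdot X$ in $\pp$ is exactly $k\cdot\t'$. This centralizer also contains $\t$ since $\t$ is abelian and already contains $k\cdot X$. Therefore $k\cdot\t'\supseteq\t$, and maximality of $\t$ as an abelian subalgebra of $\pp$ forces $k\cdot\t'=\t$.

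The one slightly subtle step is the Ad-invariance manipulation $\langle[Y,k_0\cdot X],H\rangle=\langle Y,[k_0\cdot X,H]\rangle$, which is a direct consequence of $\ad$ being skew with respect to $\langle-,-\rangle$. Every other step is bookkeeping on top of the bracket relations and the characterization of regular centralizers already recorded in the text; no analytic input is required beyond compactness of $K$ and the resulting existence of a maximizer for $f$.
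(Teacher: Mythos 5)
Your proof is correct: the maximization of $k\mapsto\langle k\cdot X,H\rangle$ over compact $K$ with $H$ regular, the vanishing of $[k_0\cdot X,H]$ (which lies in $[\pp,\pp]\subseteq\kk$ while being orthogonal to $\kk$), and the centralizer argument $\cent{H}{\pp}=\cent{\t}{\pp}=\t$ all check out, as does the deduction of transitivity via a regular element of $\t'$. The paper gives no proof at all---it records the lemma as standard material (with references)---and your argument is exactly the classical one those sources supply, so there is nothing to reconcile.
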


\begin{definition} \label{goodopen} Define $U$ to be the set of $X\in \pp$ which lies in a unique maximal torus $\t_X$ in $\pp$.
\end{definition}

\begin{lemma} \label{Liewk1.5} The subset $U$ is open in $\pp$ and is invariant under the $K$-action. Its complement $\pp\setminus U$ is a stratified subset with finitely many strata each of which has codimension at least two.
\end{lemma}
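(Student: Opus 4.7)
The plan is to handle the three claims in succession. For $K$-invariance: if $X\in U$ lies in the unique maximal torus $\t_X\subset\pp$, then $k\cdot X\in k\cdot\t_X$, and any second maximal torus $\t'\ni k\cdot X$ would yield a second maximal torus $k^{-1}\cdot\t'\ne\t_X$ through $X$, a contradiction.

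For openness I would re-express $U$ intrinsically as $\{X\in\pp:\dim\cent{X}{\pp}=\dim\t\}$ and then invoke upper semicontinuity. To get the re-expression, pick any maximal torus $\t'\ni X$ via Lemma \ref{Liewk1}; from \eqref{rootsp1} and the fact that $\ad(X)$ acts skew-adjointly on $\pp_{\t',[\a]}$ with square $-4\pi^2\a(X)^2\id$, one obtains
\[
\cent{X}{\pp}=\t'\oplus\bigoplus_{[\a]\in R_{\t'}/_{\pm},\,\a(X)=0}\pp_{\t',[\a]}.
\]
Because $[\t',\pp_{\t',[\a]}]\ne 0$ for every restricted root, this centralizer is abelian precisely when no summand appears; in that case $\t'=\cent{X}{\pp}$ is forced to be the unique maximal torus through $X$. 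Conversely, if some $\a(X)=0$, extending $X$ and a nonzero vector of $\pp_{\t',[\a]}$ to a maximal abelian subspace of $\pp$ produces a second maximal torus through $X$. Thus $X\in U$ iff $\ad(X)|_\pp$ has kernel of minimal dimension $\dim\t$, which is an open condition.

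For the codimension statement, I would stratify $\pp\setminus U$ by the subset of vanishing restricted roots. Fix a reference maximal torus $\t$ and, for each nonempty $A\subseteq R_{\t}/_{\pm}$, set $\t_A^\circ=\{X\in\t:\a(X)=0\iff[\a]\in A\}$. By Lemma \ref{Liewk1} every element of $\pp\setminus U$ is $K$-conjugate into some $\t_A^\circ$, so $\pp\setminus U=\bigcup_A K\cdot\t_A^\circ$ is a finite union. For $X\in\t_A^\circ$ the tangent space to $K\cdot\t_A^\circ$ at $X$ is the image of the derivative of $(k,Y)\mapsto k\cdot Y$ at $(e,X)$, namely $\ad(X)\kk+T_X\t_A^\circ$. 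By the root-space analysis above, $\ad(X)\kk=\bigoplus_{[\b]\notin A}\pp_{\t,[\b]}$ and $T_X\t_A^\circ=\bigcap_{[\a]\in A}\ker\a\subset\t$; these lie in complementary summands of $\pp=\t\oplus\bigoplus_{[\b]}\pp_{\t,[\b]}$, so writing $r_A=\dim\mathrm{span}_{\RR}\{\a:[\a]\in A\}\ge 1$ and $M_A=\sum_{[\a]\in A}\ma\ge 1$, the codimension in $\pp$ is $r_A+M_A\ge 2$.

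The main technical obstacle will be the tangent-space calculation just sketched: one must carefully use \eqref{rootsp1} and the skew-adjointness of $\ad(X)$ to verify both that $\ad(X)$ annihilates $\cent{\t}{\kk}\oplus\bigoplus_{[\a]\in A}\kk_{\t,[\a]}$ and restricts to an isomorphism $\kk_{\t,[\b]}\to\pp_{\t,[\b]}$ for $[\b]\notin A$, and then that the resulting sum $\ad(X)\kk+T_X\t_A^\circ$ is direct. The remaining items — finiteness of the stratification (after quotienting by the natural action of $\norm{\t}{K}/\cent{\t}{K}$ on $\t$) and $K$-invariance — are Lie-theoretic book-keeping.
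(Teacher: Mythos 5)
Your proposal is correct. For the openness of $U$ you take a genuinely different route: you characterize $U$ as the locus where $\dim\cent{X}{\pp}$ attains its minimal value $\dim\t$ and invoke upper semicontinuity of $\dim\ker(\ad(X)|_{\pp})$, whereas the paper deduces openness from the surjectivity statement $\pp=\t\oplus[\kk,\t]$ (an implicit-function-theorem style argument showing $K\cdot(\t\cap U)$ fills out a neighbourhood). Your version has the advantage of giving an intrinsic, torus-free description of $U$, at the cost of having to verify both directions of the equivalence (your construction of a second maximal torus from a vanishing root, using that $\ad(X)$ is skew and hence vanishes on $\gg_{\t',[\a]}$ when $\a(X)=0$, is exactly the needed converse). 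For the codimension bound the two arguments share the same key computation — that $\ad(X)$ kills $\kk_{\t,[\a]}$ for each vanishing root $\a$, so $[\kk,X]$ drops by at least $\ma\geq 1$ below $[\kk,\t]=\bigoplus_{[\b]}\pp_{\t,[\b]}$ — but you refine the paper's two-piece count (codimension one of $S=\bigcup_\a\ker(\a)$ in $\t$, plus at least one from the rank drop) into an explicit stratification by the set $A$ of vanishing roots, with each stratum $K\cdot\t_A^\circ$ of codimension $r_A+M_A\geq 2$. This is slightly more work but yields the stratification asserted in the statement more directly, since the map $K\times\t_A^\circ\to\pp$ has constant rank on each stratum. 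Both routes are sound.
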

\begin{proof} It is clear that $U$ is $K$-invariant. The openness of $U$ follows from the equality
\[\pp=\t\oplus [\kk,\t] \]
which is an immediate consequence of \eqref{rootsp1}. To show the last part, we fix a maximal torus $\t$ in $\pp$. By Lemma \ref{Liewk1}, $\pp\setminus U$ is equal to $K\cdot S$ where $S:=\bigcup_{\a\in R_{\t}}\ker(\a)$. Since $S$ has codimension one in $\t$, it suffices to show that for any $X\in S$, $[\kk,X]\subsetneq [\kk,\t]$. This follows from \eqref{rootsp1} and the observation that $\ad(X)$ vanishes on $\kk_{\t,[\a]}$ for at least one $\a$.
\end{proof}

\begin{remark}\label{Lie-remark} The codimension of $\pp\setminus U$ is equal to two precisely when there exists a restricted root $\a$ such that $\ma =1$ and $\frac{1}{2}\a$ is not a restricted root.
\end{remark}

Let $X\in U$. Then it determines a set $\Ra{X}$ of representatives of the quotient $R_{\t_X}/_{\pm}$ defined by
\[\Ra{X}:=\{\a\in R_{\t_X}|~\a(X)>0\}.  \]
From now on, we use $\gg_{X,\a}$ (resp. $\ka{X}$ resp. $\pa{X}$) in place of $\gg_{\t_X,[\a]}$ (resp. $\kk_{\t_X,[\a]}$ resp. $\pp_{\t_X,[\a]}$) so that \eqref{rootsp1} becomes
\begin{align} \label{rootsp}
\kk &= \cent{\t_X}{\kk}\oplus \bigoplus_{\a\in \Ra{X}} \ka{X} \nonumber \\
\pp &= \t_X \oplus \bigoplus_{\a\in \Ra{X}} \pa{X}.
\end{align}
We will need two more notions: the \textit{Weyl group} $W_X$, the subgroup of $\Aut(\t_X)$ generated by the reflections across the walls $\ker(\a)$, $a\in \Ra{X}$, and the \textit{Weyl chamber} $\wc_X$, the connected component of $\t\setminus \bigcup_{\a\in \Ra{X}}\ker(\a)$ which contains $X$. It is well-known that $W_X$ acts simply transitively on the set of Weyl chambers in $\t_X$. The following lemma is well-known and will be useful later.
\begin{lemma} \label{Liewk2} Let $X\in U$.
\begin{enumerate}
\item The action on $\t_X$ by the normalizer $\norm{\t_X}{K}$ of $\t_X$ in $K$ induces an isomorphism
\[\norm{\t_X}{K}/\cent{\t_X}{K}\simeq W_X.  \]
\item We have
\[ \cent{X}{G}=\exp(\t_X)\cdot \cent{X}{K} \]
where $\exp:\gg\ra G$ is the exponential map.  \hfill$\square$
\end{enumerate}
\end{lemma}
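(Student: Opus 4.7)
My plan for part (1) is to study the natural homomorphism
\[ \Phi: \norm{\t_X}{K} \to \Aut(\t_X), \quad k \mapsto \Ad(k)|_{\t_X}, \]
whose kernel is precisely $\cent{\t_X}{K}$ by definition, so the lemma reduces to showing $\im \Phi = W_X$. Since $\Ad(k)$ for $k \in \norm{\t_X}{K}$ permutes the summands of the root space decomposition \eqref{rootsp}, $\im \Phi$ lies in the automorphisms of $\t_X$ preserving $R_{\t_X}$. To exhibit $W_X$ inside $\im \Phi$, I would realize each reflection $s_\a$ ($\a \in \Ra{X}$) by an explicit element: inside the $\involute$-invariant rank-one Lie subalgebra $[\ka{X}, \pa{X}] \oplus \ka{X} \oplus \pa{X}$, one locates an $\mathfrak{su}(2)$- (or $\mathfrak{so}(3)$-)triple, and a suitably normalized $E \in \ka{X}$ then produces $\exp(E) \in K$ which normalizes $\t_X$ and acts as $s_\a$. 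Since the $s_\a$ generate $W_X$, this yields surjectivity.

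For part (2), the containment $\exp(\t_X) \cdot \cent{X}{K} \subseteq \cent{X}{G}$ is immediate from $X \in \t_X$ and the definition of $\cent{X}{K}$. For the reverse, I would first observe that the regularity $X \in U$ collapses the centralizer decomposition: since $\ad(X)$ is invertible on each $\ka{X} \oplus \pa{X}$ (as $\a(X) \ne 0$), \eqref{rootsp} gives
\[ \cent{X}{\gg} = \t_X \oplus \cent{\t_X}{\kk}, \]
a direct sum of commuting pieces, and likewise $\cent{X}{K} = \cent{\t_X}{K}$. Because $D\involute(X) = -X$, the subgroup $\cent{X}{G}$ is $\involute$-stable, and $(\cent{X}{G}, \cent{X}{K})$ inherits the structure of a compact symmetric pair whose $\pp$-part is $\t_X$ itself --- already abelian, hence a maximal torus there. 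Applying the compact Cartan decomposition $G' = K' \exp(\t') K'$ (Helgason) to this sub-pair yields $\cent{X}{G} = \cent{X}{K} \cdot \exp(\t_X) \cdot \cent{X}{K}$, which collapses to $\exp(\t_X) \cdot \cent{X}{K}$ because $\exp(\t_X)$ commutes with $\cent{X}{K} = \cent{\t_X}{K}$.

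The main technical hurdle will be the explicit construction of the reflection-realizing elements in part (1): the normalization of $E \in \ka{X}$ so that $\exp(E)$ acts on $\t_X$ as $s_\a$ requires a careful computation in the rank-one subalgebra, and the possibility of multiplicity $\ma > 1$ (or even $\tfrac{1}{2}\a \in R_{\t_X}$) means one must work inside a semisimple subalgebra which is not a priori an $\mathfrak{su}(2)$. Part (2) is more formal once the sub-symmetric-pair picture is set up, modulo checking that disconnected components of $\cent{X}{G}$ cause no difficulty when invoking the compact Cartan decomposition.
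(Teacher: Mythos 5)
Your plan for part (1) has a genuine gap in the containment $\im\Phi\subseteq W_X$, which is in fact the delicate direction here (the paper states the lemma without proof but devotes the remark immediately after it to exactly this point). Observing that $\Ad(k)$ permutes the root spaces only places $\im\Phi$ inside the full automorphism group of the restricted root system, which is $W_X\rtimes(\text{diagram automorphisms})$ and is strictly larger than $W_X$ for many types; nothing in your argument rules out an element of $\norm{\t_X}{K}$ inducing an outer automorphism. This cannot be waved away, because $K$ is only assumed to lie between $G_0^{\involute}$ and $G^{\involute}$, so one cannot even fall back on a connectedness argument for $K$. The route indicated in the paper's remark is: after conjugating by $\cent{\t_X}{K}$, a normalizing element $k$ also normalizes a maximal torus $\t'=\t\oplus\t_X$ of $\gg$, hence induces an element of $W(\t',\gg)$ commuting with $D\involute$, and such elements are known to restrict to $W_X$ on $\t_X$. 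Your surjectivity argument via rank-one subalgebras $[\ka{X},\pa{X}]\oplus\ka{X}\oplus\pa{X}$ is fine and standard, but it is not where the difficulty lies. Note also that your parenthetical claim $\cent{X}{K}=\cent{\t_X}{K}$ in part (2) secretly uses the missing containment: an automorphism of the root system fixing the regular point $X$ need not be the identity unless you already know it lies in $W_X$.

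For part (2), the issue you flag at the end is real but fillable, and you should fill it rather than defer it: $\cent{X}{G}$ is the centralizer of the torus $\ol{\exp(\RR X)}$ in the connected compact group $G$, hence is \emph{connected}. Granting this, the Riemannian exponential of the (connected) symmetric space $\cent{X}{G}/(\cent{X}{G}^{\involute})_0$ is surjective, giving the one-sided decomposition $\cent{X}{G}=\exp(\t_X)\cdot(\cent{X}{G}^{\involute})_0$, and $(\cent{X}{G}^{\involute})_0\subseteq G_0^{\involute}\subseteq K$ forces the second factor into $\cent{X}{K}$; this avoids both the two-sided decomposition $K'\exp(\t')K'$ (whose collapse needs the commutativity you derive from the problematic identity $\cent{X}{K}=\cent{\t_X}{K}$) and any worry about components. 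Without the connectedness of $\cent{X}{G}$ your argument genuinely fails, since the Cartan decomposition does not hold for disconnected groups (e.g.\ a finite cyclic group with the inversion involution).
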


\begin{remark} Lemma \ref{Liewk2}(1) does not require $K$ to be connected. The key point is that if $k\in K$ normalizes $\t_X$, then it normalizes $K_{\t_X}:=\cent{\t_X}{K}$. Conjugating $k$ with an element of $K_{\t_X}$ if necessary, $k$ normalizes a maximal torus $\t$ in $\Lie(K_{\t_X})$. It follows that it normalizes the maximal torus $\t':=\t\oplus\t_X$ in $\gg$, and hence induces a Weyl group element of $W(\t',\gg)$ which commutes with $D\involute$. It is well-known that such an element restricts to an element of $W_X$.
\end{remark}
\subsection{Coadjoint orbits and real flag manifolds} \label{Lie-flag}
We continue to use the notations defined in the previous subsection. Fix $X_0\in U$.
\begin{definition} Define
\begin{align*}
\CO&:= G\cdot X_0 \subset \gg^{\vee}\\
\RF&:= K\cdot X_0 \subset \pp.
\end{align*}
They are a coadjoint orbit and a real flag manifold respectively.
\end{definition}

\begin{example} Let $H$ be a compact connected Lie group. Define $\involute$ to be the involution on $G:=H\times H$ which interchanges the two components. Then $\kk$ (resp. $\pp$) is equal to the (resp. anti-) diagonal of $\gg=\mathfrak{h}\oplus \mathfrak{h}$. One sees easily that $\CO=H/T\times H/T$ and $\RF$ is equal to the diagonal of $\CO$, where $T$ is a maximal torus in $H$.
\end{example}

There is a well-known symplectic form $\w_{\CO}$ on $\CO$ defined by
\begin{equation}\nonumber
\w_{\CO}([\eta,X],[\zeta,X]):= -\langle [\eta,\zeta] , X \rangle
\end{equation}
for any $X\in\CO$ and $\eta,\zeta\in \gg$.

\begin{lemma}\label{orientable} $\RF$ is a connected, orientable Lagrangian of $(\CO,\w_{\CO})$.
\end{lemma}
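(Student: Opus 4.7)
The proof naturally falls into four parts: isotropy, the Lagrangian dimension equality, connectedness, and orientability.

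\emph{Isotropy and dimension.} I would first arrange, by averaging over $\{\id,\involute\}$ if necessary, that the Ad-invariant metric $\langle-,-\rangle$ is also $\involute$-invariant, so that $\kk\perp\pp$. Then for $\eta,\zeta\in\kk$ the bracket $[\eta,\zeta]\in[\kk,\kk]\subseteq\kk$ is orthogonal to $X_0\in\pp$, giving
\[\w_{\CO}([\eta,X_0],[\zeta,X_0])=-\langle[\eta,\zeta],X_0\rangle=0.\]
Since $X_0\in U$ forces $\a(X_0)\neq 0$ for every $\a\in\Ra{X_0}$, the operator $\ad(X_0)$ is invertible on each $\ka{X_0}\oplus\pa{X_0}$ (via the defining identity $\ad(X_0)^2=-4\pi^2\a(X_0)^2\id$ on that block). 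Combining this with \eqref{rootsp} yields $\cent{X_0}{\gg}=\cent{\t_{X_0}}{\kk}\oplus\t_{X_0}$ and $\cent{X_0}{\kk}=\cent{\t_{X_0}}{\kk}$, whence $\dim\CO=2\sum_{\a\in\Ra{X_0}}m_{\a}=2\dim\RF$.

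\emph{Connectedness.} I plan to show $\RF=K^0\cdot X_0$. By the Cartan subspace conjugacy theorem applied to the compact connected symmetric pair $(G,K^0)$, the identity component $K^0$ already acts transitively on the maximal tori of $\pp$, sharpening Lemma \ref{Liewk1}. Given $k\in K$, pick $k_0\in K^0$ with $k_0\cdot\t_{X_0}=k\cdot\t_{X_0}$; then $k_0^{-1}k\in \norm{\t_{X_0}}{K}$ induces some $w\in W_{X_0}$ by Lemma \ref{Liewk2}(1). Each generating reflection $s_{\a}$ of $W_{X_0}$ is implemented by an element of the compact subgroup of $K^0$ built from $\ka{X_0}\subseteq\kk$, so $w$ lifts to $n\in \norm{\t_{X_0}}{K^0}$ with $n\cdot X_0=k_0^{-1}k\cdot X_0$, and consequently $k\cdot X_0=(k_0 n)\cdot X_0\in K^0\cdot X_0$.

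\emph{Orientability and main obstacle.} With $\RF\simeq K^0/M_0$ for $M_0:=\cent{\t_{X_0}}{K^0}$, it suffices to show $\det(\Ad(m)|_{T_{X_0}\RF})=+1$ for every $m\in M_0$. Since $K^0$ is compact and connected, $\Ad(K^0)\subseteq O(\gg)$ is a connected subgroup containing the identity and hence lies in $SO(\gg)$; the same applies to its restriction to the $\Ad(K^0)$-invariant subspace $\pp$, so $\det\Ad(m)|_\pp=1$. Because $m\in M_0$ acts trivially on $\t_{X_0}$ and preserves the orthogonal splitting $\pp=\t_{X_0}\oplus T_{X_0}\RF$, the determinant on $T_{X_0}\RF$ is also $+1$. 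The subtlest part is the connectedness step when $K$ is disconnected: one needs both the transitivity of $K^0$ on the maximal tori of $\pp$ and the fact that the full Weyl group $W_{X_0}$ is realized inside $\norm{\t_{X_0}}{K^0}$ (not merely in $\norm{\t_{X_0}}{K}$). Once these are in place, orientability follows automatically from $\Ad(K^0)\subseteq SO(\gg)$.
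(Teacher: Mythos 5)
Your proof is correct, but for two of the three assertions it takes a genuinely different route from the paper. For the Lagrangian property the paper does not compute at all: it observes that $-D\involute$ restricts to an anti-symplectic involution of $(\CO,\w_{\CO})$ whose fixed-point locus has $\RF$ as a connected component, so $\RF$ is automatically Lagrangian; you instead check isotropy directly from $[\kk,\kk]\subseteq\kk\perp\pp$ and get half-dimensionality from the regularity of $X_0$ (invertibility of $\ad(X_0)$ on each $\ka{X_0}\oplus\pa{X_0}$), giving $\dim\CO=2\sum_{\a\in\Ra{X_0}}\ma=2\dim\RF$. Note that both arguments secretly use the same hypothesis that the metric is $D\involute$-invariant (equivalently $\kk\perp\pp$): the paper needs it for $-D\involute$ to be anti-symplectic and already for the orthogonal projections $\pr_{\kk},\pr_{\pp}$ to make sense, so rather than averaging the metric — which would change $\w_{\CO}$, since the identification $\gg\simeq\gg^{\vee}$ changes — you should simply record that this invariance is implicit in the paper's setup. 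For connectedness the two proofs rest on the same classical input, namely the $K_0$-conjugacy of maximal tori in $\pp$ together with the fact that every reflection of $W_{X_0}$ is realized in $\norm{\t_{X_0}}{K_0}$; the paper compresses this into ``$K\cdot X_0$ and $K_0\cdot X_0$ have the same intersection with $\t_{X_0}$''. For orientability the routes diverge again: the paper shows the normal bundle of $\RF$ in $\pp$ is trivial, hence the tangent bundle is stably trivial — a stronger output, which is what the footnote in Appendix \ref{sign} invokes to say $\RF$ is $Spin$ (though ultimately not used) — whereas you use the homogeneous-space criterion $\RF\simeq K_0/\cent{\t_{X_0}}{K_0}$ and the observation that $\Ad(K_0)|_{\pp}\subseteq SO(\pp)$ by connectedness, so the isotropy representation on $T_{X_0}\RF$ has determinant $+1$. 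Your argument is shorter and self-contained but yields only orientability; you should also insert the one-line verification (regularity of $X_0$ plus Lemma \ref{Liewk2}(1)) that the stabilizer of $X_0$ in $K_0$ is exactly $\cent{\t_{X_0}}{K_0}$, which the identification $\RF\simeq K_0/M_0$ tacitly uses.
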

\begin{proof}
The connectedness follows from Lemma \ref{Liewk2}(1): $K\cdot X_0$ and $K_0\cdot X_0$ have the same intersection with $\t_{X_0}$. To prove that $\RF$ is a Lagrangian, notice that $-D\involute:\gg\ra\gg$ restricts to an anti-symplectic involution on $\CO$ such that $\RF$ is a connected component of its fixed-point locus. To show that $\RF$ is orientable, it suffices to show that its normal bundle in $\pp$ is orientable. This bundle is in fact trivial, and the proof uses Lemma \ref{Liewk2}(1) and the fact that $W_{X_0}$ acts simply transitively on the set of Weyl chambers. See \cite{BSZ} for another proof of the orientability.
\end{proof}

\subsection{Symmetric spaces} \label{Lie-sym}
Let $G$ and $K$ be as before. We consider the quotient $G/K$, called a compact symmetric space. The $K$-invariant metric on $\pp$ induces a Riemannian metric on $G/K$ whose associated Levi-Civita connection $\nabla$ is described as follows. Consider the left-invariant Maurer-Cartan form $\b$ on $G$, i.e. the unique left-invariant $\gg$-valued 1-form on $G$ which is equal to the identity at $e\in G$. It satisfies the Maurer-Cartan equation
\begin{equation}\label{MC}
d\b=-\frac{1}{2}[\b,\b].
\end{equation}
Denote by $\pr_{\kk}:\gg\ra\kk$ and $\pr_{\pp}:\gg\ra\pp$ the orthogonal projections. Then $\pr_{\kk}\circ\b$ is a connection 1-form on the principal $K$-bundle $G\ra G/K$. It is proved in \cite{twist} that the associated connection on $T(G/K)=G\times_K\pp$ preserves the metric and is torsion-free, and hence it coincides with $\nabla$. Using this fact, it is not hard to deduce the following properties concerning $\nabla$.
\begin{lemma} \label{symwk1}$~$
\begin{enumerate}
\item Consider the canonical splitting $T(T(G/K))=\Hor\oplus \Ver$ induced by $\nabla$, where $\Hor$ and $\Ver$ are the horizontal and vertical subbundles of $T(T(G/K))$ respectively. At any point $[g:Y]\in T(G/K)=G\times_K\pp$,
\[ \Hor=\{ \pi_*(g_*\eta,0)|~\eta\in\pp \}\]
where $\pi:G\times \pp\ra G\times_K\pp$ is the quotient map.
\item Put $p_0:=eK$. Identify $T_{p_0}G/K$ with $\pp$. The Riemannian curvature tensor $R_{p_0}$ at $p_0$ satisfies
\[ R_{p_0}(X,Y)Z=-[[X,Y],Z]\]
for any $X,Y,Z\in\pp$. \hfill$\square$
\end{enumerate}
\end{lemma}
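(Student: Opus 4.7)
My plan is to derive both statements directly from the characterization of $\nabla$ given in the paragraph preceding the lemma, namely that $\nabla$ is the connection induced on the associated vector bundle $G\times_K\pp$ by the principal $K$-connection $\omega:=\pr_{\kk}\circ\b$ on $G\to G/K$. The key ingredients are the Maurer-Cartan equation \eqref{MC} together with the bracket inclusions $[\kk,\kk]\subseteq\kk$, $[\kk,\pp]\subseteq\pp$, $[\pp,\pp]\subseteq\kk$ used in Section~\ref{Lie-basic}.

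For part (1), I first compute the horizontal subspace at $g\in G$ in the principal bundle. Since $\b$ is left-invariant with $\b_e=\id_{\gg}$, one has $\b_g(g_*\eta)=\eta$ for every $\eta\in\gg$, so $\ker\omega_g=g_*\pp$. The horizontal subbundle of the associated bundle $T(G/K)=G\times_K\pp$ at $[g:Y]$ is then, by definition of the induced connection, the $\pi_*$-image of pairs $(v,0)\in T_gG\oplus T_Y\pp$ whose $G$-component $v$ is horizontal in $G$. This yields $\Hor_{[g:Y]}=\{\pi_*(g_*\eta,0)\mid\eta\in\pp\}$ immediately.

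For part (2), I compute the curvature $\Omega:=d\omega+\tfrac{1}{2}[\omega,\omega]$ of the principal connection at $e$. Splitting $\b=\pr_{\kk}\b+\pr_{\pp}\b$ and applying \eqref{MC}, the bracket inclusions force
\[ \pr_{\kk}[\b,\b]=[\pr_{\kk}\b,\pr_{\kk}\b]+[\pr_{\pp}\b,\pr_{\pp}\b], \]
so that $\Omega=-\tfrac{1}{2}[\pr_{\pp}\b,\pr_{\pp}\b]$. Evaluated on horizontal tangent vectors at $e$ identified with $X,Y\in\pp$ via $\b_e$, this gives $\Omega_e(X,Y)=-[X,Y]\in\kk$. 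Under the standard correspondence between the curvature of a principal connection and the induced connection on an associated bundle with representation $\kk\to\mathrm{End}(\pp)$, $A\mapsto\ad(A)$, the Riemannian curvature at $p_0$ is therefore
\[ R_{p_0}(X,Y)Z=\ad(\Omega_e(X,Y))Z=[-[X,Y],Z]=-[[X,Y],Z], \]
as claimed.

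No substantive obstacle arises; the argument is essentially bookkeeping with the Maurer-Cartan equation. The only point demanding care is the sign-and-factor convention for $[\b,\b]$ as a $\gg$-valued 2-form, so that the factor $2$ appearing in $[\pr_{\pp}\b,\pr_{\pp}\b]_e(X,Y)=2[X,Y]$ correctly cancels the $\tfrac{1}{2}$ in the definition of $\Omega$ and produces the sign in the final formula. A secondary bit of care is the identification of horizontal vectors at $e$ with $\pp$, which is consistent with the description of $\Hor$ in part (1) and thereby ensures the two parts of the lemma use compatible conventions.
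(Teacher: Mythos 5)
Your proposal is correct and follows exactly the route the paper intends: the paper states the lemma without proof, remarking only that it is easy to deduce from the identification (cited from \cite{twist}) of $\nabla$ with the connection on $G\times_K\pp$ induced by the principal connection $\pr_{\kk}\circ\b$, and your argument simply writes out that deduction (horizontal lifts via $\ker(\pr_{\kk}\circ\b)_g=g_*\pp$ for (1), and the curvature computation $\Omega=-\tfrac{1}{2}[\pr_{\pp}\b,\pr_{\pp}\b]$ via the Maurer--Cartan equation and the bracket relations for (2)). The convention checks you flag (the factor $2$ in $[\a,\a]$ and the identification of horizontal vectors at $e$ with $\pp$) are indeed the only delicate points, and you handle them correctly.
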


We will make use of a certain class of submanifolds of $G/K$. Recall we have fixed $X_0\in\pp$ in the previous subsection. Put $\t_0:=\t_{X_0}$ and $T_0:=\exp(\t_0)K$. Then $T_0$ is a torus embedded in $G/K$ and $\t_0$ is its universal cover. Define
\[ Q^{\vee} :=\{ q\in \t_0|~ \exp(q)\in K\}\]
to be the unit lattice of the universal covering $\t_0\ra T_0$. It is clear that $Q^{\vee}=\frac{1}{2}\exp^{-1}(e)\cap \t_0$.

\begin{lemma} \label{flattorus}  The torus $T_0$ is a totally geodesic submanifold of $G/K$ with flat tangential and normal curvatures.
\end{lemma}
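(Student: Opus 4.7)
The plan is to exploit two features of $\t_0$: its abelianness as a subalgebra of $\gg$, and the fact that the connected abelian subgroup $\exp(\t_0)\subseteq G$ acts by isometries on $G/K$ while preserving $T_0$ setwise, hence transitively. Combined with the pointwise curvature formula of Lemma \ref{symwk1}(2), these two inputs will yield all three assertions with essentially no analysis.

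First I would establish the totally geodesic property at the basepoint $p_0=eK$. The geodesics of $G/K$ through $p_0$ are precisely $t\mapsto\exp(tY)K$ for $Y\in\pp$, so every geodesic with initial velocity in $T_{p_0}T_0=\t_0$ lies in $T_0$. For a general point $p=\exp(q)K\in T_0$ with $q\in Q^{\vee}$-translated preimage in $\t_0$, left translation $L_{\exp(q)}$ is an isometry of $G/K$; by $[\t_0,\t_0]=0$ we have $L_{\exp(q)}\bigl(\exp(q')K\bigr)=\exp(q+q')K\in T_0$, so $L_{\exp(q)}$ preserves $T_0$ setwise and carries $p_0$ to $p$. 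Hence it maps geodesics in $T_0$ through $p_0$ to geodesics in $T_0$ through $p$, proving total geodesicity.

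For the flat tangential curvature, the Gauss equation reduces the intrinsic curvature $R^{T_0}$ to the restriction of the ambient $R$, since the second fundamental form vanishes. By homogeneity under $\exp(\t_0)$ it suffices to compute at $p_0$, where Lemma \ref{symwk1}(2) gives
\[ R_{p_0}(X,Y)Z=-[[X,Y],Z]=0\quad\text{for all }X,Y,Z\in\t_0, \]
because $\t_0$ is abelian. Equivalently, the covering $\t_0\to T_0$, $q\mapsto\exp(q)K$, pulls back the induced metric to the translation-invariant Euclidean metric on $\t_0$, which is manifestly flat. For the flat normal curvature, the Ricci equation likewise loses its Weingarten terms because $T_0$ is totally geodesic, so $R^{\perp}(X,Y)Z=\pr_{NT_0}R(X,Y)Z$ for $X,Y\in TT_0$ and $Z\in NT_0$. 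Reducing to $p_0$ via the same homogeneity, the normal space is $\bigoplus_{\a\in \Ra{X_0}}\pa{X_0}$, and again $R_{p_0}(X,Y)Z=-[[X,Y],Z]$ vanishes whenever $X,Y\in\t_0$ since $[X,Y]=0$.

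I do not anticipate a serious obstacle; the entire argument is driven by abelianness of $\t_0$ together with the explicit curvature formula. The only point requiring a brief verification is that $L_{\exp(q)}$ genuinely preserves $T_0$ setwise, which in turn uses the commutativity $\exp(q)\exp(q')=\exp(q+q')$ valid on $\t_0$.
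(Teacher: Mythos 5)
Your proof is correct, and the flatness part coincides with what the paper intends: reduce to $p_0$ by homogeneity and apply Lemma \ref{symwk1}(2), where $[\t_0,\t_0]=0$ kills both the tangential and the normal curvature. Where you diverge is the first assertion. The paper disposes of total geodesy in one line by observing that the geodesic symmetry $gK\mapsto \involute(g)K$ (which sends $\exp(q)K$ to $\exp(-q)K$) preserves $T_0$, and likewise the symmetries at all other points of $T_0$ obtained by conjugating with translations; total geodesy then follows from the classical fact about connected complete submanifolds of a symmetric space invariant under the symmetries at each of their points (equivalently, from $\t_0$ being a Lie triple system). You instead use the explicit description of the geodesics through the basepoint as the orbits $t\mapsto\exp(tY)K$, $Y\in\pp$, so that geodesics with initial velocity in $\t_0$ visibly stay in $T_0=\exp(\t_0)K$, and then propagate this to every point of $T_0$ via the isometries $L_{\exp(q)}$, which preserve $T_0$ by commutativity of $\t_0$. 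Your route is more explicit and self-contained modulo the standard fact (from the cited background on symmetric spaces) that one-parameter orbits $\exp(tY)K$ exhaust the geodesics through $eK$; the paper's route avoids that input but instead leans on the symmetry-invariance criterion. Both are legitimate, and your closing remark correctly isolates the only point needing verification, namely that $L_{\exp(q)}$ preserves $T_0$, which is exactly where abelianness of $\t_0$ enters.
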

\begin{proof}
The first assertion follows from the observation that the geodesic symmetry $gK\mapsto \involute(g)K$ preserves $T_0$. The rest follows from the first assertion and Lemma \ref{symwk1}(2).
\end{proof}

\begin{definition} We call any submanifold $g\cdot T_0$, $g\in G$. a \textit{maximal torus} in $G/K$.
\end{definition}

Now, we consider the cotangent bundle $M:=T^*(G/K)$. It is an exact symplectic manifold with symplectic form $\w_M=d\l_M$ where $\l_M:=\mb{p}d\mb{q}$ is the standard Liouville form. We identify $M$ with $T(G/K)$ using the metric so that we have the corresponding splitting $TM=\Hor\oplus\Ver$.

Recall $U$ which was defined in Section \ref{Lie-basic}. Since it is $K$-invariant (Lemma \ref{Liewk1.5}), it makes sense to define $\UU:=G\times_K U\subset G\times_K\pp^{\vee}=M$. Lemma \ref{Liewk1.5} implies the following
\begin{lemma} \label{codim2} The subset $\UU$ is open in $M$ and is invariant under the $G$-action. Let $L$ denote the cotangent fiber at any point in $G/K$. The complements $M\setminus \UU$ and $L\setminus \UU$ are stratified subsets with finitely many strata each of which has codimension at least two (in $M$ and $L$ respectively). \hfill$\square$
\end{lemma}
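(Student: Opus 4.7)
The statement reduces almost entirely to Lemma \ref{Liewk1.5} via the associated-bundle description $M = G\times_K\pp$ (identifying $\pp^\vee$ with $\pp$ by the metric) and $\UU = G\times_K U$. The first two assertions are then immediate: openness of $\UU$ in $M$ follows from openness of $U$ in $\pp$ because the quotient map $G\times \pp\to G\times_K \pp$ is open, and $G$-invariance is clear since the $G$-action is by left multiplication on the first factor while $U$ sits in the second factor, already $K$-invariant.

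For the stratification of $M\setminus\UU$, I would fix a maximal torus $\t\subset\pp$ and, for each nonempty subset $\Sigma\subset R_\t/_\pm$, introduce the locally closed $K$-invariant subset
\[
S_\Sigma := K\cdot\Bigl(\bigcap_{\alpha\in\Sigma}\ker\alpha \;\setminus\; \bigcup_{\alpha\notin\Sigma}\ker\alpha\Bigr)\subset \pp.
\]
These are finite in number, $K$-invariant by construction, and partition $\pp\setminus U$; the same dimension count used in the proof of Lemma \ref{Liewk1.5}, namely the strict inclusion $[\kk,X]\subsetneq [\kk,\t]$ for $X$ on any wall, together with \eqref{rootsp1}, yields $\codim_\pp S_\Sigma\geq 2$. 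After refining by orbit type if necessary to get smooth pieces, the family $\{G\times_K S_\Sigma\}$ provides a finite stratification of $M\setminus\UU$ into $G$-invariant smooth submanifolds, each of codimension $\geq 2$ in $M$ since $M\to G/K$ is a fiber bundle with model fiber $\pp$ and each $G\times_K S_\Sigma$ is a subbundle of fiberwise codimension $\codim_\pp S_\Sigma$.

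For the cotangent fiber statement, fix $p = gK\in G/K$ and consider the diffeomorphism $\pp\overset{\sim}{\to} L_p$, $Y\mapsto [g:Y]$. Under this identification, $[g:Y]\in\UU$ iff $k^{-1}\cdot Y\in U$ for some $k\in K$, which by $K$-invariance of $U$ is equivalent to $Y\in U$. Hence $L_p\setminus\UU$ corresponds precisely to $\pp\setminus U$ under the above diffeomorphism, and inherits the stratification $\{S_\Sigma\}$ directly with the codimension bound preserved.

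The only genuinely nontrivial ingredient is the codimension-two estimate itself, which was already disposed of in Lemma \ref{Liewk1.5}; the remainder of the proof is a routine transfer through the associated-bundle construction, so I do not expect any serious obstacle.
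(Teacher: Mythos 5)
Your proposal is correct and follows the paper's intended route: the paper gives no separate argument for Lemma \ref{codim2}, stating it as an immediate consequence of Lemma \ref{Liewk1.5}, and your argument is exactly that routine transfer of openness, invariance, and the codimension-two stratification of $\pp\setminus U$ through the associated bundle $M=G\times_K\pp$ and its fibers $L_p\simeq\pp$.
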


We end this section by deriving a global analogue of decomposition \eqref{rootsp}. First, we identify, for any $Y\in U$, $\Ra{Y}$ with $\Ra{X_0}$ in the following way. Choose $k\in K$ such that $\wc_Y=k\cdot \wc_{X_0}$ which is possible by Lemma \ref{Liewk1} and Lemma \ref{Liewk2}(1). The restriction $\Ad(k)|_{\t_0}:\t_0\ra \t_Y$ gives the desired identification $\Ra{Y}\simeq \Ra{X_0}$ which does not depend on the choice of $k$, thanks to Lemma \ref{Liewk2}(1). Denote by $\pi:G\times \pp^{\vee}\ra M$ the quotient map.
\begin{definition}\label{Ua} Define distributions $\UU_0$ and $\UU_{\a}$, $\a\in\Ra{X_0}$ on $\UU$ as follows.
\begin{enumerate}
\item $\UU_0:=\UU_0^h\oplus \UU_0^v$ where at a point $[g:Y]\in \UU$,
\begin{align*}
\UU_0^h&:= \{ \pi_*(g_*\eta,0)|~\eta\in\t_Y\} \subset \Hor;\\
\UU_0^v&:= \{ \pi_*(0,\zeta )|~\zeta\in\t_Y\} \subset \Ver.
\end{align*}
\item $\UU_{\a}:=\UU_{\a}^h\oplus \UU_{\a}^v$ where at a point $[g:Y]\in \UU$,
\begin{align*}
\UU_{\a}^h&:= \{ \pi_*(g_*\eta,0)|~\eta\in\pa{Y}\} \subset \Hor;\\
\UU_{\a}^v&:= \{ \pi_*(0,\zeta )|~\zeta\in\pa{Y}\} \subset \Ver.
\end{align*}
\end{enumerate}
\end{definition}

\begin{lemma} \label{rootspglobal} $\UU_0$ and $\UU_{\a}$ are symplectic subbundles of $T\UU$. Moreover, we have
\begin{equation} \label{rootspglobaleq}
T\UU=\UU_0\oplus\bigoplus_{\a\in\Ra{X_0}}\UU_{\a}
\end{equation}
where the direct summands are pairwise symplectic orthogonal to each other.
\end{lemma}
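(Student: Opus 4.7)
The plan is to verify the three assertions—smooth subbundle, direct sum decomposition, and symplectic orthogonality/non-degeneracy—by combining the pointwise restricted root space decomposition \eqref{rootsp} with the standard expression for $\w_M$ in horizontal/vertical coordinates.

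First I would check well-definedness and smoothness of $\UU_0$ and $\UU_{\a}$. Under the change of representative $(g, Y) \leadsto (gk, \Ad(k^{-1}) Y)$ for $k \in K$, the identity $(gk)_* \Ad(k^{-1}) \eta = R_k(g_* \eta)$ together with $\pi \circ R_k = \pi$ reduces matters to the $K$-equivariance of $\t_Y$ and $\pa{Y}$, namely $\Ad(k^{-1}) \t_Y = \t_{\Ad(k^{-1}) Y}$ and $\Ad(k^{-1}) \pa{Y} = \pa{\Ad(k^{-1}) Y}$ (the latter under the canonical identification $\Ra{Y} \simeq \Ra{X_0}$ recalled just before Definition \ref{Ua}). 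Smoothness on $\UU$ follows from the descriptions $\t_Y = \ker(\ad(Y)|_{\pp}\colon \pp \to \kk)$, which has locally constant rank on $U$ by Definition \ref{goodopen}, and $\pa{Y}$ as a simultaneous generalized eigenspace of the smooth family $\ad(H)^2|_{\pp}$, $H \in \t_Y$.

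The direct sum decomposition $T\UU = \UU_0 \oplus \bigoplus_{\a \in \Ra{X_0}} \UU_{\a}$ is then immediate pointwise from \eqref{rootsp}: under the natural identifications $\Hor_{[g:Y]} \simeq \pp \simeq \Ver_{[g:Y]}$ provided by Lemma \ref{symwk1}(1), the horizontal part splits as $\t_Y \oplus \bigoplus_{\a} \pa{Y}$, and the vertical part splits likewise.

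For the symplectic computation, since $\nabla$ is torsion-free, the standard formula for the canonical symplectic form of a cotangent bundle, expressed in horizontal/vertical coordinates via the metric-induced Levi-Civita splitting, reads
\[ \w_M\bigl((\eta_1, \zeta_1), (\eta_2, \zeta_2)\bigr) = \langle \zeta_1, \eta_2 \rangle - \langle \zeta_2, \eta_1 \rangle, \]
with horizontal components $\eta_i$ and vertical components $\zeta_i$ both viewed as elements of $\pp$ via Lemma \ref{symwk1}(1). Since the Ad-invariant inner product makes the decomposition \eqref{rootsp} orthogonal (each $\pa{Y}$ is a simultaneous eigenspace of the symmetric operators $\ad(H)^2$ for $H \in \t_Y$, so distinct summands are orthogonal), the right-hand side vanishes whenever $\zeta$ and $\eta$ lie in different summands of \eqref{rootsp}; this yields the pairwise symplectic orthogonality of $\UU_0$ and the $\UU_{\a}$. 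Restricted to a single $\UU_0$ or $\UU_{\a}$, the formula is the standard cotangent-bundle symplectic form on $T^*\t_Y$ or $T^*\pa{Y}$, manifestly non-degenerate, so each is a symplectic subbundle.

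The one step where care is needed is the horizontal/vertical expression for $\w_M$: the absence of a curvature correction uses torsion-freeness of $\nabla$ in an essential way. Once that formula is in hand, the rest of the lemma follows mechanically from the orthogonality of the restricted root space decomposition.
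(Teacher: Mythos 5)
Your proposal is correct and follows essentially the same route as the paper: a pointwise reduction to the restricted root space decomposition \eqref{rootsp} combined with the horizontal/vertical expression for $\w_M$, with orthogonality of distinct root spaces killing the cross terms and the standard pairing giving non-degeneracy on each summand. The only difference is in how the key formula is justified: where you invoke the general torsion-free fact that no curvature correction appears in the Levi-Civita splitting, the paper derives the same expression by pulling back $\l_M$ to $G\times\pp$ via the Maurer-Cartan form, the would-be correction term $-\langle[\eta_1,\eta_2],Y\rangle$ vanishing because $[\pp,\pp]\subseteq\kk\perp\pp$; your extra verification that $\UU_0$ and $\UU_{\a}$ are well-defined smooth subbundles addresses a point the paper leaves implicit.
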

\begin{proof}
We use Lemma \ref{symwk1}(1) to express vectors in the horizontal subbundle $\Hor\subset TM$. Recall $\l_M:=\mb{p}d\mb{q}$ is the standard Liouville form. We have
\[\pi^*\l_M=\langle \b,Y\rangle \]
which gives, by the Maurer-Cartan equation \eqref{MC},
\begin{equation}\label{sympl}
\pi^*\w_M=-\frac{1}{2}\langle [\b,\b], Y\rangle -\langle\b,dY\rangle.
\end{equation}
Given $\a_1,\a_2\in\Ra{X_0}\cup\{0\}$. For $i=1,2$, let $\eta_i, \zeta_i\in\pp_{Y,\a_i}$ if $\a_i\ne 0$ and $\in \t_Y$ if $\a_i=0$. (Recall we have identified $\Ra{Y}$ with $\Ra{X_0}$.) Then, by \eqref{sympl},
\[ \pi^*\w_M((g_*\eta_1,\zeta_1),(g_*\eta_2,\zeta_2))=-\langle [\eta_1,\eta_2],Y\rangle - (  \langle \eta_1,\zeta_2 \rangle -  \langle \eta_2,\zeta_1 \rangle) .\]
The first term in the last expression vanishes because $[\eta_1,\eta_2]\in\kk$. The rest is obvious.
\end{proof}

\section{Basic Settings: Floer Theory} \label{Floer}
The main references for quilted Floer theory are \cite{EL, MWW}. We also recommend \cite{WW1, WW1.5, WW2, WWorient, WW3}. For wrapped Floer theory, we refer to \cite{Ab_JSG}.
\subsection{A Lagrangian correspondence} \label{Lagcorr}
Recall we have put $M=T^*(G/K)$, the cotangent bundle of a compact symmetric space $G/K$ and $\CO=G\cdot X_0$, the coadjoint orbit through a fixed point $X_0$. They are given the symplectic forms $\w_M$ and $\w_{\CO}$ respectively. We are going to construct a Lagrangian correspondence
\[  C:(M,\w_M)\ra (\CO,\w_{\CO}) \]
by expressing it as the graph of a symplectic quotient. Define $\C$ to be the submanifold
\[\C :=G\times_K\RF\subset G\times_K\pp^{\vee}\simeq T^*(G/K).\]
Recall $U\subset\pp$ from Definition \ref{goodopen}. Given any $Y\in U$. By definition, there exists a unique maximal torus $\t_Y$ in $\pp$ which contains $Y$. By Lemma \ref{Liewk2}, $\RF$ intersects the Weyl chamber $\wc_Y$ at a unique point which we denote by $\phi_0(Y)$. Clearly, $\phi_0$ is $K$-equivariant so that the map
\[
\begin{array}{rccc}
\phi: &\UU&\ra& \C\\
&[g:Y] &\mapsto & [g:\phi_0(Y)]
\end{array}
 \]
is well-defined. We also define $\sq: \C \ra \CO$ by  $\sq([g:Y]):= g\cdot Y$.

\begin{proposition} \label{comparesymplform}$~$
\begin{enumerate}
\item For any point $g\cdot X_0\in \CO$, the fiber $(\pj)^{-1}(g\cdot X_0)$ is equal to
\begin{equation}\label{fiber}
\{ [h:Y]|~h\in g\cdot\exp(\t_0), Y\in \wc_{X_0}\}
\end{equation}
which is a non-compact symplectic submanifold of $M$. These fibers exhibit the leaves of the distribution $\UU_0$.
\item Let $\a,\a_1,\a_2\in \Ra{Y}\simeq \Ra{X_0}$ with $\a_1\ne \a_2$. We have
\begin{equation}\label{aequal}
 (\pj)^*\w_{\CO}|_{\UU_{\a}^{\otimes 2}}= \frac{\a(\phi_0(Y))}{\a(Y)} \w_M|_{\UU_{\a}^{\otimes 2}}
\end{equation}
and
\begin{equation}\label{adiff}
 (\pj)^*\w_{\CO}|_{\UU_{\a_1}\otimes\UU_{\a_2}}= \w_M|_{\UU_{\a_1}\otimes\UU_{\a_2}} = 0.
\end{equation}
\item $\C$ is a coisotropic submanifold of $M$ and $\sq$ is the symplectic quotient map.
\end{enumerate}
\end{proposition}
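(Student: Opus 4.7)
The plan is to attack (1), (2), (3) in sequence, with (1) setting up the fiber/foliation picture that, combined with the proportionality in (2), makes (3) nearly formal; the main obstacle will be extracting the factor $\a(\phi_0(Y))/\a(Y)$ in \eqref{aequal}. For (1), I normalize $[h:Y]\in(\pj)^{-1}(g\cdot X_0)$ by the $K$-action: Lemma \ref{Liewk1} first puts $Y\in\t_0$, then the Weyl group realization in Lemma \ref{Liewk2}(1) pushes it into $\wc_{X_0}$, and uniqueness of $\wc_{X_0}\cap\RF=\{X_0\}$ forces $\phi_0(Y)=X_0$. Then $h\cdot X_0=g\cdot X_0$ combined with Lemma \ref{Liewk2}(2) gives $h=g\exp(q)k_0$ with $q\in\t_0$, $k_0\in\cent{X_0}{K}$. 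The crucial observation is that such $k_0$ lies in $\cent{\t_0}{K}$: since $X_0\in U$, the unique torus $\t_0$ through $X_0$ is preserved by $\Ad(k_0)$, so $k_0\in\norm{\t_0}{K}$, and the Weyl element it induces on $\t_0$ fixes the interior point $X_0\in\wc_{X_0}$ and must be trivial. Hence $k_0$ fixes $Y$ and can be absorbed across the $G\times_K$-equivalence to give $[h:Y]=[g\exp(q):Y]$. The tangent-space identification with $\UU_0$ follows from Definition \ref{Ua} by differentiating $q$ and $Y$; non-compactness of the fiber follows from $\t_0$ being a vector space, and symplecticness from Lemma \ref{rootspglobal}.

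For (2), the preparatory step is that for $\zeta\in\pa{Y}$ there exists a unique $X\in\ka{Y}$ with $[X,Y]=\zeta$, obtained by inverting $\ad(Y)\colon\ka{Y}\to\pa{Y}$ (whose square on $\pa{Y}$ equals $-4\pi^2\a(Y)^2\id$). By $K$-equivariance of $\phi_0$, $\phi_0(\Ad(e^{tX})Y)=\Ad(e^{tX})\phi_0(Y)$, so $d\pj$ sends the $\UU_\a^v$-direction of parameter $\zeta$ to $\Ad(g)[X,\phi_0(Y)]$ and the $\UU_\a^h$-direction of parameter $\eta$ to $\Ad(g)[\eta,\phi_0(Y)]$. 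Feeding these into $\w_{\CO}([\xi_1,\cdot],[\xi_2,\cdot])=-\langle[\xi_1,\xi_2],\cdot\rangle$ produces four bracket terms on $\UU_\a\otimes\UU_\a$; the $(h,h)$ and $(v,v)$ ones vanish because $[\pa{Y},\pa{Y}],[\ka{Y},\ka{Y}]\subset\kk$ while $\phi_0(Y)\in\pp$ and $\kk\perp\pp$. The surviving mixed terms reduce, after applying Ad-invariance, to evaluating $\ad(\phi_0(Y))\ad(Y)^{-1}$ on $\pa{Y}$; polarizing the relation $\ad(Z)^2=-4\pi^2\a(Z)^2\id$ using that $\t_Y$ is abelian yields $\ad(Z_1)\ad(Z_2)|_{\pa{Y}}=-4\pi^2\a(Z_1)\a(Z_2)\id$, which is the source of the factor $\a(\phi_0(Y))/\a(Y)$. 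A parallel computation of $\w_M$ from $\pi^\ast\w_M=-\tfrac12\langle[\b,\b],Y\rangle-\langle\b,dY\rangle$ produces $\langle\eta',\zeta\rangle-\langle\eta,\zeta'\rangle$, completing \eqref{aequal}. For \eqref{adiff}, the $(h,h)$ and $(v,v)$ terms again vanish by $\kk\perp\pp$, and the mixed terms because the relevant brackets land in root spaces for $\a_1\pm\a_2$, which are orthogonal to $\t_Y\ni\phi_0(Y)$.

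For (3), I identify $T\C$ at a point $[g:p]\in\C$ with $p\in\RF$: since $T_p\RF=[\kk,p]=\bigoplus_\a\pa{p}$, the splitting of Lemma \ref{rootspglobal} yields $T\C=\UU_0^h\oplus\bigoplus_\a\UU_\a$. Pairwise symplectic orthogonality in Lemma \ref{rootspglobal} forces $(T\C)^\omega\subset\UU_0$; inside $\UU_0$, $\w_M$ pairs $\UU_0^h$ and $\UU_0^v$ nondegenerately while each is isotropic, so $(T\C)^\omega=\UU_0^h\subset T\C$, proving coisotropicity. Restricting (1) to $\C$ forces $Y=X_0$ (as $\wc_{X_0}\cap\RF=\{X_0\}$), so the fibers of $\sq$ are $\{[g\exp(q):X_0]:q\in\t_0\}$, with tangent spaces exactly $\UU_0^h=(T\C)^\omega$. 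Finally, applying (2) at $Y=p$ (where $\phi_0(Y)=p$) gives $\a(\phi_0(Y))/\a(Y)=1$ on every $\UU_\a$ while the cross terms vanish, so $\sq^\ast\w_{\CO}=\w_M|_{T\C}$ modulo the null direction $\UU_0^h$, exhibiting $\sq$ as the symplectic quotient map.
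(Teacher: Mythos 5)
Your proposal is correct and follows essentially the same route as the paper's proof: the same normalization of fiber elements via Lemma \ref{Liewk2}, the same construction of $\zeta'=\ad(Y)^{-1}\zeta\in\ka{Y}$ to compute $D(\pj)$ and extract the factor $\a(\phi_0(Y))/\a(Y)$ from the $\ad^2$ eigenvalue relation, and the same identification $T\C=\UU_0^h\oplus\bigoplus_\a\UU_\a$ for part (3). Your write-up merely fills in details the paper leaves implicit (e.g.\ that $k_0\in\cent{X_0}{K}$ centralizes $\t_0$, and the explicit identification of $(T\C)^{\w}$ with $\UU_0^h$).
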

\begin{proof}
\begin{enumerate}
\item[]
\item Since $\UU_0$ is $G$-invariant and $\phi, \sq$ are $G$-equivariant, it suffices to consider the fiber of $\pj$ at $X_0$. By definition, it is equal to
\[ \{[h:Y]|~Y\in U, h\cdot \phi_0(Y)=X_0\}.\]
Let $[h:Y]$ be an element of this set. Pick $k\in K$ such that $\phi_0(Y)=k\cdot X_0$. Then $hk\in \cent{X_0}{G}=\exp(\t_0)\cdot\cent{X_0}{K}$ (Lemma \ref{Liewk2}) which implies that $[h:Y]$ is an element of the set \eqref{fiber}, by a change of its representative. The rest is obvious.

\item Let $\a\in \Ra{Y}$. By Lemma \ref{symwk1}, $\UU_{\a}$ consists of vectors of the form $\pi_*(g_*\eta,\zeta)$ where $\eta,\zeta\in \pa{Y}$ and $\pi:G\times \pp\ra G\times_K\pp$ is the quotient map. For $i=1,2$, let $\a_i\in \Ra{Y}$ and $\xi_i:= (g_*\eta_i,\zeta_i)$ with $\eta_i,\zeta_i\in \pp_{Y,\a_i}$. Put $\zeta'_i:=\frac{1}{4\pi^2\a_i(Y)^2}[Y,\zeta_i]$. Then $\zeta'_i\in\kk_{Y,\a_i}$ and $[\zeta'_i,Y]=\zeta_i$. We have
\begin{align*}
 D(\pj\circ\pi)_{[g:Y]}\xi_i &= \left.\frac{d}{dt}\left( ge^{t\eta_i}e^{t\zeta'_i}\cdot \phi_0(Y) \right)\right|_{t=0}\\
 & = [g\cdot (\eta_i+\zeta'_i),g\cdot \phi_0(Y)].
\end{align*}
It follows that
\begin{align*}
(\pj\circ\pi)^*\w_{\CO}(\xi_1,\xi_2) &= - \langle g\cdot [\eta_1+\zeta'_1, \eta_2+\zeta'_2], g\cdot \phi_0(Y) \rangle \\
&= - \langle [\eta_1+\zeta'_1, \eta_2+\zeta'_2],  \phi_0(Y) \rangle \\
&= - \langle [\zeta'_1,\eta_2] +[\eta_1,\zeta'_2],  \phi_0(Y) \rangle \\
& = \frac{\a_1(\phi_0(Y))}{\a_1(Y)} \langle \eta_2,\zeta_1\rangle-\frac{\a_2(\phi_0(Y))}{\a_2(Y)} \langle \eta_1,\zeta_2\rangle  .
\end{align*}

\noindent On the other hand, by the computation in the proof of Lemma \ref{rootspglobal},
\[\pi^*\w_M(\xi_1,\xi_2) = \langle \eta_2,\zeta_1\rangle - \langle \eta_1,\zeta_2\rangle.\]
If $\a_1=\a_2$, we obtain \eqref{aequal}; if $\a_1\ne\a_2$, we obtain \eqref{adiff}, since $\pp_{Y,\a_1}$ is orthogonal to $\pp_{Y,\a_2}$.

\item It is not hard to see that, for any $[g:Y]\in\C$,
\[T_{[g:Y]}\C = \UU_0^h\oplus \bigoplus_{\a\in\Ra{X_0}}\UU_{\a}.\]
The result then follows from Lemma \ref{rootspglobal}, parts (1) and (2) above.
\end{enumerate}
\end{proof}

\begin{definition} Define $C\subset M^-\times \CO$ to be graph of the symplectic quotient $\sq:\C\ra \CO$, i.e.
\[C:=\{ ([g:Y],X)\in M\times \CO|~Y\in\RF, g\cdot Y=X\}.\]
\end{definition}

\begin{remark}
\begin{enumerate}
\item[]
\item If $G/K=(H\times H)/H \simeq H$ is a compact Lie group, the corresponding Lagrangian correspondence $C$ is equal to the famous moment correspondence \cite{W} associated to the \Ham $H$-manifold $H/T$ where $T$ is a maximal torus in $H$.
\item The symplectic quotient $\sq$ is in fact induced by a \Ham torus action on $\UU$ which is characterized by the property that it restricts to the parallel transport map (with respect to the flat metric) over $\UU\cap T^*T$ for every maximal torus $T$ in $G/K$. This action is used to construct the wonderful compactification \cite{wonderful} of $T^*(G/K)$\footnote{Under the mild assumption that $G/K$ is of adjoint type.} via non-abelian symplectic cuttings \cite{NASC}, where $\CO$ corresponds to the intersection of all boundary divisors.
\item The Lagrangian correspondence $C$ is a special case of a more general construction given in \cite{GS}: let $\G$ be a connected Lagrangian of a symplectic manifold $N$, and $B$ a smooth manifold. Suppose there is a smooth map $f:B\ra Ham(N)\cdot \G$ from $B$ to the orbit through $\G$ under the action by the group of \Ham diffeomorphisms of $N$. Then, provided an obstruction class in $ H^2(B;\RR)$ vanishes, there exists a Lagrangian correspondence $C_f:T^*B\ra N$ whose geometric composition with every cotangent fiber $T^*_bB$ is embedded and equal to $f(b)\cdot\G$.
\end{enumerate}
\end{remark}

For any $p\in G/K$, we put $L_p:=T^*_p(G/K)$ and $\RFF{p}:=g\cdot \RF$ where $p=gK$\footnote{The latter is well-defined since $K$ preserves $\RF$.}.
\begin{lemma} The geometric composition $L_p\circ C$ is embedded and equal to $\RFF{p}$. \hfill{$\square$}
\end{lemma}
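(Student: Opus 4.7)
The plan is to unpack the definition of geometric composition and verify three points: (i) $L_p$ meets $\C$ transversely in $M$ so that $L_p \times_M \C$ is a manifold of the expected dimension, (ii) the map $\sq$ restricted to $L_p \cap \C$ is injective, and (iii) its image is exactly $\RFF{p}$.

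I would start by fixing a representative $g \in G$ with $p = gK$ and parametrizing $L_p$ as $\{[g:Y] : Y \in \pp\}$. Using the equivalence $[gk:Y] = [g:k\cdot Y]$ together with $K$-invariance of $\RF$, the set-theoretic intersection is
\[ L_p \cap \C = \{[g:Y] : Y \in \RF\}, \]
which is in bijection with $\RF$ via $Y \mapsto [g:Y]$.

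For transversality at $[g:Y_0]$ with $Y_0 \in \RF$, I would identify $T_{[g:Y_0]}M$ with $\pp \oplus \pp$ using the horizontal-vertical splitting of Lemma \ref{symwk1}(1), writing vectors as $\pi_*(g_*\eta, \zeta)$ with $\eta,\zeta \in \pp$. Under this identification $TL_p = \{0\} \oplus \pp$ (pure vertical), while $T_{[g:Y_0]}\C = \pp \oplus T_{Y_0}\RF$ (after using $\gg = \kk \oplus \pp$ and the infinitesimal $K$-action to normalize the Lie-algebra component into $\pp$, noting that $[\kk,Y_0] = T_{Y_0}\RF$). Their sum is all of $\pp \oplus \pp$, so the intersection is transverse, with intersection tangent space $\{0\} \oplus T_{Y_0}\RF$ of the correct dimension $\dim \RF$.

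Finally, $\sq$ restricted to $L_p \cap \C$ sends $[g:Y] \mapsto g \cdot Y$, which under the identification $L_p \cap \C \cong \RF$ is simply the restriction of the diffeomorphism of $\CO$ given by left-translation by $g$, hence an embedding with image $g \cdot \RF = \RFF{p}$. There is essentially no obstacle beyond the transversality computation, which is routine from the quotient construction $\C = G \times_K \RF$ together with the $G$-equivariance of $\sq$.
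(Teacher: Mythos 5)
Your proof is correct, and it takes the standard route: the paper states this lemma without proof (it is marked as immediate), and your verification — identifying $L_p\cap\C$ with $\RF$ via $[g:Y]\mapsto Y$, checking transversality through the horizontal--vertical splitting with $T L_p$ vertical and $T_{[g:Y_0]}\C=\Hor\oplus T_{Y_0}\RF$ using $[\kk,Y_0]=T_{Y_0}\RF$, and observing that $\sq$ restricts to the translation $Y\mapsto g\cdot Y$ onto $\RFF{p}$ — is exactly the computation the authors leave to the reader.
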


In order to apply Floer theory, we need $\CO$, $\RF$ and $C$ to be monotone.
\begin{assume} \label{monotone} There exists $\monoconst>0$ such that the following equality holds
\begin{equation}\label{sumofroot}
\langle - , X_0\rangle|_{\t_0} = 2\monoconst\sum_{\a\in\Ra{X_0}} \ma \a(-).
\end{equation}
\end{assume}
\begin{remark} In order for Assumption \ref{monotone} to make sense, we have to show that if $X_0'\in \t_0$ satisfies \eqref{sumofroot} (with $\langle - , X_0\rangle$ replaced by $\langle - , X_0'\rangle$), then $X_0'\in\wc_{X_0}$. This is a standard fact, at least in the group case, which is proved as follows. Put $\rho:=\sum_{\a\in\Ra{X_0}}\ma \a$, regarded as an element of $\t_0$ by the metric. Let $\b\in\Ra{X_0}$ be a simple root, i.e. a root such that $\frac{1}{2}\b\not\in \Ra{X_0}$ and it defines a boundary wall of $\wc_{X_0}$. Denote by $s_{\b}$ the reflection across $\ker(\b)$. Since the segment joining $X_0$ and $s_{\b}(X_0)$ intersects only one wall, namely $\ker(\b)$, it follows that $s_{\b}$ preserves $\Ra{X_0}\setminus \{\b,2\b\}$ or  $\Ra{X_0}\setminus \{\b\}$, depending on whether $2\b$ is a restricted root or not, and hence
\[s_{\b}(\rho)=\left\{ \begin{array}{ll}
\rho -2(\maaa{\b}+2\maaa{2\b})\b&, 2\b\in\Ra{X_0}\\ [-1em]\\
\rho -2\maaa{\b}\b &, 2\b\not\in\Ra{X_0}
 \end{array}\right. .\]
In both cases, we have $\langle \rho,\b\rangle >0$ so that $\rho\in\wc_{X_0}$ as desired.
\end{remark}

The following proposition, which will be proved in Appendix \ref{2lemma}, is necessary in order to have a well-defined quilted Floer theory for our settings. It implies in particular the monotonicity of the symplectic manifold $(\CO,\w_{\CO})$ as well as the Lagrangians $\RFF{p}$ and $C$.
\begin{proposition}\label{admiss} Let $\ul{\S}$ be a compact genus zero\footnote{Since the Hurewicz map $\pi_2(\CO)\ra H_2(\CO;\ZZ)$ is bijective, the proposition also holds for $\ul{\S}$ of arbitrary genus.} quilted Riemann surface. Label each patch of $\ul{\S}$ by either $M$ or $\CO$, each boundary component by either $L_p$ or $\RFF{p}$ (different boundary components may correspond to different $p$), and each seam by $C$. Let $\ul{u}$ be a quilted map defined on $\ul{\S}$ which respects the labelling. Denote by $\area(\ul{u})$ the sum of the symplectic areas of all patches of $\ul{u}$ and $\mu(\ul{u})$ its Maslov index. Under Assumption \ref{monotone}, we have
\[ \area(\ul{u})=\monoconst\cdot \mu(\ul{u}). \]
\end{proposition}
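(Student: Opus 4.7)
The plan is to reduce the integrated monotonicity identity $\area(\ul{u}) = \monoconst \cdot \mu(\ul{u})$ to monotonicity on each individual piece and verify it case by case, exploiting the geometric composition $L_p \circ C = \RFF{p}$ to remove seams. Both $\area$ and $\mu$ are additive under patch decomposition and depend only on the labelled homotopy class of $\ul{u}$, so the claim is cohomological in nature and it suffices to verify it on a generating set of classes.

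I would first apply Wehrheim--Woodward strip-shrinking at every $C$-seam: since the geometric composition $L_p \circ C = \RFF{p}$ is embedded (by the preceding lemma), an $M$-patch adjacent to an $\CO$-patch across a $C$-seam is identified, with no change in total area or total Maslov index, with a single $\CO$-patch bounded by $\RFF{p}$. Combined with genus-zero disk bubbling, this reduces the claim to four basic classes: spheres in $M$, spheres in $\CO$, disks in $M$ bounded by some $L_p$, and disks in $\CO$ bounded by some $\RFF{p}$.

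The $M$-side is essentially trivial. The form $\w_M = d\l_M$ is exact, and cotangent fibres are exact Lagrangians ($\l_M|_{L_p} = 0$), so $\area = 0$ on both sphere and $L_p$-disk classes. The splitting $TM = \Hor \oplus \Ver$ from Lemma \ref{symwk1} decomposes $TM$ into Lagrangian subbundles which are complex-dual to each other under any compatible $J$, hence $c_1(TM) = 0$; the Maslov class of $L_p$ vanishes by the same splitting. Thus $\mu = 0$ as well.

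The coadjoint orbit side is the substantive case. Under the Kirillov--Kostant identification of $H^2(\CO;\RR)$ with $W_{X_0}$-invariant linear forms on $\t_0$, the class $[\w_{\CO}]$ corresponds to $\langle -,X_0\rangle|_{\t_0}$, while a Chern--Weil computation applied to the root space decomposition \eqref{rootsp}, with each $\pa{X_0}$ contributing multiplicity $\ma$ to the complex tangent bundle, yields $c_1(T\CO) \leftrightarrow 2\sum_{\a \in \Ra{X_0}} \ma \a$. Assumption \ref{monotone} is then precisely $[\w_{\CO}] = \monoconst \cdot c_1(T\CO)$, which gives sphere monotonicity via $\mu = 2c_1$. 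For disks bounded by $\RFF{p}$, I use Lemma \ref{orientable}: $\RFF{p}$ is a connected component of the fixed set of the antisymplectic involution $-D\involute$, and doubling a disk $u$ across this involution yields a sphere $\hat u$ with $\area(\hat u) = 2\area(u)$ and $\mu(\hat u) = 2\mu(u)$, so the sphere case transfers to disks. The principal technical obstacle I foresee is the Chern--Weil step: $\CO$ is only a generic coadjoint orbit with respect to \emph{restricted} roots, so the stabilizer $\cent{X_0}{G}$ may have nontrivial semisimple part whenever some $\ma > 1$, and the naive Borel--Weil formula for full flags does not apply verbatim. The multiplicities must be tracked carefully, and this is presumably where the bulk of the deferred calculation in Appendix \ref{2lemma} is located.
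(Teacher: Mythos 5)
Your reduction step contains the essential gap. You propose to remove every $C$-seam by strip-shrinking, asserting that an $M$-patch adjacent to an $\CO$-patch across a $C$-seam can be replaced by an $\CO$-patch with boundary on $\RFF{p}$ ``with no change in total area or total Maslov index''. Geometric composition only applies to strip-shaped patches carrying an $L_p$ boundary on the other side; in the quilted surfaces of the proposition an $M$-patch may have its \emph{entire} boundary on seams (e.g.\ a two-patch quilted sphere: a disk in $M$ and a disk in $\CO$ glued along one $C$-seam), and then there is nothing to compose with and the patch cannot be shrunk away. Worse, the $M$-patches are not area-irrelevant: exactness of $\w_M$ only converts their areas into Liouville boundary integrals $\int\g^*\l_M$ along the lifts $\g$ of the seam curves into $\C$, and these integrals are generically nonzero because the seam boundary can wrap the torus fibers of $\sq:\C\ra\CO$. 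Concretely, after folding, your list of basic classes misses exactly the relative disk classes on the correspondence $C\subset M^-\times\CO$ whose boundary is a nontrivial loop in the fiber torus $\t_0/Q^{\vee}$: these have constant $\CO$-component, area $-\langle q,X_0\rangle$ coming entirely from the Liouville term, and Maslov index $-2\sum_{\a\in\Ra{X_0}}\ma\a(q)$, and they are not generated by spheres in $M$, spheres in $\CO$, $L_p$-disks and $\RFF{p}$-disks (their image under $\pt:\pi_2(M^-\times\CO,C)\ra\pi_1(C)$ is nonzero in general). Matching area and index on precisely these classes is where Assumption \ref{monotone} is used as an identity of linear forms on $\t_0$ evaluated on $Q^{\vee}$, and it is the heart of the paper's proof (Lemma \ref{monocapdisk}), not the Chern--Weil evaluation on sphere classes that you single out as the main difficulty.

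For comparison, the paper handles the quilt globally rather than by composing seams away: using the weak contractibility statement of Lemma \ref{easy} it homotopes every seam condition $TC$ to the split form $L_p^{\perp}\oplus T\RFF{p}$, so that the $M$-patches contribute nothing to $\mu(\ul{u})$ (the horizontal subbundle is globally defined) while their areas become Liouville integrals along the lifted seam curves in $\C$; this reduces the statement to pairs $(u,\{\g_i\})$ with $u$ mapping into $\CO$ and $\g_i$ lifts of the boundary into $\C$, then to disks and spheres by simple connectivity of $\CO$, and finally to the explicit computation of both sides on the capping-disk classes indexed by $q\in Q^{\vee}$. Your observations on the $M$-side ($c_1(TM)=0$, vanishing Maslov class of $L_p$), the sphere monotonicity of $\CO$, and the doubling trick for $\RFF{p}$-disks via the involution $-D\involute$ are all fine, but they only cover the classes coming from $\pi_2(M^-\times\CO)$ and from the two ordinary Lagrangians; without an argument that every quilted homotopy class decomposes into your four types (which fails in general), the proof is incomplete.
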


\begin{corollary} The minimal Maslov numbers of $\RFF{p}$ and $C$ are at least two.
\end{corollary}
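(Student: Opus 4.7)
The plan is to reduce the corollary to orientability of the two Lagrangians. For any orientable Lagrangian $L\subset X$, the Maslov class $\mu_L:\pi_2(X,L)\to\ZZ$ takes values in $2\ZZ$, so the minimal Maslov number is automatically at least $2$. Proposition \ref{admiss} complements this by confirming that the minimal Maslov number is strictly positive and finite: any non-constant pseudo-holomorphic representative has symplectic area $\monoconst\cdot\mu>0$.

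For $\RFF{p}$, orientability is immediate from Lemma \ref{orientable}, since $\RFF{p}=g\cdot\RF$ is a diffeomorphic image of $\RF$ under left translation by $g\in G$ (where $p=gK$).

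For $C\subset M^-\times\CO$, the graph structure identifies $C$ diffeomorphically with $\C=G\times_K\RF$, so it suffices to show $\C$ is orientable. My plan is to exhibit $\sq:\C\to\CO$ as a principal $T_0$-bundle. To compute the fibers, any $[g:Y]\in\sq^{-1}(X)$ with $Y=kX_0\in\RF$ can be rewritten via $[g:kX_0]=[gk:X_0]$, giving
\[ \sq^{-1}(X) \cong \{h\in G: hX_0=X\}/K_{X_0} \cong G_{X_0}/K_{X_0}. \]
Lemma \ref{Liewk2}(2) identifies $G_{X_0}/K_{X_0}$ with $\exp(\t_0)/(\exp(\t_0)\cap K)=T_0$, a torus. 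Combined with the free Hamiltonian $T_0$-action mentioned in the remark following the definition of $C$, this realizes $\sq$ as a principal $T_0$-bundle. Since $\CO$ is orientable (being symplectic) and the structure group $T_0$ is connected, the total space $\C$ is orientable, and hence so is $C$.

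With both Lagrangians orientable, the Maslov classes take values in $2\ZZ$, and combined with positivity from Proposition \ref{admiss}, the minimal Maslov numbers are at least $2$. The main subtlety is the fibration structure of $\sq$ and the corresponding fiber identification $G_{X_0}/K_{X_0}\cong T_0$, which is where Lemma \ref{Liewk2}(2) plays an essential role; everything else is a formal consequence of the Maslov class being even.
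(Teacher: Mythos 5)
Your proposal is correct and is essentially the paper's argument: Proposition \ref{admiss} together with the evenness of the Maslov class of an orientable Lagrangian reduces the statement to orientability, Lemma \ref{orientable} handles $\RFF{p}$, and orientability of $C\simeq\C$ is extracted from the fibration $\sq:\C\ra\CO$, whose fibers you correctly identify with $\cent{X_0}{G}/\cent{X_0}{K}\simeq T_0$ via Lemma \ref{Liewk2}(2). The only point of divergence is the final step for $C$: the paper concludes orientability of $\C$ from the simple connectedness of the base $\CO$ together with orientability of the torus fibers, whereas you invoke a principal $T_0$-bundle structure plus orientability of the base. The principality claim is in fact true, but as written it is under-justified: the remark you cite only says that $\sq$ is induced by a Hamiltonian torus action on $\UU$; it does not assert that this action is free along $\C$. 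Two quick ways to close this. (a) Drop principality entirely: since you have already shown the fibers are tori and $\CO$ is simply connected, $H^1(\C;\ZZ_2)$ injects into the $H^1$ of a fiber, so $w_1$ of the vertical bundle vanishes, and $T\C\simeq \sq^*T\CO\oplus(\text{vertical})$ is orientable --- this is exactly the paper's route. (b) Prove freeness directly: any $k\in\cent{X_0}{K}$ preserves the unique maximal torus $\t_0$ through the regular point $X_0$ and fixes $X_0$, hence acts trivially on $\t_0$ and commutes with $\exp(\t_0)$; consequently $\cent{X_0}{K}$ is normal in $\cent{X_0}{G}=\exp(\t_0)\cdot\cent{X_0}{K}$, the quotient $\cent{X_0}{G}/\cent{X_0}{K}\simeq T_0$ is a torus group acting freely on $\C\simeq G/\cent{X_0}{K}$ with orbits the fibers of $\sq$, and its fundamental vector fields trivialize the vertical bundle. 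With either repair your argument is complete and matches the paper's in substance.
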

\begin{proof}
By Proposition \ref{admiss}, it suffices to show that they are orientable. This follows from Lemma \ref{orientable} and the fact that $C$ fibers over simply-connected $\CO$ with torus fibers which are orientable.
\end{proof}
\subsection{A notion of capping disks and the Floer cochain complexes} \label{capdisk}
Let $p\in G/K$. By applying the machinery developed in \cite{EL} and \cite{MWW} to the Lagrangian correspondence $C$, we obtain an $A_{\infty}$ homomorphism
\[  \Phi_C: CW_b^*(L_p,L_p)\ra CF^*((L_p,C),(L_p,C)).\]
Recall we have to enlarge the target of $\Phi_C$ in order to obtain Theorem \ref{main}. To this end, we introduce a notion of capping disks and describe how $\Phi_C$ and the $A_{\infty}$ structure $\{\mu_k\}_{k=1}^{\infty}$ on its target are modified. Since more than one cotangent fiber will be considered and we ultimately need $(L_p,C)\simeq \RFF{p}$, it is better to carry out the task in the settings of $A_{\infty}$ categories. Let $p_0,p_1\in G/K$ be two points. Put $L_i:=L_{p_i}$ and $\RFF{i}:=\RFF{p_i}$. We construct an $A_{\infty}$ category $\AA$ as follows.
\begin{definition} The objects of $\AA$ are
\[(L_0,C), (L_1,C), \RFF{0}, \RFF{1}.\]
\end{definition}

Next we define the morphism spaces of $\AA$. Observe that every generalized \Ham chord in our settings consists of up to three \Ham chords, each in either $M$ or $\CO$. For example, the generalized \Ham chords for defining $CF^*((L_1,C),\RFF{0})$ are of the form $(x_1,x_2)$ where $x_1$ is a time-$\delta$ \Ham chord in $M$ (for a fixed $\delta>0$) and $x_2$ is a time-1 \Ham chord in $\CO$. They are required to satisfy
\begin{equation}\label{genHam}
x_1(0)\in L_1,~(x_1(\delta),x_2(0))\in C,~ x_2(1)\in\RFF{0}.
\end{equation}
To simplify our notations, we denote any generalized \Ham chord by $\ul{x}=(x_-,x_{\CO},x_+)$ where $x_{\CO}$ is a time-1 \Ham chord in $\CO$, and each of $x_-$ and $x_+$ is either a time-0 (i.e. constant) or a time-$\delta$ \Ham chord in $M$. These (possibly constant) \Ham chords are required to satisfy a condition analogous to \eqref{genHam}. In the above example, $x_-$ is equal to $x_1$, and $x_{\CO}$ is equal to $x_2$. As for $x_+$, recall that $\RFF{0}=L_0\circ C$ is embedded so there is a unique point $[g:Y]\in M$ such that $([g:Y],x_{\CO}(1))\in C$. We have $x_+\equiv [g:Y]$. Denote by $\DH$ the unit upper half-disk and by $\pt_a\DH$ its boundary arc $\pt\DH\setminus(-1,1)$.
\begin{definition} \label{mcd} Let $\ul{x}=(x_-,x_{\CO},x_+)$ be a generalized \Ham chord.
\begin{enumerate}
\item A \textit{\capping disk} for $\ul{x}$ is a pair $(u,\g)$ consisting of maps
\[u:\DH\ra \CO\quad\text{and}\quad\g:\pt_a\DH\ra \C\]
which satisfies
\begin{enumerate}
\item $\g$ is a lift of $u|_{\pt_a\DH}$ with respect to $\sq$;
\item $x_-\#\g\#x_+$ is continuous; and
\item $u(t)=x_{\CO}\left(\frac{1-t}{2}\right)$ for any $t\in [-1,1]$.
\end{enumerate}
\item A \textit{homotopy of \capping disks} for $\ul{x}$ consists of homotopies $s\mapsto u_s$ and $s\mapsto \g_s$ such that for any $s$, $(u_s,\g_s)$ is a \capping disk for $\ul{x}$.
\end{enumerate}
\end{definition}

Let $\ul{L}$ and $\ul{L}'$ be any objects of $\AA$. Denote by $\X(\ul{L},\ul{L}')$ the set of generalized \Ham chords for the cyclic set $(\ul{L},(\ul{L}')^T)$ of Lagrangian correspondences. Fix a coefficient ring $\coeff$.
\begin{definition} \label{morphismspace} The morphism space from $\ul{L}$ to $\ul{L}'$ is defined to be
\[ Hom_{\AA}^*(\ul{L},\ul{L}') := \bigoplus \coeff\langle \ul{x},[u,\g]\rangle \]
where the direct sum is taken over all $\ul{x}\in\X(\ul{L},\ul{L}')$ and all homotopy classes $[u,\g]$ of \capping disks for $\ul{x}$.
\end{definition}

\begin{remark} Strictly speaking, the notation $\coeff\langle \ul{x},[u,\g]\rangle$ which appears in Definition \ref{morphismspace} should be replaced by the determinant line of certain Cauchy-Riemann operator associated to the pair $(\ul{x},[u,\g])$. This point will be clarified in Appendix \ref{sign}. Since sign issue will not arise in the proof of our main theorem, we continue to use the present notation.
\end{remark}

We now describe how to adapt the definitions of $\mu_k$ and $\Phi_C$ to our settings. For simplicity, we call the \capping disks for any input generalized \Ham chord which appears in $\mu_k$ input \capping disks. Let $\ul{u}$ be a pseudoholomorphic quilted map, defined on a quilted surface $\ul{\S}$, which contributes to a term in either $\mu_k$ or $\Phi_C$. This map and the input \capping disks (if there are any) determine a \capping disk for the output \Ham chord which is defined as follows. First notice that $\ul{\S}$ has a unique patch $\S_{\CO}$ which is labelled by $\CO$. Attaching the input \capping disks to the restriction $\ul{u}|_{\S_{\CO}}$ gives, after a reparametrization of the domain, a map $u_{out}:\DH\ra\CO$ which will be one part of the desired \capping disk. It remains to determine the other part, namely a lift $\g_{out}$ of $u_{out}|_{\pt_a\DH}$. Let $\ell$ be a boundary arc of $\S_{\CO}$. Then $\ell$ is labelled by either $C$ or $\RFF{i}$. In the first case, $\S_{\CO}$ has an adjacent patch of which $\ell$ is the common boundary. The restriction of $\ul{u}$ to this patch gives us a lift of $u_{out}|_{\ell}$. In the second case, $u_{out}|_{\ell}$ has a unique lift in $\C$ which lies in $L_i$. Then $\g_{out}$ is defined by concatenating these lifts, alternatively, with the lifts which are part of the input \capping disks.

In order to prove our main theorem, we also need another model for the Floer cohomology of $\RF$, namely the Biran-Cornea \textit{pearl complex} \cite{BCP}. It is the cochain complex freely generated by the critical points of a Morse function $h:\RF\ra\RR$ whose differential counts \textit{pearly trajectories}. As above, we define \capping disks for any critical points so that we obtain the enlarged pearl complex which we denote by $QC^*(\RF)$. One can define a unital associative product $\star$ on its cohomology $QH^*(\RF)$ by counting \textit{pearly triangles}. See \cite{BCP} for more details.

Finally, we remark that the cochain complexes $Hom_{\AA}^*(\ul{L},\ul{L}')$ and $QC^*(\RF)$ are pairwise quasi-isomorphic. For our purpose, we only need the following two quasi-isomorphisms
\[ Y: Hom_{\AA}^*((L_0,C),(L_1,C))\ra Hom_{\AA}^*(\RFF{0},\RFF{1})\]
and
\[ PSS: Hom_{\AA}^*(\RFF{0},\RFF{0})\ra QC^*(\RF).\]
The first map $Y$, constructed in \cite{LL}, is the composition
\[ Hom_{\AA}^*((L_0,C),(L_1,C))\xrightarrow{Y_2} Hom_{\AA}^*((L_0,C),\RFF{1})\xrightarrow{Y_1^{-1}}  Hom_{\AA}^*(\RFF{0},\RFF{1})\]
where $Y_1$ and $Y_2$ are defined by counting configurations shown in Figure 1 and Figure 2 respectively. (They are chain isomorphisms, see Lemma \ref{filY}.)
\begin{center}
\begin{minipage}{7.5cm}
\begin{center}
\vspace{-.1cm}
\begin{tikzpicture}
\tikzmath{\x1 = .6; \x2 = 1.6; \x3=5; \x4=2.5; \x5=1;  \x9=0.45;}

\draw [line width=\x9mm] (\x3-\x4,0.5*\x1) -- (\x3,0.5*\x1);
\draw [line width=\x9mm] (0,1.5*\x1+\x2) -- (\x3,1.5*\x1+\x2);

\draw [->, line width=\x9mm] (0,0) -- (0,\x1);
\draw [->, line width=\x9mm] (0,\x1) -- (0,1.5*\x1+\x2);
\draw [->, line width=\x9mm] (\x3,0.5*\x1) -- (\x3,1.5*\x1+\x2);

\draw [line width=\x9mm, pattern=north east lines, pattern color=gray]  (0, \x1) -- (\x3-\x4-\x5,\x1) to[out=-2,in=150] (\x3-\x4,0.5*\x1) to[out=210,in=2]  (\x3-\x4-\x5,0) -- (0, 0) -- cycle;
\node[anchor =center] at (0,3*\x1+\x2) {};
\node[anchor = west] at (0.45*\x3,\x1+0.5*\x2) {$\CO$};
\node[anchor = south] at (\x3-0.8*\x3, 0.95*\x1) {$C$};
\node[anchor = north] at (\x3-0.8*\x3, 0) {$L_0$};
\node[anchor = south] at (\x3-0.2*\x3, 1.4*\x1+\x2) {$\RFF{1}$};
\node[anchor = north] at (\x3-0.2*\x3, 0.5*\x1) {$\RFF{0}$};
\node[anchor = center] at (\x3-0.8*\x3, 0.5*\x1) {$M$};
\end{tikzpicture}

\vspace{0cm}
\noindent \hypertarget{fig1}{FIGURE 1.}
\vspace{.2cm}
\end{center}
\end{minipage}
\begin{minipage}{7.5cm}
\begin{center}
\vspace{-.1cm}
\begin{tikzpicture}
\tikzmath{\x1 = .6; \x2 = 1.6; \x3=5; \x4=2.5; \x5=1;  \x9=0.45;}
\draw [line width=\x9mm] (0,1.5*\x1+\x2) -- (\x4, 1.5*\x1+\x2);

\draw [->, line width=\x9mm] (0,0) -- (0,\x1);
\draw [->, line width=\x9mm] (0,\x1) -- (0,1.5*\x1+\x2);
\draw [->, line width=\x9mm] (\x3,0) -- (\x3,\x1);
\draw [->, line width=\x9mm] (\x3,\x1) -- (\x3,\x1+\x2);
\draw [->, line width=\x9mm] (\x3,\x1+\x2) -- (\x3,2*\x1+\x2);

\draw [line width=\x9mm, pattern=north east lines, pattern color=gray]  (\x3, \x1+\x2) -- (\x4+\x5,\x1+\x2) to[out=178,in=-30]  (\x4,1.5*\x1+\x2) to[out=30,in=182] (\x4+\x5,2*\x1+\x2) -- (\x3, 2*\x1+\x2) -- cycle;

\draw [line width=\x9mm, pattern=north east lines, pattern color=gray]  (0,0) -- (\x3,0) -- (\x3,\x1) -- (0,\x1) -- cycle;

\node[anchor =center] at (0,3*\x1+\x2) {};
\node[anchor = west] at (0.45*\x3,\x1+0.5*\x2) {$\CO$};
\node[anchor = south] at (0.2*\x3, \x1) {$C$};
\node[anchor = north] at (0.8*\x3, \x1+\x2) {$C$};
\node[anchor = north] at (0.2*\x3, 0) {$L_0$};
\node[anchor = south] at (0.8*\x3, 1.9*\x1+\x2) {$L_1$};
\node[anchor = south] at (0.2*\x3, 1.4*\x1+\x2) {$\RFF{1}$};
\node[anchor = west] at (0.45*\x3, 0.5*\x1) {$M$};
\node[anchor = center] at (0.8*\x3, 1.5*\x1+\x2) {$M$};

\end{tikzpicture}

\vspace{0cm}
\noindent \hypertarget{fig2}{FIGURE 2.}
\vspace{.2cm}
\end{center}
\end{minipage}
\end{center}

\noindent The second map $PSS$ is the relative version of the famous map of Piunikhin-Salamon-Schwarz \cite{PSS} and is constructed in \cite{BCP}. It is defined by counting configurations shown in Figure 3, where the arrows indicate Morse trajectories of $h$, the shaded regions, except the rightmost one, pseudoholomorphic disks in $\CO$ bounding $\RF$, and the rightmost shaded region a pseudoholomorphic once-punctured disk in $\CO$ bounding $\RF$ converging to an input \Ham chord at the puncture.

\begin{center}
\vspace{.3cm}
\begin{tikzpicture}
\tikzmath{\x1 = 2; \x2 = 1.5; \x3=1.2; \x4=1.2; \x5=1.8; \x6=1.2; \x7=1; \x8=0.75; \x9=1.2;}
\tikzmath{\y1=0.3;\y2=.45; \y3= \x1+2*\x2+\x3+\x4+\x5+\x6+\x8+\x9+\y1;}

\draw [line width=\y2mm] (0,0) -- (0.5*\x1,0) ;
\draw [->, line width=\y2mm] (\x1,0) -- (0.5*\x1,0) ;

\draw [line width=\y2mm] (\x1+\x2,0) -- (\x1+\x2+0.5*\x3,0) ;
\draw [->, line width=\y2mm] (\x1+\x2+\x3,0) -- (\x1+\x2+0.5*\x3,0) ;

\draw [line width=\y2mm] (\x1+2*\x2+\x3,0) --  (\x1+2*\x2+\x3+0.5*\x4,0) ;
\draw [->, line width=\y2mm] (\x1+2*\x2+\x3+\x4,0) -- (\x1+2*\x2+\x3+0.5*\x4,0)  ;

\node at  (\x1+2*\x2+\x3+\x4+0.25*\x5,0) [circle,fill,inner sep=1pt]{};
\node at  (\x1+2*\x2+\x3+\x4+0.5*\x5,0) [circle,fill,inner sep=1pt]{};
\node at  (\x1+2*\x2+\x3+\x4+0.75*\x5,0) [circle,fill,inner sep=1pt]{};

\draw [line width=\y2mm] (\x1+2*\x2+\x3+\x4+\x5,0) --  (\x1+2*\x2+\x3+\x4+\x5+0.5*\x6,0) ;
\draw [->, line width=\y2mm] (\x1+2*\x2+\x3+\x4+\x5+\x6,0) -- (\x1+2*\x2+\x3+\x4+\x5+0.5*\x6,0) ;

\draw [->, line width=\y2mm]  (\y3,-0.5*\x7) -- (\y3,0.5*\x7);
\draw [line width=0.2mm] (-0.1,0) ellipse (0.1 and 0.1);

\draw [line width=\y2mm, pattern=north east lines, pattern color=gray] (\x1+0.5*\x2,0) ellipse (0.5*\x2 and 0.5*\x2);
\draw [line width=\y2mm, pattern=north east lines, pattern color=gray] (\x1+1.5*\x2+\x3,0) ellipse (0.5*\x2 and 0.5*\x2);

\draw [line width=\y2mm, pattern=north east lines, pattern color=gray]  (\y3,0.5*\x7) -- (\y3-\y1,0.5*\x7)  to[out=178,in=-2] (\y3-\y1-\x9,\x8) arc (90:270:\x8) to[out=2,in=182]  (\y3-\y1,-0.5*\x7) -- (\y3,-0.5*\x7) -- cycle;
\end{tikzpicture}

\vspace{.3cm}
\noindent \hypertarget{fig3}{FIGURE 3.}
\end{center}
\noindent Notice that the target of $Y$ and the source of $PSS$ can be made chain-level isomorphic by choosing the \Ham for the pair $(\RFF{0},\RFF{0})$ to be $X\mapsto \langle X,a\rangle$. It follows that $PSS\circ Y$ is well-defined. This map will be useful when we prove Theorem \ref{main}.

\subsection{Perturbation data: Hamiltonians} \label{perturbdataH}
We introduce a quadratic \Ham
\[
\begin{array}{cccl}
H: &M&\ra& \RR\\
&[g:Y] &\mapsto & \frac{1}{2} |Y|^2
\end{array}.
 \]
Put $p_0:=eK\in G/K$ and choose a generic point\footnote{It is proved in \cite{BS} that (I) and (II) hold generically. As for (III), one applies Sard's theorem to the canonical map $\sq^{-1}(\RF)\ra G/K$.} $p_1\in G/K$ satisfying
\begin{condition}\label{nondeg} $~$
\begin{enumerate}[(I)]
\item every geodesic from $p_0$ to $p_1$ is non-degenerate;
\item $p_0$ and $p_1$ are contained in a unique maximal torus in $G/K$; and
\item the Lagrangians $\RFF{0}:=\RFF{p_0}$ and $\RFF{1}:=\RFF{p_1}$ intersect transversely.
\end{enumerate}
\end{condition}
\noindent Using the $K$-action on $G/K$, we may assume $p_1\in T_0$ so that $T_0$ is the unique maximal torus in $G/K$ containing $p_0$ and $p_1$, by (II). Write $p_1=\exp(a)K$ for some $a\in \t_0$.

Put $L_0:=L_{p_0}$ and $L_1:=L_{p_1}$. For $i=0,1$, denote by $\ul{L}_i$ either $(L_i,C)$ or $\RFF{i}$. We define the Floer cochain complexes $CW_b^*(L_0,L_1)$ and $Hom_{\AA}^*(\ul{L}_0,\ul{L}_1)$ using the (generalized) Hamiltonians $H_t^M:=H$ and
\[\ul{H}=(H_t^{M,-},H_t^{\CO},H_t^{M,+}):=(H,0,H)\]
respectively. Denote by $\X(L_0,L_1)$ the set of time-1 \Ham chords of $H_t^M$ from $L_0$ to $L_1$, and by $\X(\ul{L}_0,\ul{L}_1)$ the set of time-$(\delta_0,1,\delta_1)$ generalized \Ham chords of $\ul{H}$ for the cyclic set $(\ul{L}_0,(\ul{L}_1)^T)$ of Lagrangian correspondences, where $\delta_i=\frac{1}{2}$ or $0$, depending on whether $\ul{L}_i$ is equal to $(L_i,C)$ or $\RFF{i}$. Recall the Weyl group $W:=W_{X_0}$ and the lattice $Q^{\vee}\subset \t_0$ (Section \ref{Lie-basic} and \ref{Lie-sym}).

\begin{lemma}
\begin{enumerate}
\item[]
\item \[\X(L_0,L_1) = \{x_q|~q\in Q^{\vee}\}\]
where $x_q:[0,1]\ra M$ is defined by
\[ x_q(t):= [ \exp(t(q+a)):q+a] .\]
\item \[ \X(\ul{L}_0,\ul{L}_1) = \{\ul{x}_w|~w\in W\} \]
where $\ul{x}_w=(x_{w,-},x_{w,\CO},x_{w,+})$ with
\[ x_{w,-}:[0,\delta_0]\ra M,~\quad x_{w,\CO}:[0,1]\ra \CO,\quad x_{w,+}:[0,\delta_1]\ra M \]
such that $x_{w,\CO}\equiv wX_0$ and $x_{w,-}$ (resp. $x_{w,+}$) is the time-$\delta_0$ (resp. time-$\delta_1$) \Ham chord of $H$ which starts (resp. ends) at the unique intersection point $\sq^{-1}(wX_0)\cap L_0$ (resp. $\sq^{-1}(wX_0)\cap L_1$).
\end{enumerate}
Moreover, these chords are non-degenerate.
\end{lemma}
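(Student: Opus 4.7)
The plan is to unpack each side via the cogeodesic flow description of the Hamiltonian flow, and then identify the chords with intersection data.

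For (1), $H=\frac{1}{2}|Y|^2$ is the kinetic-energy Hamiltonian associated to the metric on $G/K$, so its flow on $M=T^*(G/K)$ is the cogeodesic flow, and time-$1$ $H$-chords from $L_0$ to $L_1$ correspond bijectively to geodesics from $p_0$ to $p_1$ in $G/K$. Every such geodesic has the form $\gamma(t)=\exp(tX)K$ for some $X\in\pp\simeq T_{p_0}(G/K)$; picking a maximal abelian $\t_X\subseteq\pp$ containing $X$ and invoking Lemma \ref{flattorus}, the geodesic lies entirely in the totally geodesic maximal torus $T_X=\exp(\t_X)K$, which therefore contains both $p_0$ and $p_1=\exp(X)K$. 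Condition \ref{nondeg}(II) forces $T_X=T_0$, hence $X\in\t_0$. Using that $\t_0$ is abelian, $\exp(X)K=\exp(a)K$ reduces to $\exp(X-a)\in K$, i.e.\ $q:=X-a\in Q^{\vee}$; lifting $\gamma$ to $T^*(G/K)$ via the metric identification yields $x_q$. Non-degeneracy of $x_q$ as a Hamiltonian chord is equivalent to non-degeneracy of the corresponding geodesic in the usual Riemannian sense (comparison of the Hessians of the symplectic action functional and of the energy on $\Omega(G/K)$), which is Condition \ref{nondeg}(I).

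For (2), since $H^{\CO}\equiv 0$ the middle component $x_{w,\CO}$ is necessarily constant, say $\equiv y\in\CO$. When $\ul{L}_i=\RFF{i}$, the boundary condition yields $y\in\RFF{i}$ directly. When $\ul{L}_i=(L_i,C)$, the outer $M$-chord has the cogeodesic form $x_{w,\pm}(t)=[g_i\exp(tv):v]$, and the seam condition $(x_{w,\pm}(\delta_i),y)\in C$ unpacks to $v\in\RF$ together with $g_i\exp(\delta_iv)\cdot v=y$; since $\Ad(\exp(\delta_iv))v=v$, this simplifies to $y=g_i\cdot v\in g_iK\cdot X_0=\RFF{i}$. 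In all four configurations one therefore has $y\in\RFF{0}\cap\RFF{1}$.

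I claim $\RFF{0}\cap\RFF{1}=W\cdot X_0$. The easy inclusion $WX_0\subseteq\RFF{0}\cap\RFF{1}$ follows from Lemma \ref{Liewk2}(1), which realises $W$-representatives in $\norm{\t_0}{K}\subseteq K$ so that $WX_0\subseteq\RF=\RFF{0}$, together with the fact that $\Ad(\exp(a))$ is the identity on $\t_0\supseteq WX_0$, so $WX_0=\exp(a)\cdot WX_0\subseteq\exp(a)\RF=\RFF{1}$. For the reverse inclusion, writing $y=k_1X_0=\exp(a)k_2X_0$ gives $k_2^{-1}\exp(-a)k_1\in\cent{X_0}{G}=\exp(\t_0)\cdot\cent{X_0}{K}$ by Lemma \ref{Liewk2}(2); combined with the Cartan-type decomposition $G=K\exp(\wc_{X_0})K$, this forces $k_1\in\norm{\t_0}{K}\cdot\cent{X_0}{K}$ modulo the centraliser, hence $y\in WX_0$ by Lemma \ref{Liewk2}(1). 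Once $y=wX_0$ is fixed, the fibre description \eqref{fiber} shows $\sq^{-1}(wX_0)\cap\C$ is a single $\exp(\t_0)$-coset of the form $\{[w\exp(t):X_0]:t\in\t_0\}$, all of whose elements represent the same cotangent vector $wX_0$ at the base point; so its intersection with each $L_i$ is a single point, and Condition \ref{nondeg}(III) delivers transversality, hence non-degeneracy of $\ul{x}_w$.

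The hard step is the reverse inclusion $\RFF{0}\cap\RFF{1}\subseteq WX_0$: transversality from Condition \ref{nondeg}(III) yields only finiteness, so one really must exploit the group-theoretic decomposition $\cent{X_0}{G}=\exp(\t_0)\cdot\cent{X_0}{K}$ of Lemma \ref{Liewk2}(2) together with the essential feature that $a\in\t_0$ lies in the centraliser of $X_0$. Everything else is a direct unpacking of the cogeodesic flow and of the definitions of $C$, $\sq$ and $\C$.
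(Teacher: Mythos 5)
Your part (1), the reduction of (2) to the identity $\RFF{0}\cap\RFF{1}=W\cdot X_0$ via constancy of the $\CO$-component and the seam conditions, the easy inclusion, and the attribution of non-degeneracy to Conditions \ref{nondeg}(I) and (III) all follow the paper's route and are fine. The genuine gap sits exactly at the step you yourself call the hard one, namely $\RFF{0}\cap\RFF{1}\subseteq W\cdot X_0$. Your deduction uses only Lemma \ref{Liewk2}(2) and a Cartan-type decomposition and never invokes Condition \ref{nondeg}(II); but the identity is false without a genericity hypothesis on $a$: if $\exp(a)\in K$ (so $p_1=p_0$, which of course violates (II)), then $\RFF{1}=\exp(a)K\cdot X_0=K\cdot X_0=\RFF{0}$ and the intersection is all of $\RF$, which is strictly larger than $W\cdot X_0$ as soon as $\RF$ is not a point; yet your premises still hold in that situation, since for any $k_1\in K$ one may take $k_2:=\exp(-a)k_1$, giving $k_2^{-1}\exp(-a)k_1=e\in\cent{X_0}{G}$. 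Hence the inference ``combined with $G=K\exp(\wc_{X_0})K$ this forces $k_1\in\norm{\t_0}{K}\cdot\cent{X_0}{K}$'' cannot be valid as stated (in addition, the Cartan decomposition requires the closed chamber $\ol{\wc}_{X_0}$, and ``modulo the centraliser'' is left undefined). The feature you single out as essential, that $a$ centralises $X_0$, only powers the easy inclusion; the reverse inclusion genuinely needs Condition \ref{nondeg}(II), which is precisely what the paper cites.

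Here is the missing finish, which is what ``immediate consequence of Condition \ref{nondeg}(II) and Lemma \ref{Liewk2}(1)'' amounts to. Write $y=k_1\cdot X_0=\exp(a)k_2\cdot X_0$ and, as you did, $k_2^{-1}\exp(-a)k_1=\exp(t)c$ with $t\in\t_0$, $c\in\cent{X_0}{K}$. Since $c$ fixes $X_0\in U$ and preserves $\pp$, uniqueness of the maximal torus through $X_0$ gives $\Ad(c)\t_0=\t_0$; therefore $k_1^{-1}\exp(a)K=c^{-1}\exp(-t)k_2^{-1}K=\exp(-\Ad(c^{-1})t)K\in T_0$, i.e.\ $p_1\in k_1\cdot T_0$. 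As also $p_0\in k_1\cdot T_0$, Condition \ref{nondeg}(II) forces $k_1\cdot T_0=T_0$; since $k_1$ fixes $p_0$, differentiating at $p_0$ gives $\Ad(k_1)\t_0=\t_0$, so $k_1\in\norm{\t_0}{K}$ and Lemma \ref{Liewk2}(1) yields $y=k_1\cdot X_0\in W\cdot X_0$. (A small separate point: your sentence that all elements of $\sq^{-1}(wX_0)$ ``represent the same cotangent vector at the base point'' is garbled, as they lie over different base points; what you actually need, and what is true, is that the intersection with each fixed fiber $L_i$ is a single point, e.g.\ $\sq^{-1}(wX_0)\cap L_0=\{[w:X_0]\}$ because $\exp(q)$ acts trivially on $\t_0$ for $q\in Q^{\vee}$.)
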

\begin{proof}
(1) follows from Condition \ref{nondeg}(II), Lemma \ref{flattorus} and the fact that every geodesic in $G/K$ is contained in a maximal torus. (2) follows from the facts that the \Ham flow of $H$ commutes with the symplectic quotient map $\sq:\C\ra\CO$ and
\[\RFF{0}\cap\RFF{1}=\{wX_0|~w\in W\},\]
which is an immediate consequence of Condition \ref{nondeg}(II) and Lemma \ref{Liewk2}(1). The non-degeneracy of these chords follows from Condition \ref{nondeg}(I) and (III).
\end{proof}

\noindent Let $\ul{x}_w\in\X(\ul{L}_0,\ul{L}_1)$. For any $q\in Q^{\vee}$, define $u_q\equiv wX_0$ and $\g_q$ to be any path in $\sq^{-1}(wX_0)$ such that the path $x_{w,-}\#\g_q\# x_{w,+}$ is continuous and its lift in the universal cover $T^*\t_0\ra T^*T_0$ is homotopic rel endpoints to $t\mapsto (t(q+a),wX_0)$. Then $(u_q,\g_q)$ is a \capping disk for $\ul{x}_w$.

\begin{lemma}\label{allmcd} $[u_q,\g_q],~q\in Q^{\vee}$ represent all homotopy classes of \capping disks for $\ul{x}_w$.
\end{lemma}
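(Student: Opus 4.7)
The plan is to show that every capping disk is homotopic to one with $u$ constant at $wX_0$, which reduces the classification to that of paths in the torus fiber $\sq^{-1}(wX_0)\simeq T_0 = \t_0/Q^{\vee}$ between two fixed endpoints; these are then labelled by $Q^{\vee}$ via lifts to the universal cover $\t_0$.

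For the reduction step, I rely on the fact that $\sq:\C\to\CO$ is a fiber bundle with fiber $T_0$. This follows from Proposition \ref{comparesymplform}(3) together with the Hamiltonian torus action remarked upon afterwards, and is confirmed by a direct calculation using Lemma \ref{Liewk2}(2): the fiber $\sq^{-1}(wX_0)$ equals $\{[\exp(Z):wX_0]\mid Z\in\t_0\}\simeq\t_0/Q^{\vee}$. The half-disk $\DH$ deformation retracts onto its diameter $[-1,1]$ through a homotopy which fixes $[-1,1]$, and precomposing $u$ with this retraction gives a homotopy $u_s:\DH\to\CO$ from $u_0=u$ to the constant map $u_1\equiv wX_0$ satisfying $u_s|_{[-1,1]}\equiv wX_0$ for all $s$. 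Applying the relative homotopy lifting property of the fibration $\sq$ with respect to $\{\pm 1\}\hookrightarrow\pt_a\DH$ --- noting that $u_s(\pm 1)=wX_0$ is constant in $s$, so keeping $\g(\pm 1)$ fixed is consistent with the lifting --- I extend the initial lift $\g_0=\g$ to a homotopy $\g_s$. Then $(u_s,\g_s)$ is a homotopy of capping disks ending at some $(u_1,\g_1)$ with $u_1\equiv wX_0$ and $\g_1$ a path in $\sq^{-1}(wX_0)=T_0$.

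For the classification step, two paths in $T_0$ with common endpoints are homotopic rel endpoints if and only if their lifts to $\t_0$ (from a common initial lift) have the same terminal endpoint, so such path-homotopy classes form a $Q^{\vee}$-torsor. The reference paths $\g_q$ are defined precisely so that the lift of $x_{w,-}\#\g_q\#x_{w,+}$ to the universal cover $T^*\t_0\to T^*T_0$ ends at $(a+q,wX_0)$, with consecutive values differing by elements of $Q^{\vee}$. Combining this with the reduction step shows that $\{[u_q,\g_q]\}_{q\in Q^{\vee}}$ represents every homotopy class of capping disks for $\ul x_w$, and different $q$'s give distinct classes because their lift-endpoints in $\t_0$ are distinct.

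The main technical point is the relative homotopy lifting used in the reduction; I would verify it by pulling back the $T_0$-bundle $\sq$ along the map $(u_s)_{s\in[0,1]}:\DH\times[0,1]\to\CO$, then trivializing the pullback over the contractible base and reducing to a straightforward extension problem for maps into $T_0$.
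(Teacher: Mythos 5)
Your proof is correct and takes essentially the same route as the paper, whose entire proof is the one-line invocation of the homotopy lifting property of the fibration $\sq:\C\to\CO$; you have simply supplied the details it leaves implicit (the retraction of $\DH$ onto the diameter $[-1,1]$, the relative lifting keeping $\g(\pm 1)$ fixed, and the identification of fiberwise path classes with a $Q^{\vee}$-torsor). Note that the lemma only asserts that the $[u_q,\g_q]$ represent \emph{all} classes, so your final distinctness remark, while true, is not needed here.
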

\begin{proof}
It follows from the homotopy lifting property of the fibration $\sq:\C\ra\CO$.
\end{proof}

\begin{corollary} Put $y_{w,q}:= (\ul{x}_w,[u_q,\g_q])$. We have
\begin{align*}
CW_b^*(L_0,L_1) & = \bigoplus_{q\in Q^{\vee}}\coeff \langle x_q\rangle\\
Hom_{\AA}^*(\ul{L}_0,\ul{L}_1) &= \bigoplus_{(w,q)\in W\times Q^{\vee}}\coeff\langle y_{w,q}\rangle.
\end{align*}
\end{corollary}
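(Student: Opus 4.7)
The plan is to deduce the Corollary directly from the two results that immediately precede it, together with the definitions of the two Floer cochain complexes in question.

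For the first identification, $CW_b^*(L_0,L_1)$ is by construction the free $\coeff$-module on the set of non-degenerate time-$1$ Hamiltonian chords from $L_0$ to $L_1$ with respect to $H^M_t = H$. The preceding lemma, part (1), exhibits a bijection $Q^{\vee} \leftrightarrow \X(L_0,L_1)$ sending $q \mapsto x_q$, and the non-degeneracy of each $x_q$ is also asserted there. Thus the underlying module of $CW_b^*(L_0,L_1)$ is $\bigoplus_{q \in Q^{\vee}} \coeff \langle x_q \rangle$, which is the first claim.

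For the second identification, Definition \ref{morphismspace} states that $Hom_{\AA}^*(\ul{L}_0,\ul{L}_1)$ is freely generated over $\coeff$ by pairs $(\ul{x},[u,\g])$ where $\ul{x}$ ranges over $\X(\ul{L}_0,\ul{L}_1)$ and $[u,\g]$ over homotopy classes of capping disks for $\ul{x}$. By part (2) of the preceding lemma, $\X(\ul{L}_0,\ul{L}_1)$ is in bijection with $W$ via $w \mapsto \ul{x}_w$, so it suffices to fix $w$ and enumerate the homotopy classes of capping disks for $\ul{x}_w$. Lemma \ref{allmcd} already guarantees that the classes $[u_q,\g_q]$, $q \in Q^{\vee}$, exhaust all such classes, so what remains is to verify that the assignment $q \mapsto [u_q,\g_q]$ is injective.

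Injectivity is essentially built into the construction of the $\g_q$ via the universal covering $T^*\t_0 \to T^*T_0$: the path $x_{w,-} \# \g_q \# x_{w,+}$ was defined precisely so that its lift starting at the origin is homotopic rel endpoints to $t \mapsto (t(q+a), wX_0)$, and these lifts have distinct endpoints for distinct $q \in Q^{\vee}$. Two homotopic capping disks would yield homotopic continuous boundary paths, hence homotopic lifts with equal endpoints, forcing the corresponding $q$'s to coincide. Combining this bijection $Q^{\vee} \leftrightarrow \{[u_q,\g_q]\}$ with the bijection $W \leftrightarrow \X(\ul{L}_0,\ul{L}_1)$ and setting $y_{w,q} := (\ul{x}_w,[u_q,\g_q])$ gives the second claim. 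The only step requiring any care is the injectivity above, and it is immediate from the homotopy lifting property of $\sq : \C \to \CO$ used in Lemma \ref{allmcd}.
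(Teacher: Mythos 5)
Your proposal is correct and follows exactly the route the paper intends: the corollary is an immediate unpacking of the preceding lemma (parts (1) and (2)) together with Lemma \ref{allmcd} and Definition \ref{morphismspace}, and your extra verification that distinct $q\in Q^{\vee}$ give non-homotopic capping disks (via distinct endpoints of the lifts in the universal cover $T^*\t_0\ra T^*T_0$) is the right way to complete the enumeration.
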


We also consider the cochain complex $QC^*(\RF)$ where the Morse function is taken to be $h:=\langle -,a \rangle$. It is straightforward to see that $h$ is indeed a Morse function with
\[\crit(h)=\t_0\cap\RF=\{wX_0|~w\in W\}.\]
Moreover, the set of homotopy classes of \capping disks for each of its critical points is determined in exactly the same way as in Lemma \ref{allmcd}. By an abuse of notation, we denote the generators $(wX_0,[u_q,\g_q])$ by $y_{w,q}$ so that
\[ QC^*(\RF) = \bigoplus_{(w,q)\in W\times Q^{\vee}}\coeff\langle y_{w,q}\rangle.\]

\begin{proposition} \label{nodiffprop} Suppose we are in one of the following situations:
\begin{enumerate}
\item The multiplicity $\ma$ is even for any $\a\in\Ra{X_0}$. This holds in the group case or when $(G,K)$\footnote{For simplicity, only irreducible symmetric pairs $(G,K)$ with $G$ simply connected are listed.} is one of
\[(SU(2n),Sp(n)),\quad (Spin(2n),Spin(2n-1)),\quad (E_6,E_4).\]
See \cite{Araki}.
\item The coefficient ring $\coeff$ is equal to $\ZZ_2$.
\end{enumerate}
Then the differentials of $CW_b^*(L_0,L_1), Hom_{\AA}^*(\ul{L}_0,\ul{L}_1)$ and $QC^*(\RF)$ vanish.
\end{proposition}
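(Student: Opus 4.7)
The plan is to equip each of the three complexes with its Maslov/Conley--Zehnder $\mathbb{Z}$-grading (normalized via the capping disks of Definition \ref{mcd}) and then establish vanishing of the degree-$1$ differential by parity in case (1), and by a Morse--Floer identification together with the Bott--Samelson basis theorem in case (2). First I would make the grading on the generators $x_q$ and $y_{w,q}$ explicit, using the classical interpretation of the wrapped Floer index as a Morse index on the path space and the pearl index as (Morse index) $+$ (Maslov index of capping disk).

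For case (1), the degree of $x_q \in CW_b^*(L_0, L_1)$ equals the Morse index of the geodesic $t \mapsto \exp(t(q+a))K$ from $p_0$ to $p_1$ in $G/K$. By Bott's classical index theorem for geodesics in a symmetric space, this index is $\sum_{\alpha \in \Ra{X_0}} m_\alpha N_\alpha(q)$ for non-negative integers $N_\alpha(q)$ counting the focal contribution of each restricted root $\alpha$ along the geodesic; when every $m_\alpha$ is even, the total is even. For $y_{w,q} \in QC^*(\RF)$ I would split the degree as (Morse index of $h$ at $wX_0$) $+$ (Maslov index of $[u_q, \g_q]$). The first term can be read off from the root-space decomposition \eqref{rootsp} of $T_{wX_0} \RF$ relative to the gradient $\nabla h = a \in \t_0$ and is a sum of terms $m_\alpha$, hence even. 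For the second term, Proposition \ref{admiss} reduces it to a multiple of the symplectic area, which by the decomposition of the normal bundle $\mathcal{N}_{\RF/\CO}$ into root summands of rank $m_\alpha$ is again a sum of multiples of $m_\alpha$, hence even. The quilted complex $Hom_{\AA}^*(\ul{L}_0, \ul{L}_1)$ is treated by splitting the degree into contributions from $x_{w,-}$, $x_{w,\CO}$, $x_{w,+}$ and the capping disk, each of which is even by the preceding considerations.

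For case (2), my plan is to invoke the chain-level Abbondandolo--Schwarz--Abouzaid isomorphism to identify $CW_b^*(L_0, L_1; \mathbb{Z}_2)$ with the Morse complex of the energy functional on the path space $\mathcal{P}_{p_0, p_1}(G/K)$, and then observe that the Bott--Samelson cycles give a $\mathbb{Z}_2$-basis of $H_*(\Omega(G/K); \mathbb{Z}_2)$ of cardinality equal to the number of critical geodesics; a rank count then forces the differential to vanish. The corresponding arguments for $QC^*(\RF)$ and $Hom_{\AA}^*(\ul{L}_0, \ul{L}_1)$ proceed similarly via the Bott--Samelson basis of $H^*(\RF; \mathbb{Z}_2)$, after identifying the pearl and quilted differentials with Morse-type differentials by ruling out non-constant holomorphic disk contributions for the specific Hamiltonian $H$ and Morse function $h = \langle -, a\rangle$ of Section \ref{perturbdata}. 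The principal obstacle I anticipate is precisely this last step — the chain-level Morse/Floer comparison in the pearl and quilted settings — which requires an energy--index argument showing that positive-area disk bubbles cannot occur in the relevant index range. The case-(1) Maslov computation for capping disks is also delicate, because Remark (3) in the Introduction identifies the relevant homotopy classes with a quotient of $\pi_2(\CO, \RF)$ by $\pi_2(G/K)$, so one must check that passing to this quotient respects the parity.
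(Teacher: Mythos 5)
Your case (1) is in essence the paper's argument: the paper simply observes that the strip and triple-strip differentials are governed by index formulae of the same shape as \eqref{Index}, i.e.\ differences of sums of terms $\ma\lfloor 2\a(\cdot)\rfloor$, so when every $\ma$ is even no index-one solution can exist; your plan of computing absolute gradings (Bott's index theorem for the geodesics, Morse index of $h$, Maslov index of the capping disks, which by Lemma \ref{monocapdisk} is $-2\sum\ma\a(q)$) reaches the same parity conclusion with more work, and your worry about the quotient by $\pi_2(G/K)$ is harmless since the capping classes are parametrized by $Q^\vee$ (Lemma \ref{allmcd}). Likewise, your rank-count for $CW_b^*(L_0,L_1;\ZZ_2)$ via the Bott--Samelson basis and the affine Weyl group combinatorics is exactly the paper's case (2) argument for that complex, and the paper reduces $Hom_{\AA}^*(\ul{L}_0,\ul{L}_1)$ to $QC^*(\RF)$ by noting they are quasi-isomorphic with the same graded generators, rather than re-running a quilted Morse/Floer comparison as you propose.

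The genuine gap is the step you yourself flag as the principal obstacle: killing the quantum corrections to the pearl differential of $QC^*(\RF)$. The Bott--Samelson rank count only controls the classical part (it shows the Morse differential of $h$ vanishes mod $2$, since the Morse complex computes $H^*(\RF;\ZZ_2)$), because the pearl complex computes the quantum, not the classical, cohomology; and an ``energy--index argument ruling out positive-area disks in the relevant index range'' cannot work here. By monotonicity (Proposition \ref{admiss}) and the minimal Maslov number being two, a pearly configuration consisting of one Maslov-two disk together with Morse pieces whose index increases by one has total index one and fixed small positive area; nothing in the energy or index bookkeeping forbids such configurations, which is precisely why a separate mechanism is needed. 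The paper's mechanism is the standard anti-symplectic involution argument: $-D\involute$ is an anti-symplectic involution of $(\CO,\w_{\CO})$ with $\RF$ a component of its fixed locus (Lemma \ref{orientable}), and for suitably (anti-)invariant almost complex structures the holomorphic disks contributing to the quantum part come in pairs exchanged by the involution, so their count vanishes over $\ZZ_2$. Without this (or some substitute, e.g.\ the ``degree argument'' the paper's remark mentions for special cases), your case (2) proof for $QC^*(\RF)$, and hence for $Hom_{\AA}^*(\ul{L}_0,\ul{L}_1)$, does not close.
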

\begin{proof}
First notice that the cochain complexes $Hom_{\AA}^*(\ul{L}_0,\ul{L}_1)$ and $QC^*(\RF)$ are quasi-isomorphic and have the same set of generators. It thus suffices to prove the assertion for one of them. The assertion for the first situation follows from the index formulae \eqref{Index} for strips and triple-strips which define the differentials of $CW_b^*(L_0,L_1)$ and  $Hom_{\AA}^*(\ul{L}_0,\ul{L}_1)$ respectively. See Remark \ref{Indstrip}. For the second, recall that Bott-Samelson \cite{BS} constructed explicit bases for $HW_b^*(L_0,L_1)$ (which is isomorphic to $H_{-*}(\O(G/K);\ZZ_2)$) and $H^*(\RF;\ZZ_2)$. One can show that the dimension of each graded piece of these $\ZZ_2$-vector spaces is equal to that of $CW_b^*(L_0,L_1)$ and $QC^*(\RF)$ (without \capping disks) respectively. The key point is that the underlying combinatorics which describe their bases (homology level) and ours (chain level) are the same, namely the affine Weyl group for $\O(G/K)$ and the Weyl group for $\RF$. This shows that (a) the differential of $CW_b^*(L_0,L_1)$ vanishes, and (b) the count of Morse trajectories which contributes to the classical term of the differential of $QC^*(\RF)$ is zero. It remains to show that the quantum corrections for the latter differential is zero as well, and this follows from the standard argument, using the anti-symplectic involution $-D\involute$ on $\CO$. An extra argument is needed here: every pearly trajectory cancels its reflection as \capping disks. This follows from Lemma \ref{cancel} below which is an immediate consequence of Lemma \ref{allmcd}.
\end{proof}

\begin{lemma}  \label{cancel} The map
\[ (u,\g) \mapsto (-D\involute\circ u\circ r, \involute'\circ \g^{-1})\]
descends to the identity of the set of homotopy classes of \capping disks for any point of $\RF$, where $r$ is a suitable reflection on the domain and $\involute':[g:Y]\mapsto [\involute(g):Y]$.
\hfill $\square$
\end{lemma}

\begin{remark} Situation (1) can in fact be more general, as long as the ``degree argument'' works. Examples include all $G/K$, except $S^2$ and $\RR P^2$, which are of rank one, i.e. $\dim\t_0=1$.
\end{remark}

\subsection{Perturbation data: almost complex structures} \label{perturbdataJ}
We first introduce an almost complex structure $J_0$ on $M$. The almost complex structures for Floer theory will be small $C^0$-perturbations of $J_0$ satisfying some extra conditions. Recall (Section \ref{Lie-sym}) we have fixed a $G$-invariant metric on $G/K$ and denoted the associated Levi-Civita connection by $\nabla$. We have the splitting $TM=\Hor\oplus \Ver$ into horizontal and vertical subbundles.

\begin{definition}\label{defineJ} Define an almost complex structure $J_0$ on $M$ by the matrix
\begin{center}
\begin{tabular}{rl}
&\hspace{.2cm} $\Hor$ $~\Ver$ \\ \\ [-1em]
$\Hor$\hspace{-.3cm} & \multirow{2}{*}{
$ \begin{bmatrix}
0 & \id\\ -\id &0
\end{bmatrix}.$
 }\\
$\Ver$\hspace{-.3cm} &
\end{tabular}
\end{center}
\end{definition}

\begin{lemma}\label{J}$~$
\begin{enumerate}
\item $J_0$ is $\w_M$-compatible and of rescaled contact type in the sense of \cite{Ganatra}, i.e.
\[ \l_M\circ J_0 = rdr = dH\]
where $r:[g:Y]\mapsto |Y|$ is the length function.
\item The subbundles $\UU_0$ and $\UU_{\a}$ (Definition \ref{Ua}) are preserved by $J_0$.
\item If $T$ is a maximal torus in $G/K$, then $J_0$ preserves the tangent spaces of $T^*T$ and restricts to minus the standard one, i.e. $J_0$ pull-backs to $-J_{std}$ on the universal cover $T^*\RR^r\simeq \CC^r$ ($r=\dim T$) where the real part is identified with the zero section.

\end{enumerate}
\end{lemma}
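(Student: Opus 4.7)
My plan is to verify Lemma \ref{J} by direct computation, exploiting two formulas already established in Section \ref{Lie-sym}: the description $\Hor = \{\pi_*(g_*\eta, 0) \mid \eta \in \pp\}$ from Lemma \ref{symwk1}(1), and the expressions $\pi^*\lambda_M = \langle \beta, Y \rangle$ together with $\pi^*\omega_M = -\tfrac{1}{2}\langle[\beta, \beta], Y\rangle - \langle \beta, dY\rangle$ from the proof of Lemma \ref{rootspglobal}. I would identify both $\Hor$ and $\Ver$ with $\pp$ at each point $[g:Y]$, so that a general tangent vector takes the uniform form $\xi = \pi_*(g_*\eta, \zeta)$ with $\eta, \zeta \in \pp$; under this identification, Definition \ref{defineJ} becomes the single rule $J_0 \xi = \pi_*(g_*\zeta, -\eta)$, and the three statements each reduce to a short algebraic check.

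For part (1), the key observation is that $[\pp, \pp] \subseteq \kk$ is orthogonal to $Y \in \pp$, so the $[\beta, \beta]$-term in $\pi^*\omega_M$ vanishes when evaluated on two vectors of the above form, leaving $\pi^*\omega_M(\xi_1, \xi_2) = \langle \eta_2, \zeta_1\rangle - \langle \eta_1, \zeta_2\rangle$. From this one reads off both $J_0$-invariance and positive definiteness of $\omega_M(\cdot, J_0\cdot)$ immediately. For the rescaled contact-type identity I would compute $\lambda_M(\pi_*\xi) = \langle \eta, Y\rangle$, hence $\lambda_M \circ J_0(\xi) = \langle \zeta, Y\rangle$; on the other hand, since $\nabla$ is Riemannian, parallel transport preserves the fiber norm, so $dH$ vanishes on $\Hor$ and equals $\langle Y, \zeta\rangle$ on $\pi_*(0,\zeta)$, matching $\lambda_M \circ J_0$. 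The equality $dH = r\,dr$ is immediate from $H = r^2/2$.

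Parts (2) and (3) I would handle by the same mechanism. Each of $\UU_0$ and $\UU_\alpha$ in Definition \ref{Ua} takes the form $V^h \oplus V^v$ where $V$ is a fixed subspace of $\pp$ (namely $\t_Y$ or $\pa{Y}$), and since $J_0$ swaps $V^h$ with $V^v$ up to sign, it preserves any such sum. The same reasoning handles $T(T^*T)$ for a maximal torus $T \subset G/K$: because $T$ is totally geodesic (Lemma \ref{flattorus}), horizontal and vertical parts of $T(T^*T)$ are precisely the horizontal and vertical lifts of $TT$, so $T(T^*T) = \t_0^h \oplus \t_0^v$, which is $J_0$-stable. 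For the identification with $-J_{std}$ on the universal cover $T^*\t_0 \simeq \t_0 \otimes \CC$, I would again use flatness of the induced connection on $T$ (Lemma \ref{flattorus}) to recognize horizontal vectors as $\partial_q$-directions and vertical vectors as $\partial_p$-directions; the matrix of Definition \ref{defineJ} then gives $\partial_q \mapsto -\partial_p$ and $\partial_p \mapsto \partial_q$, which under $q + ip \leftrightarrow \CC$ is precisely $-J_{std}$.

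I do not foresee any substantive obstacle; the only mild subtleties are confirming that no $[\beta, \beta]$-contribution survives in the compatibility check of part (1), and bookkeeping the sign convention carefully in part (3) so as to land on $-J_{std}$ rather than $+J_{std}$.
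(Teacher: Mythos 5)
Your proposal is correct and follows essentially the paper's own (very terse) proof: part (1) is the standard computation you spell out via the Maurer--Cartan expressions for $\l_M$ and $\w_M$, and part (2) is immediate from Definition \ref{Ua} exactly as you say. For part (3) you use the totally-geodesic-plus-flat argument via Lemma \ref{flattorus}, which is the alternative route the paper itself offers (its primary phrasing deduces (3) from (2) using $\ol{\UU}=M$), so there is no substantive difference.
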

\begin{proof}
(1) is standard. In fact, it holds for any metric on the base. (2) simply follows from the definitions of $\UU_0$ and $\UU_{\a}$. (3) is a special case of (2) since $\ol{\UU}=M$ and $\UU\cap T^*T$ is the disjoint union of a finite number of the leaves of the distribution $\UU_0$. It can also be proved by noticing that $T$ is a totally geodesic submanifold and the induced metric on $T$ is flat (Lemma \ref{flattorus}).
\end{proof}

\begin{corollary} \label{fcnpositive} Consider the (degenerate) 2-form $\w'':=\phi^*(\left.\w_M\right|_{\C})$ on $\UU$. The function
\[
\begin{array}{ccl}
T\UU &\ra& \RR\\
v&\mapsto & \w''(v,J_0 v)
\end{array}
 \]
is non-negative and vanishes precisely on the tangent spaces of the fibers of $\pj$.
\end{corollary}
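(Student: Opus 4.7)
The plan is to exploit the orthogonal decomposition $T\UU = \UU_0 \oplus \bigoplus_{\a\in\Ra{X_0}}\UU_\a$ supplied by Lemma \ref{rootspglobal}, together with the compatibility of $J_0$ with this decomposition from Lemma \ref{J}(2) and the pointwise formulas for $(\pj)^*\w_\CO$ from Proposition \ref{comparesymplform}(2). Since $\C$ is coisotropic and $\sq$ is the symplectic quotient map (Proposition \ref{comparesymplform}(3)), one has $\w_M|_\C = \sq^*\w_\CO$, so $\w'' = (\pj)^*\w_\CO$; this is the identification I would use throughout.

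First I would show that both $\w_M$ and $\w''$ orthogonalize the decomposition of $T\UU$ in a compatible way. The symplectic orthogonality of the direct sum for $\w_M$ is exactly Lemma \ref{rootspglobal}. For $\w''$, the summands $\UU_{\a_1}$ and $\UU_{\a_2}$ with $\a_1 \ne \a_2$ are orthogonal by \eqref{adiff}, while $\UU_0$ lies in the kernel of $(\pj)_*$ by Proposition \ref{comparesymplform}(1) and hence in the kernel of $\w'' = (\pj)^*\w_\CO$. Writing $v = v_0 + \sum_\a v_\a$ and using that $J_0$ preserves each summand, it follows that
\[ \w''(v, J_0 v) = \sum_{\a\in\Ra{X_0}} \w''(v_\a, J_0 v_\a). \]

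Next I would evaluate each term via \eqref{aequal}, which gives
\[ \w''(v_\a, J_0 v_\a) = \frac{\a(\phi_0(Y))}{\a(Y)}\,\w_M(v_\a, J_0 v_\a). \]
Since $J_0$ is $\w_M$-compatible (Lemma \ref{J}(1)), the factor $\w_M(v_\a, J_0 v_\a)$ equals the Riemannian norm squared of $v_\a$ and is therefore $\geq 0$, with equality iff $v_\a = 0$. The crucial positivity observation is that the ratio $\a(\phi_0(Y))/\a(Y)$ is strictly positive: at $[g:Y]\in\UU$ the element $Y$ lies in its Weyl chamber $\wc_Y$ (so $\a(Y) > 0$ for $\a\in\Ra{Y}$), and by the construction of $\phi_0$, the point $\phi_0(Y)$ lies in the same chamber $\wc_Y$, whence $\a(\phi_0(Y)) > 0$ as well.

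The combination of these observations immediately yields both claims: $\w''(v, J_0 v) \geq 0$ as a sum of non-negative terms, and equality holds iff every $v_\a$ vanishes, i.e.\ iff $v \in \UU_0$, which by Proposition \ref{comparesymplform}(1) is exactly the tangent distribution to the fibers of $\pj$. There is no real obstacle here; the only subtle point, and the one I would double-check most carefully, is the geometric input that $\phi_0(Y)$ and $Y$ lie in the same Weyl chamber, which is baked into the definition of $\phi_0$ but worth stating explicitly since the whole positivity hinges on it.
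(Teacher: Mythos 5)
Your argument is correct and follows essentially the same route as the paper's (terse) proof: decompose $T\UU$ via \eqref{rootspglobaleq}, use that $J_0$ preserves the summands (Lemma \ref{J}), and apply the formulas of Proposition \ref{comparesymplform}(2) together with positivity of $\a(\phi_0(Y))/\a(Y)$. Your explicit identification $\w''=(\pj)^*\w_{\CO}$ via $\w_M|_{\C}=\sq^*\w_{\CO}$ (Proposition \ref{comparesymplform}(3)) and the remark that $\phi_0(Y)$ lies in the open chamber $\wc_Y$ are exactly the details the paper leaves implicit, and both are justified.
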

\begin{proof}
It is clear that this function vanishes on the fibers. The rest follows from the decomposition \eqref{rootspglobaleq}, Proposition \ref{comparesymplform}(2) and Lemma \ref{J}.
\end{proof}

Choose a subset $A$ of $\UU$ satisfying the following conditions:
\begin{enumerate}
\item $A$ is closed in $M$;
\item the interior of $A$ contains $\C$ and every \Ham chord $x_q\in\X(L_0,L_1)$; and
\item there exist sequences $\{R_i\}$ and $\{\e_i\}$ of positive reals such that $R_i$ increases to $+\infty$ and
\[A\cap \{ |Y|\in (R_i-\e_i, R_i+\e_i)\}=\emptyset .\]
\end{enumerate}
To construct $A$, choose a suitable subset of $\{[e:Y]|~Y\in\wc_{X_0}\}$ and apply the $G$-action to it.

\begin{definition} \label{condJ} Define
\begin{enumerate}
\item $\J_A(M)$ to be the space of almost complex structures $J$ on $M$ satisfying
\begin{enumerate}[(i)]
\item $J\equiv J_0$ outside $A$;
\item $J$ preserves the distribution $\UU_0$;
\item $J|_{T^*T_0}\equiv J_0|_{T^*T_0}$; and
\item $J$ is pointwise close enough to $J_0$ such that
\begin{itemize}
\item $J$ is $\w_M$-tame; and
\item the function
\[
\begin{array}{ccl}
T\UU &\ra& \RR\\
v&\mapsto & \w''(v,J v)
\end{array}
 \]
is non-negative and vanishes precisely on $\UU_0$ ($\w''$ is defined in Corollary \ref{fcnpositive}).
\end{itemize}
\end{enumerate}
\item $\J(\CO)$ to be the space of $\w_{\CO}$-compatible almost complex structures on $\CO$.
\end{enumerate}
\end{definition}

\begin{remark} The closeness condition in (1)(iv) is justified as follows. First it is clear that for any point $[g:Y]\in M$, there is a neighbourhood of $J_0$ in $End(T_{[g:Y]}M)$ for which the $\w_M$-tameness condition holds. Now assume $[g:Y]\in\UU$. Let $V$ be the symplectic orthogonal of $\UU_0$ which is the direct sum of $\UU_{\a}$, $\a\in\Ra{X_0}$. Let $S\subset V$ be the unit sphere. Considering $\w''(v,Jv)$ as a continuous function on $S\times End(T_{[g:Y]}\UU)$, one sees that $J_0$ has a neighbourhood for which $\w''(v,Jv)>0$ for any $v\in S$, by Corollary \ref{fcnpositive}. If $J$ lies in this neighbourhood and satisfies (ii) above, then the second part of (iv) holds.
\end{remark}

\subsection{Pseudoholomorphic curves} \label{curve}
Having specified the Hamiltonians and almost complex structures in the previous two subsections, we proceed to describe three different types of pseudoholomorphic curves which are involved in the proof of our main theorem. In the proof, we will not consider $\Phi_C$ but the chain map (defined below)
\[\Phi'_C:CW_b^*(L_0,L_1)\ra Hom_{\AA}^*((L_0,C),(L_1,C)) .\]
\begin{itemize}
\item \textbf{The differential of $\mathbf{CW_b^*(L_0,L_1)}$.} Choose Floer data $(H_t^M,J_t^M)$ with
\[ H_t^M \equiv H,\quad\quad  J_t^M\in \J_A(M).\]
The differential counts index-one solutions, modulo the $\RR$-action, $u:\RR\times[0,1]\ra M$ satisfying
\begin{condition}\label{S}
\end{condition}
\begin{center}
\begin{tabular}{lc}
(boundary condition)& $u(\cdot,0)\in L_0,~u(\cdot,1)\in L_1$\\ \\ [-.5em]
(asymptotic condition)& \qquad
\begin{minipage}{7cm}
$u$ converges to an element of $\X(L_0,L_1)$ as $s\to+\infty$ and $s\to -\infty$
\end{minipage}\\ \\ [-.5em]
(differential equation)& $ (du-X_{H_t^M}\otimes dt)^{0,1}_{J_t^M}=0$\\ \\ [-.5em]
(finiteness of energy)& $ \int |du-X_{H_t^M}\otimes dt|^2 < +\infty$.
\end{tabular}
\end{center}

\item \textbf{The differential of $\mathbf{Hom_{\AA}^*((L_0,C),(L_1,C))}$.} Let $\ul{S}=(S_-,S_{\CO},S_+)$ with $S_{\pm}:=\RR\times [0,\frac{1}{2}]$ and $S_{\CO}:=\RR\times[0,1]$. The Floer data $(\ul{H},\ul{J})$ consist of $(H_t^{M,\pm},J_t^{M,\pm})$ and $(H_t^{\CO},J_t^{\CO})$ on $S_{\pm}$ and $S_{\CO}$ respectively which we choose to be
\begin{center}
\begin{minipage}{3cm}
\begin{align*}
H_t^{M,-} &\equiv H,\\
H_t^{\CO}& \equiv 0,\\
H_t^{M,+} &\equiv H,
\end{align*}
\end{minipage}
\begin{minipage}{3cm}
\begin{align*}
J_t^{M,-} &\in \J_A(M),\\
 J_t^{\CO} &\in \J(\CO),\\
J_t^{M,+} &\in \J_A(M).
\end{align*}
\end{minipage}
\end{center}
The differential counts index-one solutions, modulo the $\RR$-action, $\ul{u}=(u_-,u_{\CO},u_+)$ with $u_{\pm}:S_{\pm}\ra M$ and $u_{\CO}:S_{\CO}\ra \CO$ satisfying
\begin{condition}\label{TS}
\end{condition}
\begin{center}
\begin{tabular}{lc}
(boundary condition)&$u_-(\cdot,0)\in L_0,~u_+(\cdot,1/2)\in L_1$ \\ \\ [-.5em]
(seam condition)&$(u_-(\cdot,1/2),u_{\CO}(\cdot,0))\in C,~ (u_{\CO}(\cdot,1),u_+(\cdot,0))\in C^T$ \\ \\ [-.5em]
(asymptotic condition)& \qquad
\begin{minipage}{7cm}
$\ul{u}$ converges to an element of $\X(\ul{L}_0,\ul{L}_1)$ as $s\to+\infty$ and $s\to -\infty$
\end{minipage}\\ \\ [-.5em]
(differential equation)& $ (d\ul{u}-X_{\ul{H}}\otimes dt)^{0,1}_{\ul{J}}=0$\\ \\ [-.5em]
(finiteness of energy)& $ \int |d\ul{u}-X_{\ul{H}}\otimes dt|^2 < +\infty$.
\end{tabular}
\end{center}

\item \textbf{The chain map $\mathbf{\Phi'_C:CW_b^*(L_0,L_1)\ra Hom_{\AA}^*((L_0,C),(L_1,C))}$. }
Recall \cite{EL, MWW} it counts pseudoholomorphic quilts whose domain is a pair $\ul{\S}=(\S_M,\S_{\CO})$ illustrated in Figure 4 below.
\begin{center}
\vspace{.1cm}
\begin{tikzpicture}
\tikzmath{\x1 = 0.6; \x2 = 1.6; \x3=2; \x4=1; \x5=\x1+(\x2-\x4)/2; \x6=1.5; \x7=3.5; \x8=0.2; \x9=0.45; \y1=0.45;}

\draw [->, line width=\x9mm] (0,0) -- (0,\x1) node [midway, left] {$x_{w,-}$};
\draw [->, line width=\x9mm] (0,\x1) -- (0,\x1+\x2) node [midway, left] {$x_{w,\CO}$};
\draw [->, line width=\x9mm] (0,\x1+\x2) -- (0,\x1+\x2+\x1) node [midway, left] {$x_{w,+}$};

\draw [->, line width=\x9mm] (\x3+\x7+\x6,\x5) -- (\x3+\x7+\x6,\x5+\x4) node [midway, right] {$x_q$};

\draw [line width=\x9mm, pattern=north east lines, pattern color=gray]  (0,0) -- (\x3,0)  to[out=2,in=182] (\x3+\x7,\x5) -- (\x3+\x7+\x6,\x5) -- (\x3+\x7+\x6,\x5+\x4) -- (\x3+\x7,\x5+\x4) to[out=178,in=-2]  (\x3,\x1+\x2+\x1) -- (0,\x1+\x2+\x1) -- (0,\x1+\x2) -- (\x3-\x8,\x1+\x2) arc (90:-90:0.5*\x2) -- (\x3-\x8,\x1) -- (0,\x1) -- cycle;

\node[anchor = south west] at (\x5+1.6*\x7/2,\x1+\x2+0.2*\x1/2) {$\pt_{L_1}\S_M$};
\node[anchor = north west] at (\x5+1.6*\x7/2,\x1-0.3*\x1/2)  {$\pt_{L_0}\S_M$};
\node[anchor = west] at (0.4*\x3, \x1+\x2/2) {$\S_{\CO}$};
\node[anchor = east] at (\x3+0.95*\x7, \x1+\x2/2) {$\S_M$};
\node[anchor = east] at (\x3+0.55*\x7, \x1+\x2/2) {$\pt_C\S_M$};
\end{tikzpicture}

\vspace{.2cm}
\noindent \hypertarget{fig4}{FIGURE 4.}
\end{center}

\noindent There is a positive \sle in the right (as input, from $CW_b^*(L_0,L_1)$) and a negative triple \sle in the left (as output, to $Hom_{\AA}^*((L_0,C),(L_1,C))$). The boundary $\pt\S_M$ of $\S_M$ has three components which are required to be real analytic. We label the bottom one by $\pt_{L_0}\S_M$, the top one by $\pt_{L_1}\S_M$ and the remaining one, which is also the boundary $\pt\S_{\CO}$ of $\CO$ (with reversed orientation), by $\pt_C\S_M$. This labelling will be compatible with the boundary and seam conditions we are going to impose on the quilts.

The Floer data consist of $(H_M,J_M), (H_{\CO},J_{\CO})$ and a closed 1-form $df$ which we choose to be exact. We require
\begin{enumerate}
\item $H_M\equiv H$ and $H_{\CO}\equiv 0$;
\item $J_M\in \mathcal{C}^{\infty}(\S_M,\J_A(M))$ and restricts to $J_t^M$ (resp. $J_t^{M,\pm}$) near the positive (resp. negative) strip-like end;
\item $J_{\CO}\in \mathcal{C}^{\infty}(\S_{\CO},\J(\CO))$ and restricts to $J_t^{\CO}$ near the negative strip-like end; and
\item the function $f:\S_M\ra \RR$ satisfies
\begin{enumerate}
\item $f|_{\pt_{L_0}\S_M}\equiv 0$, $f|_{\pt_C\S_M}\equiv \frac{1}{2}$ and $f|_{\pt_{L_1}\S_M}\equiv 1$;
\item $f$ is linear with $t$ near all strip-like ends.
\end{enumerate}
\end{enumerate}

\noindent The chain map $\Phi'_C$ counts index-zero solutions $\ul{u}=(u_M,u_{\CO})$ with $u_M:\S_M\ra M$ and $u_{\CO}:\S_{\CO}\ra \CO$ satisfying
\begin{condition}\label{Q}
\end{condition}
\begin{center}
\begin{tabular}{lc}
(boundary condition)&$u_M(\pt_{L_0}\S_M)\subset L_0,~u_M(\pt_{L_1}\S_M)\subset L_1$ \\ \\ [-.5em]
(seam condition)& \quad
\begin{minipage}{7cm}
$(u_M(z),u_{\CO}(z))\in C$ for every $z\in \pt_C\S_M$ (also regarded as an element of $\pt\S_{\CO}$)
\end{minipage}\\ \\ [-.5em]
(asymptotic condition)& \quad
\begin{minipage}{7.2cm}
$\ul{u}$ converges to an element of $\X(L_0,L_1)$ at the positive \sle and an element of $\X(\ul{L}_0,\ul{L}_1)$ at the negative triple strip-like ends
\end{minipage}\\ \\ [-.5em]
(differential equation)&
\begin{minipage}{7cm}
\[\left\{
\begin{array}{rc}
(du_M-X_H\otimes df)^{0,1}_{J_M} ~=& 0\\ \\ [-1em]
(du_{\CO})^{0,1}_{J_{\CO}}~=&0
\end{array}
\right.\]
\end{minipage}\\ \\ [-.5em]
(finiteness of energy)& $ \int |du_M-X_H\otimes df|^2 + \int |du_{\CO}|^2 < +\infty$.
\end{tabular}
\end{center}
\end{itemize}

\section{Proof of the main theorem} \label{pf}
In this section, by a quilt we always mean a quilt satisfying Conditions \ref{Q}.
\subsection{Analysis of the morphisms} \label{pfA}
Let $\ul{u}=(u_M,u_{\CO})$ be a quilt. Let $\tilde{u}_{\CO}$ be a lift of $u_{\CO}$ in $\C$ (recall $\C$ is a torus bundle over $\CO$) such that $v:=\tilde{u}_{\CO}\# u_M$ is continuous as a map $\S_M\cup \S_{\CO}\ra M=T^*(G/K)$. Denote by $x_{\ul{u},in}$ (resp. $x_{\ul{u},out}$) the limiting paths of $v$ near the positive (resp. negative) strip-like end. Then $x_{\ul{u},in}\in\X(L_0,L_1)$ and $x_{\ul{u},out}$ is the concatenation of two time-$\frac{1}{2}$ \Ham chords of $H$ and a continuous path connecting them. As $x_{\ul{u},in}$ and $x_{\ul{u},out}$ lie in $T^*T_0$, we can project them to the zero section and obtain two paths from $p_0=eK$ to $p_1=\exp(a)K$. Then there are unique $q_{\ul{u},in}, q_{\ul{u},out}\in Q^{\vee}$ such that these paths are homotopic, with fixed endpoints, to the paths $t\mapsto \exp(t( q_{\ul{u},in} +a ))\cdot p_0$ and $t\mapsto \exp(t( q_{\ul{u},out} +a))\cdot p_0$ respectively. Denote by $X_{\ul{u},in}, X_{\ul{u},out}\in W\cdot X_0$ the unique elements such that $\phi(x_{\ul{u},in})$ and $x_{\ul{u},out}$ lie in the fibers of $\sq$ at $X_{\ul{u},in}$ and $X_{\ul{u},out}$ respectively. From now on, we drop the subscript $\ul{u}$ in every notation we have just introduced, since it will be unlikely to cause any confusion.

\begin{definition}\label{goodbadugly} Fix a small open disk $V$ in $\S_M\cup\S_{\CO}$ which intersects the seam $\pt_C\S_M=\pt\S_{\CO}$. Let $\ul{u}=(u_M,u_{\CO})$ be a quilt.
\begin{enumerate}
\item We say that $\ul{u}$ is {\it good} if $V$ contains two points $z_1, z_2$ with $z_1\in \S_M,~z_2\in\S_{\CO}$ and away from the seam such that $du_{\CO}(z_2)\neq 0$, $u_M(z_1)\in \mr{A}$ and $(du_M-X_H\otimes df)(z_1)$ is not tangent to the fiber of $\pj$ containing $u_M(z_1)$.

\item We say that $\ul{u}$ is {\it bad} if $u_{\CO}$ is constant and the image of $u_M$ is contained in a fiber of $\pj$.

\item We say that $\ul{u}$ is {\it ugly} if $u_{\CO}$ is constant and the image of $u_M$ lies in $T^*T_0$, but it is not bad.
\end{enumerate}
\end{definition}

\noindent The following propositions will be proved in Appendix \ref{trans}.
\begin{proposition}\label{A} Every quilt must be of exactly one of the three types: good, bad and ugly.
\end{proposition}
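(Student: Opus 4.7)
The plan is to prove mutual exclusivity from the definitions and then exhaustiveness by case analysis on whether $u_{\CO}$ is constant.

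Mutual exclusivity is immediate: a good quilt has $du_{\CO}(z_2)\ne 0$ somewhere, so $u_{\CO}$ is not constant, whereas bad and ugly both require $u_{\CO}$ constant; bad and ugly are separated by the clause ``not bad'' in Definition \ref{goodbadugly}(3).

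If $u_{\CO}$ is not constant, then interior unique continuation for the $J_{\CO}$-holomorphic $u_{\CO}$ (since $H_{\CO}\equiv 0$) yields a $z_2\in V\cap\S_{\CO}$ away from the seam with $du_{\CO}(z_2)\ne 0$. Boundary unique continuation---Schwarz reflection of $u_{\CO}$ across the real-analytic seam followed by interior unique continuation---applied to the seam identity $\sq\circ u_M = u_{\CO}$ on $\pt_C\S_M$ shows that $u_{\CO}$ cannot be constant along any sub-arc of the seam inside $V$. Hence at some $z^*\in\pt_C\S_M\cap V$, the tangential derivative of $\sq\circ u_M$ is non-zero, so $du_M(z^*)$ is not tangent to the fiber of $\sq|_{\C}$ through $u_M(z^*)$, which by Proposition \ref{comparesymplform}(1) is the $\UU_0$-leaf there. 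Since $u_M(z^*)\in\C\subset\mr{A}$ and $X_H$ is tangent to $\UU_0$ (because $dH=\l_M\circ J_0$ and the Liouville field sits in $\UU_0^v$), continuity supplies $z_1\in V\cap\S_M$ with $u_M(z_1)\in\mr{A}$ and $(du_M-X_H\otimes df)(z_1)$ not tangent to $\UU_0$; thus $\ul{u}$ is good.

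Now assume $u_{\CO}\equiv wX_0$ for some $w\in W$ (dictated by the asymptotic). Then the seam of $u_M$ lies in the single $\UU_0$-leaf $\sq^{-1}(wX_0)\subset T^*T_0$, and every asymptotic Hamiltonian chord lies in $T^*T_0$ (as $T_0$ is totally geodesic by Lemma \ref{flattorus} and $H$ generates geodesic flow). If $du_M$ is tangent to $\UU_0$ on some open subset of $\S_M\cap u_M^{-1}(\UU)$, the relative unique continuation Lemma \ref{UCLemma} (applicable because $J_M$ preserves $\UU_0$ by Definition \ref{condJ}(1)(ii)) forces the image of $u_M$ into a single leaf of $\UU_0$, hence into a single fiber of $\pj$, so $\ul{u}$ is bad. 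Otherwise I exploit that $T^*T_0$ is a $J_M$-complex submanifold (Lemma \ref{J}(3) and Definition \ref{condJ}(1)(iii)): Schwarz reflection of $u_M$ across the seam produces a pseudoholomorphic extension whose $T^*T_0$-normal component vanishes on the seam; Aronszajn-type unique continuation combined with the asymptotic convergence to chords in $T^*T_0$ then forces this normal component to vanish identically, so $u_M(\S_M)\subset T^*T_0$ and $\ul{u}$ is ugly.

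The main obstacle is the last step: propagating $T^*T_0$-localization from the seam and asymptotic ends to all of $\S_M$. The Lagrangian boundary components $\pt_{L_i}\S_M$ map into $L_i$, which strictly contains $L_i\cap T^*T_0$, so no direct boundary unique continuation is available there; the argument must combine seam reflection with asymptotic control, which is precisely the content of the relative unique continuation Lemma \ref{UCLemma}.
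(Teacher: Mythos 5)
Your overall strategy coincides with the paper's: split on whether $u_{\CO}$ is constant and use unique continuation at the seam in both branches. The non-constant branch is fine; it is the contrapositive of the paper's argument (which assumes the quilt is not good and deduces that $u_{\CO}$ is constant on a seam arc, hence constant everywhere, a contradiction), and your passage from a non-vanishing tangential derivative of $\sq\circ u_M$ at a seam point $z^*$ to an interior point $z_1$ with the required non-tangency is legitimate, because $\C\subset\mr{A}$ and non-tangency to $\UU_0$ is an open condition.

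The one genuine flaw is sub-case (a) of the constant branch. Lemma \ref{UCLemma} requires $N$ to be a \emph{closed} submanifold equipped with an atlas ${\cal C}$ in which the admissible almost complex structures preserve all translates of the model subspace; the paper builds such an atlas only for $N=T^*T_0$ (and its $G$-translates), and the construction at points of $T^*T_0\setminus\UU$ is precisely the content of Lemma \ref{UC}. A single leaf of $\UU_0$, i.e.\ a fiber of $\pj$, is not closed in $M$ (its closure meets $M\setminus\UU$), and the foliation by these leaves exists only on $\UU$; hence the propagation ``$du_M$ tangent to $\UU_0$ on an open set $\Rightarrow$ image in a single leaf globally'' is not licensed by the lemma and can break where $u_M$ exits $\UU$. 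As written, sub-case (a) therefore establishes neither ``bad'' nor ``image in $T^*T_0$'', so bad-or-ugly is not proved there. The repair is immediate and is exactly what the paper does: drop the sub-case division. The seam condition alone gives $u_M(\pt_C\S_M)\subset\sq^{-1}(wX_0)\subset T^*T_0$, so Lemma \ref{UCLemma}(2) with $N=T^*T_0$ (your sub-case (b) argument, minus the unnecessary appeal to the asymptotics) yields $\im(u_M)\subset T^*T_0$ unconditionally, after which ``bad or ugly'' is purely definitional; determining which of these quilts are actually bad is deferred to Proposition \ref{B+C/2} and is not needed for Proposition \ref{A}.
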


\begin{proposition}\label{B} For generic $\ul{J}=(J_M,J_{\CO})$, every good quilt $\ul{u}=(u_M,u_{\CO})$ is regular and $u_M$ intersects $M\setminus\UU$ at finitely many points in the interior of $\S_M$.
\end{proposition}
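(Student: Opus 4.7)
The plan is a standard Sard--Smale universal-moduli argument, adapted to the restricted class $\J_A(M)$ of almost complex structures. Form the Banach space $\mathcal{J}^{univ}$ of pairs $\ul J=(J_M,J_{\CO})$ in a suitable (e.g.\ Floer $C^{\epsilon}$) completion of $C^{\infty}(\S_M,\J_A(M))\times C^{\infty}(\S_{\CO},\J(\CO))$, fixing $J_M\equiv J_0$ outside $\mr{A}$ and on $T^*T_0$ and fixing the strip-like-end profiles. Let $\mathcal M^{univ}$ be the Banach manifold of pairs $(\ul u,\ul J)$ with $\ul u$ a good solution of Conditions \ref{Q}. It suffices to show that the linearization of the universal $\bar\pt$-section is surjective at every point of $\mathcal M^{univ}$; then the projection $\mathcal M^{univ}\to\mathcal J^{univ}$ is Fredholm, Sard--Smale gives a residual set of regular $\ul J$, and Taubes' $C^{\epsilon}$-trick promotes regularity to $C^{\infty}$.

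Surjectivity is shown by the usual duality. Suppose $\eta=(\eta_M,\eta_{\CO})$ annihilates the image of the linearization; interpret it as a weak solution of the formal adjoint equation. Using variations of $J_{\CO}$ supported near $z_2$ together with $du_{\CO}(z_2)\neq 0$ and the absence of constraints on $J_{\CO}$, the standard local computation (cf.\ McDuff--Salamon, Prop.~3.2.1) forces $\eta_{\CO}\equiv 0$ near $z_2$, hence on all of $\S_{\CO}$ by Aronszajn's theorem, and thence on the seam $\pt_C\S_M$. For the $M$-patch, a perturbation $Y\in T_{J_M}\J_A(M)$ changes $\bar\pt_{J_M}u_M$ by $\tfrac12 Y\circ(du_M-X_H\otimes df)\circ j$ to first order. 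The hypothesis $u_M(z_1)\in\mr{A}$ lets $Y$ be localized near $z_1$ within $\J_A(M)$. Writing $\xi:=(du_M-X_H\otimes df)(z_1)=\xi_0+\xi^{\perp}$ along the splitting $T_{u_M(z_1)}\UU=\UU_0\oplus\bigoplus_{\a}\UU_{\a}$, the constraint $Y(\UU_0)\subseteq\UU_0$ gives $(Y\xi)_0=Y|_{\UU_0}\xi_0$ and $(Y\xi)^{\perp}=Y|_{\UU_0^{\perp}}\xi^{\perp}$. The non-tangency hypothesis at $z_1$ says exactly $\xi^{\perp}\neq 0$, so both components can be driven independently as $Y$ varies. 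This forces $\eta_M$ to vanish near $z_1$, whence $\eta_M\equiv 0$ by Aronszajn's theorem applied to the adjoint.

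The main obstacle is the restricted class $\J_A(M)$: because $J_M$ is rigid outside $\mr{A}$ and on $T^*T_0$, and because admissible variations $Y$ must preserve $\UU_0$, pointwise perturbations no longer span the full space of anti-linear endomorphisms as in the unconstrained setting. The two ``good'' conditions are tailored precisely to recover the missing surjectivity: the condition at $z_2$ handles the $\CO$-patch in the classical way, while the non-tangency at $z_1$ ensures that $Y\mapsto Y\cdot\xi(z_1)$ surjects onto the $\UU_0^{\perp}$-component, and $Y|_{\UU_0}$ independently controls the direction tangent to the fibers of $\pj$. Bad and ugly quilts, which violate these hypotheses, cannot be made regular by perturbing $\ul J$ alone and will be handled separately by the index argument of Proposition \ref{D}.

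For the finite-intersection assertion, Lemma \ref{codim2} presents $M\setminus\UU$ as a finite disjoint union of strata of real codimension $\geq 2$. The good hypothesis supplies $z_1$ with $u_M(z_1)\in\mr{A}\subset\UU$, so $u_M$ is not contained in any single stratum; the relative unique continuation principle (Lemma \ref{UCLemma}) then forces the preimage of each stratum to be discrete in $\mr{\S}_M$. Near each strip-like end $u_M$ converges exponentially to a chord in $\X(L_0,L_1)$ whose image lies in $\UU$ (we may take $a$ generic so that every $q+a\in\t_0$ is in $U$), so $u_M^{-1}(M\setminus\UU)\cap\mr{\S}_M$ is relatively compact in the interior. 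A finite disjoint union of relatively compact discrete sets is finite, completing the argument.
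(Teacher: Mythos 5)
Your regularity argument follows the same route the paper intends (universal moduli space, Sard--Smale, and the observation that the two conditions defining a good quilt supply exactly the pointwise surjectivity still available within the restricted classes $\J(\CO)$ and $\J_A(M)$), so that part is essentially correct. Two small caveats: admissible variations $Y$ must also vanish along $T^*T_0$ by Definition \ref{condJ}(1)(iii), so one should observe that $z_1$ can be chosen with $u_M(z_1)\notin T^*T_0$ (otherwise $u_M$ would map an open subset of $V$ into $T^*T_0$ and Lemma \ref{UCLemma} would make the quilt bad or ugly); and your block decomposition of $Y\xi$ is not forced by the constraint, which only restricts $Y$ on $\UU_0$ --- it is precisely the freedom of $Y$ on $\UU_0^{\perp}$, applied to $\xi^{\perp}\neq 0$, that lets $Y\xi$ sweep out all of $T_{u_M(z_1)}M$; read literally, your version would need $\xi_0\neq 0$ to reach the $\UU_0$-component, and goodness does not guarantee that.

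The finiteness argument, however, has a genuine gap. Lemma \ref{UCLemma} applies to an even-dimensional submanifold $N$ whose tangent spaces are preserved, in suitable charts, by all the almost complex structures in play, and its conclusion is only that $u$ cannot map a non-empty open set into $N$ without having image contained in $N$; the strata of $M\setminus\UU$ are not of this type, and even for an $N$ of this type the lemma says nothing about the dimension of $u^{-1}(N)$. A pseudoholomorphic map can meet a codimension-two submanifold that is not an almost complex submanifold along a one-dimensional set: $z\mapsto(z,0)$ meets the totally real $\RR\times\RR\subset\CC^2$ along the real axis. So ``$u_M$ is not contained in a stratum'' does not yield ``$u_M^{-1}(\text{stratum})$ is discrete.'' What is needed --- and what the paper does --- is a second genericity step (the general position argument, proof of Lemma 3.4.7 in \cite{MS}): after shrinking the residual set of $\ul{J}$ further, $u_M$ is transverse to each stratum of $M\setminus\UU$ over $\mr{\S}_M$ and to each stratum of $L_i\setminus\UU$ along $\pt_{L_i}\S_M$. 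Transversality plus codimension $\geq 2$ (Lemma \ref{codim2}) makes the interior intersection zero-dimensional and the boundary intersection empty, and finiteness then follows from the asymptotics as you indicate. Note that the boundary statement is not optional: your argument never excludes intersection points on $\pt_{L_0}\S_M\cup\pt_{L_1}\S_M$, and Lemma \ref{end1} excises disks around \emph{all} points of $u_M^{-1}(M\setminus\UU)$ assuming they are interior.
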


\begin{proposition}\label{B+C/2} For any $w\in W$ and $q\in Q^{\vee}$, there exists a unique quilt $\ul{u}=(u_M,u_{\CO})$ such that
\begin{enumerate}
\item $u_{\CO}$ is constant and $\im(u_M)\subset T^*T_0$; and
\item its input is $x_q\in\X(L_0,L_1)$ and output is $\ul{x}_w\in\X(\ul{L}_0,\ul{L}_1)$.
\end{enumerate}
It is independent of $\ul{J}$; it is bad if $q+a\in w\wc_{X_0}$; otherwise it is ugly.
\end{proposition}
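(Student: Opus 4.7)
The plan is to reduce the quilt equation, after constancy of $u_{\CO}$ and the $T^*T_0$-restriction, to an inhomogeneous Cauchy--Riemann problem on $\CC^r$ with affine linear Lagrangian boundary conditions, and to solve it via an energy-based uniqueness argument combined with Fredholm surjectivity.

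The $\CO$-component of $\ul x_w$ being the constant chord $wX_0$ forces $u_{\CO}\equiv wX_0$, whence the seam condition and $\im(u_M)\subset T^*T_0$ put $u_M(\pt_C\S_M)\subset T_0\oplus\{wX_0\}$. By Lemma \ref{J}(3), lifting to $\tilde u_M:\S_M\to T^*\t_0\cong\CC^r$ yields the equation
\[(d\tilde u_M - X_H\otimes df)^{0,1}_{J_0} = 0,\]
with $X_H(q,p)=(p,0)$ linear, three affine Lagrangian boundaries $\{0\}\oplus\t_0$, $\{q+a\}\oplus\t_0$, $\t_0\oplus\{wX_0\}$, and asymptotics determined by $x_q$ and $\ul x_w$. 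Using an orthonormal basis of $\t_0$, everything decouples into $r$ identical one-dimensional problems on $\CC$, so I may assume $r=1$.

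For uniqueness, if $\tilde u_1,\tilde u_2$ are two solutions, $\tilde w:=\tilde u_1-\tilde u_2$ satisfies the same equation (by linearity of $X_H$) with the affine shifts canceled, so the boundary conditions become the linear Lagrangians $\{0\}\oplus\t_0$ and $\t_0\oplus\{0\}$ and the asymptotics at both strip-like ends are trivial. The energy identity
\[E(\tilde w) = \int_{\S_M}\tilde w^*\w_M - \int_{\S_M}d\bigl(H(\tilde w)\cdot df\bigr)\]
then reduces to boundary integrals that all vanish: the primitive $\l_M = p\,dq$ pulls back to zero on each linearized Lagrangian (either $q$ or $p$ vanishes there) and $f$ is locally constant on each $\pt_{L_i}\S_M$. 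Hence $E(\tilde w)=0$ and $\tilde w\equiv 0$.

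Existence follows from Fredholm theory: the linearized Cauchy--Riemann operator has index zero (consistent with $\Phi'_C$ being a chain map of expected dimension zero), and its kernel is trivial by the difference argument just given, so index zero yields surjectivity and hence a unique solution. As a sanity check, in the balanced case $q+a=wX_0$ the explicit formula $\tilde u_M(z) = (wX_0\cdot f(z),wX_0)$ is seen directly to satisfy all conditions. Independence of $\ul J$ is automatic because the solution lies in $T^*T_0$, where $J_M\equiv J_0$ by Definition \ref{condJ}(iii) and $J_{\CO}$ is inert since $u_{\CO}$ is constant. Finally, a fiber of $\pj$ in $T^*T_0$ is the locus where the $p$-coordinate lies in a single Weyl chamber $w'\wc_{X_0}$; since the $p$-coordinate of $\tilde u_M$ is $q+a$ at the input and $wX_0\in\mathrm{int}(w\wc_{X_0})$ at the output, continuity gives bad iff $q+a\in w\wc_{X_0}$ and ugly otherwise. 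The main obstacle I foresee is making the Fredholm existence step airtight in the functional-analytic framework of the paper, in particular confirming that the kernel-triviality established by the $L^2$-energy argument transfers to the Sobolev completions used to define $\Phi'_C$.
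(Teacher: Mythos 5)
Your reduction (constancy of $u_{\CO}$, the seam forcing $u_M(\pt_C\S_M)\subset T_0\times\{wX_0\}$, lifting to $T^*\t_0$ with the three affine Lagrangian boundary conditions) and your uniqueness argument are essentially the paper's, which runs the same energy--Stokes computation after gauging away $X_H\otimes df$; note only that $E(\tilde w)=0$ gives $d\tilde w=X_H(\tilde w)\otimes df$, so you still need the trivial asymptotics to conclude $\tilde w\equiv 0$ (fixable). The genuine gap is existence. Injectivity plus index zero makes the linearized operator surjective, which says that a solution, \emph{if it exists}, is regular and isolated; it does not produce a solution of the boundary value problem. Even exploiting the fact that the problem is affine (so that one could try to solve a linear inhomogeneous equation against a reference map), you would still need (a) a reference map with the correct asymptotics at the three boundary punctures in the relevant Sobolev setup, and (b) a proof that the index of the \emph{reduced} problem in $T^*\t_0$ is zero. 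Your justification of (b), ``consistent with $\Phi'_C$ having expected dimension zero,'' is not valid: the full quilt index and the reduced index differ in general --- ugly quilts have strictly positive full index (Proposition \ref{D}) while the reduced problem always has index zero --- so the reduced index requires its own computation. The paper closes exactly this hole by a deformation argument: when $\afflag_3$ is pushed through $\afflag_1\cap\afflag_2$ the constant map is an explicit regular solution, the implicit function theorem moves $\afflag_3$ slightly, and a homothety centered at $\afflag_1\cap\afflag_2$ (legitimate because the complex structure is linear at each domain point) reaches the general configuration. Alternatively, after gauging, your coordinate decoupling could be completed by solving each one-dimensional problem via the Riemann mapping theorem, but you did not carry this out; also the decoupled problems are not ``identical,'' since the pair of constants $((q+a)_i,(wX_0)_i)$ varies with $i$, and they become similar to one another only after the gauge transformation.

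The bad/ugly criterion also needs more than ``continuity.'' Badness requires the \emph{entire} image of $u_M$ to lie in one fiber of $\pj$, i.e.\ its $\pp$-component must stay inside the open chamber $w\wc_{X_0}$; continuity only controls the ends. The paper proves a localization statement: by uniqueness the solution lies in the affine $2$-plane through the three pairwise intersections of $\afflag_1,\afflag_2,\afflag_3$, and by Hopf's interior and boundary maximum principles it lies in the triangle they span, so the $\pp$-component traces the segment from $q+a$ to $wX_0$, which lies in $w\wc_{X_0}$ precisely when $q+a\in w\wc_{X_0}$ by convexity of the chamber. Coordinate-wise bounds (even if you established them) would only confine the $\pp$-component to an axis-parallel box with corners $q+a$ and $wX_0$, and such a box need not be contained in the chamber; so some form of the segment/convex-hull localization is genuinely needed for the ``bad if $q+a\in w\wc_{X_0}$'' half of the statement.
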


\begin{proposition}\label{C} All bad quilts have index zero and are regular.
\end{proposition}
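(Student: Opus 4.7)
The plan has two parts. For the index, I would use that $\Phi'_C$ is a degree-zero chain map: any quilt contributing to $\Phi'_C$ has virtual dimension zero because the input and output generators sit in the same cohomological degree, and the Fredholm index is determined by the asymptotic and topological data. Bad quilts do contribute (the explicit unique bad quilt of Proposition \ref{B+C/2} is one), so their Fredholm index must equal zero.

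For regularity, I would exploit that a bad quilt factors through $T^*T_0$. Indeed, Proposition \ref{comparesymplform}(1) describes the fiber of $\pj$ through $wX_0$ as $\{[\dot{w}\exp(\xi):Y]\mid \xi\in\t_0,\,Y\in\wc_{X_0}\}$ for a lift $\dot{w}\in\norm{\t_0}{K}$, and the equivalence $[\dot{w}\exp(\xi):Y]=[\exp(\Ad(\dot{w})\,\xi):\Ad(\dot{w})\,Y]$ together with $\Ad(\dot{w})\,\t_0=\t_0$ shows this fiber lies in $T^*T_0$. By Definition \ref{condJ}(1)(iii), $J_M|_{T^*T_0}=J_0|_{T^*T_0}$, and by Lemma \ref{J}(2) this preserves the splitting $TM=\UU_0\oplus\bigoplus_{\a\in\Ra{X_0}}\UU_\a$; together with the tangent-space description $T\C=\UU_0^h\oplus\bigoplus_\a\UU_\a$ from the proof of Proposition \ref{comparesymplform}(3), the full linearised operator $D_{\ul{u}}$ decomposes compatibly with the root splitting into a tangential summand on $u_M^*\UU_0$ (coupled to $\xi_\CO$ on the seam via $d\sq$) and independent normal summands on each $u_M^*\UU_\a$.

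The tangential summand is a linear $\bar{\partial}$-operator on a flat complex bundle over a simply-connected quilted domain with affine totally real boundary data, and its surjectivity is already part of the Riemann-mapping argument behind Proposition \ref{B+C/2}. Each normal summand $u_M^*\UU_\a$ is a trivial complex bundle of complex rank $m_\a$ (since the root decomposition is globally defined along $T^*T_0$) with boundary conditions totally real and swapped by $J_0$: namely $\UU_\a^v$ on the $L_i$-faces and the full $\UU_\a$, paired with $\xi_\CO$ via the quotient map, on the seam. After trivialising, these become standard alternating $\RR^{m_\a}$ and $i\RR^{m_\a}$ Lagrangian boundary conditions on a disk, whose kernel vanishes. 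Since each summand has index $0$ and no kernel, the full operator is surjective. The main obstacle will be carefully verifying that the coupling of $\xi_M^{\UU_\a}$ with $\xi_\CO$ on the seam reduces, after eliminating $\xi_\CO$ via the surjectivity of $d\sq$, to a genuine Lagrangian boundary value problem rather than a non-local one.
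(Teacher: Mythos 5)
Your index argument is circular. By definition $\Phi'_C$ counts only index-zero solutions, so ``bad quilts contribute to $\Phi'_C$, hence have index zero'' assumes exactly what Proposition \ref{C} is supposed to supply (it is the input to Lemma \ref{fil1}); and it is not true that every quilt connecting the given generators has index zero --- Proposition \ref{D} exhibits quilts (the ugly ones) of strictly positive index, so ``input and output sit in the same degree'' is something to be verified for bad quilts, not a formal consequence of $\Phi'_C$ being a degree-zero chain map. The missing step is the one the paper makes: since a bad quilt lies in a single fiber of $\pj$, which (as you correctly check) is contained in $T^*T_0$, the glued map $\tilde{u}_{\CO}\# u_M$ lies in $T^*T_0$; hence $q_{in}=q_{out}$ and $X_{in}=X_{out}$, and then the index formula \eqref{Index} of Lemma \ref{Ind} gives $\ind(\ul{u})=0$ because the two sums coincide.

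For regularity, the step you yourself flag as the ``main obstacle'' is precisely where the plan breaks down. On the seam the linearized condition is not a Lagrangian condition inside $u_M^*\UU_{\a}$: it says $\xi_M\in T\C$ (so the whole symplectic subspace $\UU_{\a}$ is allowed) together with $\xi_{\CO}=d\sq(\xi_M)$, where $\xi_{\CO}$ is an independent unknown solving its own Cauchy--Riemann equation on $\S_{\CO}$; it cannot be ``eliminated'' pointwise, so the normal part is not a disk problem on $\S_M$ with alternating $\RR^{\ma}$ and $i\RR^{\ma}$ conditions. (Incidentally, the tangential part is \emph{not} coupled to $\xi_{\CO}$: $d\sq$ kills $\UU_0^h$, and the seam imposes the Lagrangian condition $\UU_0^h$ on the $\UU_0$-component, which is how Proposition \ref{B+C/2} enters.) Moreover a genuine direct-sum decomposition of $D_{\ul{u}}$ along each $\UU_{\a}$ is not available: Definition \ref{condJ} only forces $J_M$ to preserve $\UU_0$ and to agree with $J_0$ along $T^*T_0$, while the linearization contains terms of the form $J_M(\nabla_{\xi_M}J_M)(du_M-X_H\otimes df)$ involving derivatives of $J_M$ in directions transverse to the image, which need not respect the root splitting. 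The paper's resolution goes the other way: it projects $\xi_M$ by $d(\pj)$ into the fixed vector space $T_{w_qX_0}\CO$, uses the metric-independence of the linearization (choosing a product metric for a local trivialization of $\pj$, plus the tangency of $X_H$ to the fibers and the $\UU_0$-invariance of $J_M$) to see that the projection solves a standard Cauchy--Riemann equation, glues it with $\xi_{\CO}$ across the seam into one continuous solution on $\S_M\cup\S_{\CO}$ with locally constant Lagrangian boundary conditions, kills it by the energy argument, and then the leftover kernel lies in the fiber direction, where the regularity established in Proposition \ref{B+C/2} applies. Your overall strategy (exploit the containment in $T^*T_0$ and reduce the tangential direction to Proposition \ref{B+C/2}) is the right one, but as written both halves of the proposal have genuine gaps.
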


\begin{proposition}\label{D} All ugly quilts have positive indices . It follows that they do not contribute to $\Phi'_C$. Moreover, the standard compactness argument for proving the finiteness of the moduli is applicable.
\end{proposition}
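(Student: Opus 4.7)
My plan is to use the explicit description of ugly quilts from Proposition \ref{B+C/2} together with the flat structure of $T^*T_0$ to compute the Fredholm index directly and address compactness via monotonicity. By Definition \ref{goodbadugly}(3) and Proposition \ref{B+C/2}, for an ugly quilt $\ul{u}$ the component $u_{\CO}$ is constant at some $wX_0\in\RFF{0}\cap\RFF{1}$, the image of $u_M$ lies in $T^*T_0$, and $\ul{u}$ is uniquely determined by the pair $(w,q)\in W\times Q^{\vee}$ with $q+a\notin w\wc_{X_0}$. By Definition \ref{condJ}(iii) the almost complex structure $J_M$ agrees with $J_0$ on $T^*T_0$, and by Lemma \ref{J}(3) the latter pulls back to $-J_{std}$ on the universal cover $T^*\t_0\simeq\CC^r$. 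Lifting $u_M$ to $\CC^r$ turns the problem into a standard inhomogeneous Cauchy-Riemann equation with affine Lagrangian boundary conditions and the quadratic Hamiltonian $H=\tfrac{1}{2}|Y|^2$, whose Fredholm index is given by a Maslov/Conley-Zehnder-type formula of the shape used to prove Proposition \ref{nodiffprop}. Since $u_{\CO}$ is constant, the $\S_{\CO}$-patch contributes no free parameters to the linearization and, along the root space decomposition \eqref{rootspglobaleq}, the index decouples as a sum $\sum_{\a\in\Ra{X_0}}2\ma N_\a(w,q)$, where $N_\a(w,q)\in\ZZ_{\geq 0}$ records the number of walls $\ker\a$ separating $q+a$ from $w\wc_{X_0}$. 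Equivalently, monotonicity (Proposition \ref{admiss}) gives $\ind=\mu(\ul{u})=\area(\ul{u})/\monoconst$.

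In the bad case ($q+a\in w\wc_{X_0}$) all $N_\a$ vanish, recovering the index zero of Proposition \ref{C}. In the ugly case the condition $q+a\notin w\wc_{X_0}$ forces at least one wall $\ker\a$ to separate $q+a$ from $w\wc_{X_0}$, yielding $N_\a\geq 1$ and hence $\ind\geq 2\ma\geq 2$. Since $\Phi'_C$ counts only index-zero quilts, ugly quilts are thereby discarded. The main obstacle I expect is verifying that the seam condition at the constant $\S_{\CO}$-patch contributes trivially to the linearized index; this requires checking that the coisotropic reduction $\sq$ identifies the linearized boundary problem on the seam with a diagonal Dirichlet-type condition compatible with \eqref{rootspglobaleq}, after which the decoupled index formula follows from the Maslov index computations in the proof of Proposition \ref{admiss}.

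For compactness, the moduli of ugly quilts with fixed asymptotic class is a single point by Proposition \ref{B+C/2}, hence a posteriori trivially compact. The standard Gromov-type argument applies more generally: monotonicity bounds the energy in any fixed asymptotic class, and the minimal Maslov index $\geq 2$ of $C$ and $\RFF{i}$ (Corollary after Proposition \ref{admiss}) together with monotonicity of $\CO$ excludes non-trivial sphere and disk bubbling in the limit. This yields a uniform $\ul{J}$-independent bound on the energy and hence the desired finiteness.
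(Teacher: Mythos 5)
There is a genuine gap in your index computation, which is the heart of the statement. You assert that the constant $\S_{\CO}$-patch ``contributes no free parameters to the linearization'' and that the index decouples as $\sum_{\a}2\ma N_{\a}(w,q)$, whence $\ind(\ul{u})\geqslant 2\ma\geqslant 2$. Neither claim is correct. After homotoping the seam condition $TC$ to a split one via Lemma \ref{easy} (as in Lemma \ref{Ind}), the constant patch in $\CO$ carries a \emph{moving} Lagrangian boundary condition $T_{wX_0}\RFF{p(z)}$, where $p(z)$ is the $G/K$-projection of the seam lift, and it contributes $-\sum_{\a\in R^+_{X_{out}}}\ma\lfloor 2\a(q_{out}+a)\rfloor$ to the index; so the constant patch does matter. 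The correct formula is \eqref{Index}: for an ugly quilt one has $q_{in}=q_{out}=:q$ (the glued map lies in $T^*T_0$) and $X_{in}\neq X_{out}$ (by Proposition \ref{B+C/2}), and \eqref{Index} reduces to the sum over $\a\in R^+_{X_{in}}$ with $\a(X_{out})<0$ of $\ma\bigl(\lfloor 2\a(q+a)\rfloor-\lfloor-2\a(q+a)\rfloor\bigr)$; since $\a(q+a)>0$ for such $\a$ (the input is a Hamiltonian chord), each term is at least $\ma\geqslant 1$ and the sum is non-empty, giving positivity --- this is the paper's argument. Note each term is an odd multiple of $\ma$ whenever $2\a(q+a)\notin\ZZ$, so your formula $2\ma N_{\a}$ has the wrong parity, and your bound $\ind\geqslant 2$ is simply false when a multiplicity-one restricted root occurs (e.g.\ $\ma=1$ and $2\a(q+a)\in(0,1)$ gives an ugly quilt of index $1$) --- exactly the situation that distinguishes the general case from the group case. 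The aside ``$\ind=\mu(\ul{u})=\area(\ul{u})/\monoconst$'' is likewise unjustified: Proposition \ref{admiss} applies to compact quilted surfaces without strip-like ends, whereas the Fredholm index of a quilt with ends involves the floor functions in \eqref{Index}, which are not proportional to area (an ugly quilt has $u_{\CO}$ constant, hence zero $\CO$-area, yet positive index).

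The remaining parts of your write-up are fine in spirit and close to what is intended: positive index excludes contributions to the index-zero count defining $\Phi'_C$, and finiteness is unproblematic because by Proposition \ref{B+C/2} the ugly quilts with given input and output form a single point (hence finitely many in total), with monotonicity and the minimal Maslov number at least two controlling bubbling. But the proof of positivity as you wrote it does not stand and must be replaced by the computation via \eqref{Index} sketched above.
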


\noindent From now on, assume $\ul{J}=(J_M,J_{\CO})$ has been chosen generically.

\begin{lemma}\label{end1} If $\ul{u}$ is good, then
\begin{equation}\label{good>}
\langle q_{in} +a,X_{in}\rangle > \langle q_{out} +a,X_{out}\rangle.
\end{equation}
\end{lemma}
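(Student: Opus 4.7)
The plan is to derive an energy identity of the shape
\[
E(\ul u)\;=\;\bigl[\langle q_{in}+a,\,X_{in}\rangle-\langle q_{out}+a,\,X_{out}\rangle\bigr] \;+\; E_+(\ul u),
\]
in which $E_+(\ul u)\ge 0$ always and $E_+(\ul u)>0$ precisely under the hypothesis that $\ul u$ is good. Since $E(\ul u)\ge 0$ automatically, the strict inequality \eqref{good>} is then immediate. The non-negative quantity $E_+$ will be essentially the symplectic area of the projected combined map $U:\S_M\cup\S_\CO\to \CO$ defined by $U=\pi_{SQ}\circ\phi\circ u_M$ on $\S_M$ and $U=u_\CO$ on $\S_\CO$; these match on the seam by the defining condition of $C$, making $U$ continuous.

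To obtain the identity I would use the 1-form $\mu:=\l_M-\phi^*\l_M$ on $\UU$, which satisfies $d\mu=\w_M-\phi^*(\w_M|_{\C})=\w_M-(\pi_{SQ}\circ\phi)^*\w_\CO$. Combined with the standard energy formula for the perturbed Cauchy--Riemann equation, this gives
\[
E(\ul u)\;=\;\int_{\S_M\cup\S_\CO}U^*\w_\CO \;+\;\oint_{\pt\S_M}u_M^*\mu\;-\;\oint_{\pt\S_M}(H\circ u_M)\,df.
\]
Stokes applies despite the finitely many interior points where $u_M$ may meet $M\setminus\UU$, by Proposition \ref{B} and Lemma \ref{codim2}: these points are codimension two in $M$, so shrinking tubular neighborhoods contribute nothing in the limit. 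On $\pt_{L_0}$ and $\pt_{L_1}$ one has $\l_M=0$ (cotangent fibers), and on $\pt_C$ one has $\phi=\id$, so $\mu$ vanishes on all three fixed boundary components; together with $df=0$ on them this kills all non-asymptotic contributions. At the input end, a direct computation with $\mu(v)=\langle Y-\phi_0(Y),\eta_{\mathrm{hor}}\rangle$ along $x_{q_{in}}(t)=[\exp(t(q_{in}+a)):q_{in}+a]$ gives $\int\mu=|q_{in}+a|^2-\langle q_{in}+a,X_{in}\rangle$, and at the output ends $\mu$ vanishes because $\phi_0(wX_0)=wX_0$. An analogous computation of $\oint(H\circ u_M)\,df$ yields $\tfrac{1}{2}|q_{in}+a|^2-\tfrac{1}{2}|X_0|^2$.

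The third ingredient is rewriting $\int U^*\w_\CO$ in terms of the output capping data $[u_q,\g_q]$. For this I would lift the seam $\pt_C\S_M$ to the universal cover $T^*\t_0\to T^*T_0$, using that its asymptotic endpoints at $-\infty$ are pinned to the matching points of $x_{w,\pm}$, whose lifts are then forced by continuity along $\pt_{L_0}$ and $\pt_{L_1}$ together with the cap class $q_{out}$. Pairing the resulting change $\Delta P$ in the base coordinate with the constant $Q=wX_0$ along the seam (via $\l_M|_\C=QdP$) produces the term $\langle q_{out}+a,\,X_{out}\rangle$, while the ``sum of roots'' identity \eqref{sumofroot} and the monotonicity in Proposition \ref{admiss} ensure consistency of all the quadratic pieces. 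To finish, Corollary \ref{fcnpositive} together with the restrictions on $J_M$ in Definition \ref{condJ} give $u_M^*\w''(v,J_Mv)\ge 0$ pointwise with equality only on $\UU_0$, and $u_\CO^*\w_\CO\ge 0$ for free. The good hypothesis supplies $z_1\in\S_M$ where $du_M-X_H\otimes df$ is not tangent to a $\pi_{SQ}\circ\phi$-fiber and $u_M(z_1)\in \mr A$, and $z_2\in\S_\CO$ where $du_\CO\ne 0$; this forces $E_+(\ul u)>0$ strictly.

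The main obstacle will be Step three, i.e.\ properly reading the factor $\langle q_{out}+a,X_{out}\rangle$ out of $\int U^*\w_\CO$. The $q_{out}$-dependence is purely topological: it is stored in the lifted winding of the seam in the universal cover, never in the asymptotic data of $x_{w,\pm}$ themselves, so the accounting must be done carefully to avoid double-counting with the boundary integral of $\mu$ and to reconcile the $Q^\vee$-ambiguities in the lift with the quadratic remainders produced in the second step.
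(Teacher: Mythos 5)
Your proposal contains the two ingredients that actually prove the lemma --- the projected area $\int_{\S_M}(\phi\circ u_M)^*\w_M+\int_{\S_\CO}u_\CO^*\w_\CO$ is pointwise non-negative by Definition \ref{condJ}(iv) (together with the fact that $\phi_*X_H$ lies in the kernel of $\w_M|_{\C}$), and is strictly positive when $\ul{u}$ is good --- but the frame you wrap them in is logically broken. From an identity $E(\ul{u})=B+E_+(\ul{u})$ with $E(\ul{u})\geq 0$ and $E_+(\ul{u})>0$ you can only conclude $B\geq -E_+(\ul{u})$, which is vacuous; you cannot conclude $B>0$. The full geometric energy $E(\ul{u})$, the form $\mu=\l_M-\phi^*\l_M$ and the term $\oint(H\circ u_M)\,df$ are red herrings: your own asymptotic computations of $\oint\mu$ and $\oint(H\circ u_M)\,df$ produce only input data ($|q_{in}+a|^2$, $\langle q_{in}+a,X_{in}\rangle$, $|X_0|^2$) and no trace of $q_{out}$ or $X_{out}$, so these boundary terms cannot equal the bracket $B$, and no amount of reconciling quadratic pieces via monotonicity will fix that.

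What works, and is the paper's proof, is the direct identity $B=\int_{\S_M'}(\phi\circ u_M)^*\w_M+\int_{\S_\CO}u_\CO^*\w_\CO$ (your $E_+$, with no energy identity in sight): apply Stokes to the exact form $(\phi\circ v)^*\w_M=d\bigl((\phi\circ v)^*\l_M\bigr)$ on the glued surface, where $v=\tilde{u}_\CO\# u_M$ and $\tilde{u}_\CO$ is a lift of $u_\CO$ to $\C$, so that $\tilde{u}_\CO^*\w_M=u_\CO^*\w_\CO$ and $\phi|_{\C}=\id$. Since $\phi$ preserves cotangent fibers, $\l_M$ kills the $L_0$- and $L_1$-boundary, and the two asymptotic integrals are exactly $\langle q_{in}+a,X_{in}\rangle$ and $\langle q_{out}+a,X_{out}\rangle$; this is where $q_{out}$ enters, namely through the $\l_M$-integral of the limiting concatenated path at the negative end, so the ``step three'' obstacle you flag dissolves. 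Two further corrections: the shrinking circles around $u_M^{-1}(M\setminus\UU)$ are negligible not because that set has codimension two but because the fiber component $\mb{p}\circ\phi\circ u_M$ stays bounded near those points while the base component extends continuously; and the non-negativity on $\S_M$ is that of the degenerate form $\w''$ evaluated on $\pt_s u_M-\pt_s f\cdot X_H$, which is precisely condition (iv) of Definition \ref{condJ} --- tameness of $J_M$ with respect to $\w_M$ alone gives nothing here.
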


\begin{proof}
It is easy to see that the left-hand and right-hand sides of \eqref{good>} are equal to $\int (\phi\circ x_{in})^*\l_M$ and $\int x_{out}^*\l_M$ respectively. By Proposition \ref{B}, we can write $u_M^{-1}(M\setminus\UU)=\{z_1,\ldots,z_k\}\subset \mr{\S}_M$. For each $i=1,\cdots,k$, let $D_i$ be a small disk in $\mr{\S}_M$ centered at $z_i$, and put $\S_M':=\S_M\setminus\bigcup_{i=1}^k \mr{D}_i$. Then $u_M(\S_M')\subset \UU$. Since $\CO$ is the symplectic quotient of the coisotropic submanifold $\C$ and $\phi|_{\C}=\id$, we have
\begin{equation}\label{pf1}
\int_{\S'_M\cup\S_{\CO}} (\phi\circ v)^*\w_M = \int_{\S'_M}(\phi\circ u_M)^*\w_M + \int_{\S_{\CO}}u_{\CO}^*\w_{\CO}.
\end{equation}
By Stokes' theorem, the left-hand side of \eqref{pf1} is equal to
\[   \langle q_{in} +a,X_{in}\rangle - \langle q_{out} +a,X_{out}\rangle - \sum_{i=1}^k\int_{\pt D_i}(\phi\circ u_M|_{\pt D_i})^*\l_M. \]
We claim that each integral $\int(\phi\circ u_M|_{\pt D_i})^*\l_M$ converges to 0 as $\diam(D_i)\to 0$. To prove it, write $\l_M=\mb{p}d\mb{q}$, the standard local expression for the Liouville form $\l_M$. While $\phi\circ u_M$ may not be defined on $D_i$, its projection $\mb{q}\circ \phi\circ u_M$ into the base $G/K$ is. Since $\mb{p}\circ\phi\circ u_M$ is bounded on $D_i\setminus\{z_i\}$, the claim is proved.

Since $\ul{u}$ is good, we have $\int_{\S_{\CO}}u_{\CO}^*\w_{\CO}>0$. Therefore, it suffices to show that $\int_{\S'_M}(\phi\circ u_M)^*\w_M \geqslant 0$. Using the perturbed Cauchy-Riemann equation $(du_M-X_H\otimes df)^{0,1}_{J_M}=0$ and the fact that $\phi_*X_H$ lies in the kernel of $\w_M|_{\C}$, we have
\[ (\phi\circ u_M)^*\w_M = \phi^*\w_M (\pt_s u_M - \pt_s f\cdot X_H, J_M(\pt_s u_M - \pt_s f\cdot X_H) ) ds\wedge dt. \]
which is non-negative by the condition imposed on $J_M$. (See Definition \ref{condJ}.)
\end{proof}

\begin{lemma}\label{end2}
Suppose $\ul{u}$ satisfies $\ind(\ul{u})=0$. We have
\begin{equation}\label{good>3}
\sum_{\a\in R_{X_{in}}^+} \ma \{ 2\a(a)\} \geqslant \sum_{\a\in R_{X_{out}}^+} \ma \{ 2\a(a)\}
\end{equation}
and the equality holds if and only if $\ul{u}$ is bad. ($\{x\}:=x-\lfloor x\rfloor $ is the fractional part of $x$.)
\end{lemma}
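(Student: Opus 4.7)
The strategy is to run a case analysis based on the type of $\ul{u}$, using Lemma \ref{end1} for the good case and the explicit description of bad quilts from Proposition \ref{B+C/2}, combined with an index formula to absorb the integer parts appearing on the two sides of \eqref{good>3}. Since Proposition \ref{A} gives a trichotomy and Proposition \ref{D} shows ugly quilts have positive index, the hypothesis $\ind(\ul{u})=0$ forces $\ul{u}$ to be good or bad. If $\ul{u}$ is bad, Proposition \ref{B+C/2} determines the input and output completely: the input is $x_q$ with $q+a \in w\wc_{X_0}$ and the output is $\ul{x}_w$ with $q_{out}=q_{in}=q$, so $X_{in}=wX_0=X_{out}$. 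Both sides of \eqref{good>3} then coincide, yielding equality.

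For the good case, I would first expand $\langle q+a, X\rangle$ in a Weyl-covariant form. Applying Assumption \ref{monotone} to $w^{-1}Y$ and reindexing restricted roots by $\a \mapsto w\a$ (which preserves multiplicities and is a bijection $R_{X_0}^+ \to R_X^+$ when $X=wX_0$) yields
\[\langle Y, X\rangle \;=\; \monoconst \sum_{\a \in R_X^+} \ma \cdot 2\a(Y) \qquad\text{for } Y\in \t_{X_0},\ X \in W\cdot X_0.\]
Next I would exploit a key integrality: since $Q^{\vee} = \frac{1}{2}\exp^{-1}(e)\cap \t_0$, we have $2q \in \exp^{-1}(e)$ for $q \in Q^{\vee}$, and the normalisation $4\pi^2$ in the root space definition forces the restricted roots to exponentiate to characters $\exp(X)\mapsto e^{2\pi i\a(X)}$ on the torus $\exp(\t_0)$. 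Hence $2\a(q)\in\ZZ$, so $\{2\a(q+a)\}=\{2\a(a)\}$ and
\[\langle q+a, X\rangle \;=\; \monoconst \sum_{\a \in R_X^+} \ma \lfloor 2\a(q+a)\rfloor \;+\; \monoconst \sum_{\a \in R_X^+} \ma \{2\a(a)\}.\]

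The last step invokes an index formula identifying the grading of $x_q \in CW_b^*(L_0,L_1)$ with the Morse index of the $G/K$-geodesic from $p_0$ in direction $q+a$ over time $1$, which by Jacobi field analysis in the symmetric space equals $\sum_{\a \in R_{X_{in}}^+} \ma \lfloor 2\a(q_{in}+a)\rfloor$; the analogous description governs $y_{w,q_{out}}$. Since $\Phi_C'$ is a degree-zero chain map, $\ind(\ul{u})=0$ forces
\[\sum_{\a\in R_{X_{in}}^+} \ma \lfloor 2\a(q_{in}+a)\rfloor \;=\; \sum_{\a\in R_{X_{out}}^+} \ma \lfloor 2\a(q_{out}+a)\rfloor.\]
Subtracting the two expansions of $\langle q+a, X\rangle$ and using this cancellation of the floor sums, Lemma \ref{end1}'s strict inequality for good quilts translates directly into the strict version of \eqref{good>3}. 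The hard part will be establishing the grading formula for the capped generators $y_{w,q}$: one must track the Maslov contributions from the triple-strip $\ul{x}_w$, the capping disk $[u_q,\g_q]$, and the quilt's seam/fiber geometry, and check that they combine into the asserted floor sum with respect to the chamber $R_{wX_0}^+$. Once this index accounting is in place, the remainder is a routine combination of Lemma \ref{end1} with the index condition.
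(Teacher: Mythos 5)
Your argument follows essentially the same route as the paper's proof: ugly quilts are excluded by Proposition \ref{D}, the bad case gives equality because $X_{in}=X_{out}$, and in the good case the strict inequality of Lemma \ref{end1} is combined with the Weyl-transported form of Assumption \ref{monotone} and the integrality $2\a(q)\in\ZZ$ for $q\in Q^{\vee}$ (a point the paper uses implicitly and you justify correctly) to pass from the inequality of $\langle q+a,X\rangle$'s to the inequality of fractional parts after cancelling the floor sums. The only place you diverge is how that cancellation is obtained: the paper simply invokes the Fredholm index formula \eqref{Index} (Lemma \ref{Ind}), proved separately in Appendix \ref{2lemma} by homotoping the seam condition $TC$ to the split condition $\L_M^{\perp}|_C\oplus\RFF{\CO}$ via Lemma \ref{easy} and computing Maslov indices of the explicit Lagrangian paths of Lemma \ref{tangentspaceofLag}, rather than by the Jacobi-field/Morse-index analysis you sketch. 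Be aware that your appeal to ``$\Phi'_C$ is a degree-zero chain map'' is circular as a justification: that $\Phi'_C$ preserves degree is itself a consequence of expressing $\ind(\ul{u})$ through the gradings of the asymptotes, which is exactly the content of \eqref{Index}; what the lemma needs is that formula together with the hypothesis $\ind(\ul{u})=0$, nothing about $\Phi'_C$. Since \eqref{Index} is available as an independently proved lemma, the piece you flag as ``the hard part'' is already in place, and with it the rest of your argument is correct and coincides with the paper's.
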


\begin{proof}
By Proposition \ref{D}, $\ul{u}$ cannot be ugly. If $\ul{u}$ is bad, then $X_{in}=X_{out}$ so that \eqref{good>3} becomes the equality. If $\ul{u}$ is not bad, then it must be good by Proposition \ref{A}. By Lemma \ref{end1}, we have
\begin{equation}\label{good>2}
\langle q_{in} +a,X_{in}\rangle > \langle q_{out} +a,X_{out}\rangle.
\end{equation}

\noindent Recall the index formula \eqref{Index} which gives
\begin{equation}\label{index4.8}
\sum_{\a\in R_{X_{in}}^+}\ma \lfloor 2\a( q_{in} +a)\rfloor = \sum_{\a\in R_{X_{out}}^+} \ma \lfloor 2\a( q_{out}+ a)\rfloor
\end{equation}
and Assumption \ref{monotone} which is the essential condition for $\CO$ to be monotone:
\begin{equation}\label{endmonotone}
\langle - , X_0\rangle|_{\t_0} = 2\monoconst \sum_{\a\in\Ra{X_0}} \ma \a(-)
\end{equation}
where $\monoconst>0$ is a constant. Clearly \eqref{endmonotone} holds if $X_0$ is replaced by $wX_0$ for any $w\in W$. Hence
\[ \sum_{\a\in R_{X_{in}}^+} \ma \a(q_{in}+a) >  \sum_{\a\in R_{X_{out}}^+} \ma \a(q_{out}+a) .\]
It follows that
\begin{align*}
\sum_{\a\in R_{X_{in}}^+} \ma  \{2 \a(a)\} & = \sum_{\a\in R_{X_{in}}^+} \ma  \left( 2\a(q_{in}+a) - \lfloor 2\a(q_{in}+a) \rfloor\right) \\
& > \sum_{\a\in R_{X_{out}}^+} \ma \left( 2\a(q_{out}+a) - \lfloor 2\a(q_{out}+a) \rfloor\right) \\
& = \sum_{\a\in R_{X_{out}}^+} \ma  \{ 2\a(a)\} .
\end{align*}
\end{proof}

Define $\ell':W\ra\RR$ by
\[ \ell'(w) := \sum_{\a\in\Ra{wX_0}} \ma \{2 \a(a)\}.\]

\begin{lemma}\label{fil1} For any $q\in Q^{\vee}$,
\[ \Phi_C'(x_q)\in \pm y_{w_q,q} + \bigoplus_{ {\substack{(w,q')\in W\times Q^{\vee} \\ \ell'(w)<\ell'(w_q)}}  }\coeff\langle y_{w,q'}\rangle\]
where $w_q\in W$ is the unique element such that $q+a\in w_q\wc_{X_0}$.
\end{lemma}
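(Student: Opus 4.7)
The strategy is to enumerate the index-zero quilts with input $x_q$, apply the classification of Propositions \ref{A}--\ref{D} together with the energy inequality of Lemma \ref{end2} to cut the contributions down to one ``leading'' term, and then pin down that leading term.

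By construction, $\Phi_C'(x_q)=\sum_{(w,q')} N_{w,q'}\, y_{w,q'}$, where $N_{w,q'}\in\coeff$ is the signed count of index-zero quilts $\ul{u}$ with input $x_q$ and output $y_{w,q'}$. Proposition \ref{A} sorts each such $\ul{u}$ into good, bad, or ugly; Proposition \ref{D} discards the ugly ones (positive index) and provides the compactness needed to make $N_{w,q'}$ well-defined, so only good and bad quilts contribute. For any such quilt the hypothesis $q+a\in w_q\wc_{X_0}$ forces $X_{in}=w_q X_0$, while the choice of output gives $X_{out}=wX_0$. Substituting into Lemma \ref{end2} yields $\ell'(w_q)\geqslant \ell'(w)$, with equality iff $\ul{u}$ is bad. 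Hence $N_{w,q'}=0$ whenever $\ell'(w)>\ell'(w_q)$, and for $\ell'(w)=\ell'(w_q)$ only bad quilts contribute.

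Next I isolate the bad contribution. By Proposition \ref{comparesymplform}(1), every fiber of $\pj$ has the form $T_0\times w'\wc_{X_0}\subset T^*T_0$, so a bad quilt automatically has $u_{\CO}\equiv\text{const}$ and $\im u_M\subset T^*T_0$, i.e.\ it fits the hypotheses of Proposition \ref{B+C/2}. That proposition produces, for each $w\in W$, a unique quilt with input $x_q$ and output $\ul{x}_w$ satisfying these conditions, and it is bad precisely when $q+a\in w\wc_{X_0}$, i.e.\ when $w=w_q$. Proposition \ref{C} then says this unique bad quilt is regular, so it contributes $\pm 1$ to a single coefficient $N_{w_q,q^{*}}$.

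The crucial remaining step is to show $q^{*}=q$. Since $\S_M$ is simply connected, $u_M\colon\S_M\to T^*T_0$ admits a unique lift $\tilde{u}_M\colon\S_M\to T^*\t_0$ once one normalizes the lift at a single point; I fix this normalization so that at the input strip-like end $\tilde{u}_M$ converges to the lift of $x_q$ given by $t\mapsto (t(q+a),q+a)$. The boundary arc $\pt_{L_1}\S_M$ is a connected curve sent by $\tilde{u}_M$ into the preimage of $L_1\cap T^*T_0$, which in $T^*\t_0$ is the disjoint union $\bigsqcup_{q'\in Q^{\vee}}\{(a+q',Y):Y\in\t_0\}$. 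Its input endpoint lifts to $(a+q,a+q)$, in the component indexed by $q$; its output endpoint lifts to $(a+q^{*},w_q X_0)$, in the component indexed by $q^{*}$; connectedness of the arc forces $q^{*}=q$. (The analogous check on $\pt_{L_0}\S_M$ is automatic since both endpoints lie in the component indexed by $0$.) Combining the previous paragraphs with this identification gives the desired membership.

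I expect Step (3)---the identification $q^{*}=q$---to be the main obstacle, since the bad quilt's output capping disk is prescribed only indirectly through the trace of $\phi\circ u_M$ on the seam, which forces one to argue by lifting to the universal cover $T^*\t_0$ and tracking an entire boundary arc rather than the individual \Ham chord pieces.
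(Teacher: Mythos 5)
Your proof is correct and takes essentially the same route as the paper: the paper's two-line argument likewise combines Propositions \ref{A}, \ref{C}, \ref{D} and \ref{B+C/2} with Lemma \ref{end2}, and your Step (3) merely spells out the ``easy to see'' identification of the bad quilt's output capping class as $q$. One nitpick: not every fiber of $\pj$ lies in $T^*T_0$ (only those over points of $W\cdot X_0$ do; a general fiber sits in $T^*(gT_0)$), but the fiber containing the input chord $x_q$ is the fiber over $w_qX_0$, which does lie in $T^*T_0$, and that is all your argument needs.
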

\begin{proof}
It is not hard to see that the unique bad quilt from Proposition \ref{B+C/2} with input $x_q$ contributes to the leading term $\pm y_{w_q,q}$. The rest follows from Lemma \ref{end2}.
\end{proof}

\begin{lemma}\label{filY}
The quasi-isomorphism
\[Y:Hom_{\AA}^*( (L_0,C),(L_1,C))\ra Hom_{\AA}^*(\RFF{0},\RFF{1})\]
defined in Section \ref{capdisk} satisfies
\begin{equation}\label{filYeq}
Y(y_{w,q})\in \pm y_{w,q} + \bigoplus_{ {\substack{(w',q')\in W\times Q^{\vee} \\ \ell'(w')<\ell'(w)}}  }\coeff\langle y_{w',q'}\rangle
\end{equation}
for any $w\in W$ and $q\in Q^{\vee}$.
\end{lemma}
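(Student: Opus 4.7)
The plan is to reduce to the two component maps $Y_1$ and $Y_2$ of $Y = Y_1 \circ Y_2$, since the filtration subspace spanned by $\{y_{w',q'} : \ell'(w') < \ell'(w)\}$ is preserved by each, so it suffices to show that each of $Y_1$ and $Y_2$ sends $y_{w,q}$ to $\pm y_{w,q}$ modulo strictly smaller $\ell'$-terms. I would work with $Y_2$; the case of $Y_1$ is entirely parallel, because the configurations in Figures 1 and 2 differ only in which boundary arc carries the label $L_i$ versus $\RFF{i}$, and the essential geometric data -- a single $\CO$-patch glued along $C$-seams to $M$-patches -- is identical.

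My first step would be to transfer the good/bad/ugly trichotomy of Proposition \ref{A} and the identity-term argument of Proposition \ref{B+C/2} to the moduli defining $Y_2$. A boundary arc of the $\CO$-patch labeled by $\RFF{1} = L_1 \circ C$ behaves like a seam whose $M$-side is forced into the unique fiber of $\sq$ over that arc, so the trichotomy applies verbatim. The Riemann-mapping argument on the universal cover of $T^*T_0$ used in Proposition \ref{B+C/2} then produces, for each input $y_{w,q}$, exactly one constant-on-$\CO$ configuration with $M$-image in $T^*T_0$ matching the prescribed input and the combinatorially determined output; the output label is again $(w,q)$ because $X_{\mathrm{out}} = X_{\mathrm{in}} = w X_0$ and the $Q^{\vee}$-class is preserved through $T^*T_0$. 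This bad configuration contributes the leading term $\pm y_{w,q}$ to $Y_2(y_{w,q})$.

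For a good configuration $\ul{v} = (v_M, v_\CO)$ I would run the Stokes/action argument of Lemma \ref{end1} essentially unchanged: after removing small disks around the finitely many intersections of $v_M$ with $M \setminus \UU$ (Lemma \ref{codim2}), integrate $(\phi \circ v_M)^* \w_M + v_\CO^* \w_\CO$ over the domain, use Stokes, observe that boundary integrals along the $L_0$- and $\RFF{1}$-labeled arcs vanish by Lagrangianness, and bound the small-disk contributions by the same argument on $\mb{p} \circ \phi \circ v_M$. Definition \ref{condJ} then yields $\int (\phi \circ v_M)^* \w_M \geq 0$ while $\int v_\CO^* \w_\CO > 0$, producing
\[
\langle q_{\mathrm{in}}+a, X_{\mathrm{in}}\rangle > \langle q_{\mathrm{out}}+a, X_{\mathrm{out}}\rangle.
\]
Combining with the index-zero condition, the index formula \eqref{index4.8}, and the monotonicity identity \eqref{endmonotone}, exactly as in Lemma \ref{end2}, gives $\ell'(w_{\mathrm{out}}) < \ell'(w_{\mathrm{in}}) = \ell'(w)$. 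Ugly configurations are ruled out by the index argument underlying Proposition \ref{D}, and composing $Y_1 \circ Y_2$ then produces \eqref{filYeq} immediately.

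The hard part will be verifying the analogs of Propositions \ref{B}, \ref{C}, \ref{D} for these new moduli: generic regularity of good configurations using $\codim(M \setminus \UU) \geq 2$, index-zero regularity of the bad ones, and strict positivity of the index for the ugly ones. These should follow by direct adaptation of Appendix \ref{trans}, since adding an embedded $\RFF{i}$-boundary does not interfere with the linearized operator on the $M$-patches, does not disturb the unique continuation step of Lemma \ref{UCLemma}, and leaves the Maslov index computation \eqref{Index} intact. That adaptation is where the real labor of a rigorous proof would lie, but there is no new geometric input beyond what has already been developed.
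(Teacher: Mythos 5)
Your route diverges from the paper's in two substantive ways, and one of them hides a genuine gap. The paper's proof of Lemma \ref{filY} is deliberately short: lift the $\CO$-portion of a Figure 1 or Figure 2 configuration to $\C$, glue with the $M$-portion, and run the Stokes/index argument of Lemma \ref{end2}; the leading term $\pm y_{w,q}$ is contributed by the \emph{constant} solutions, whose regularity is not re-derived but quoted from \cite{LL}. Your proposal instead imports the entire good/bad/ugly apparatus and claims the leading term comes from a non-constant "bad" configuration produced by the Riemann-mapping argument of Proposition \ref{B+C/2}. That argument does not transfer: it is tailored to the $\Phi'_C$ domain, where the three boundary arcs $L_0$, $L_1$, $T_0\times\{wX_0\}$ lift to three pairwise transverse affine Lagrangians $\afflag_1,\afflag_2,\afflag_3$ in $T^*\t_0$ and the identity contribution is a genuine triangle. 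For the $Y_2$ domain, once $u_{\CO}\equiv wX_0$ the bottom $M$-strip is asymptotic to the \emph{same} chord $x_{w,-}$ at both ends and the action argument forces it (and the $L_1$-cap) to be constant along its length; the identity contribution is the honest constant solution, not a triangle. The regularity of such constant quilted solutions is exactly the delicate point (cf.\ also \cite{WW1.5}) that \cite{LL} was written to settle, and it does not "follow by direct adaptation of Appendix \ref{trans}": the regularity proof of Proposition \ref{C} reduces to the regularity of the Riemann-mapping solution, which has no analogue here. Deferring this is the gap.

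The second divergence is your insistence on $J_M\in\J_A(M)$ and the $\phi$-pushforward of Lemma \ref{end1}. The paper explicitly notes this is unnecessary for $Y$, and for good reason: the $\phi$-pushforward is needed for $\Phi'_C$ only because the wrapped input $x_q$ has action $\tfrac12|q+a|^2$ rather than the filtration quantity $\langle q+a,X_{in}\rangle$, and collapsing along the fibers of $\pj$ is what converts one into the other at the cost of the semi-positivity condition in Definition \ref{condJ}. For the $Y$ moduli both input and output generators already have $\CO$-component a point of $W\cdot X_0$ with a specified capping class, so Stokes applied to the lifted-and-glued map computes the action difference as the sum of the \emph{genuine} (Hamiltonian-corrected) areas of the patches, each non-negative for any tame $J$ and strictly positive on a non-constant $\CO$-patch. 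Adopting the smaller class $\J_A(M)$ buys you nothing and forces precisely the re-proof of Propositions \ref{A}--\ref{D} for new moduli that you flag as "the real labor" --- labor the paper's argument is structured to avoid. I would redo the write-up with constant solutions as the leading term (citing \cite{LL} for their regularity and for transversality with unconstrained tame $J$ on $M$) and the unweighted Stokes argument for the strict drop in $\ell'$ on non-constant solutions.
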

\begin{proof}
Recall $Y=Y_1^{-1}\circ Y_2$ where $Y_1$ and $Y_2$ are defined in Figure \hyperlink{fig1}{1} and Figure \hyperlink{fig2}{2} respectively. If $Y_1$ is shown to have the form \eqref{filYeq}, then it is bijective, and its inverse also has the same form. For any solution described in these figures, we lift the portion which lands in $\CO$ to $\C$ and glue it with the portion which lands in $M$. Then the result follows from similar arguments for the proof of Lemma \ref{end1} and Lemma \ref{end2}. Notice that we do not require the almost complex structures on $M$ to lie in $\J_A(M)$, and that constant solutions are regular, as proved in \cite{LL}.
\end{proof}

\begin{lemma}\label{filPSS} Assume $|a|$ small and $a\in \mr{\wc}_{X_0}$. Then the quasi-isomorphism
\[PSS: Hom_{\AA}^*(\RFF{0},\RFF{0})\ra QC^*(\RF)\]
defined in Section \ref{capdisk} satisfies
\begin{equation}
PSS(y_{w,q})\in \pm y_{w,q} + \bigoplus_{ {\substack{(w',q')\in W\times Q^{\vee} \\ \ell'(w')<\ell'(w)}}  }\coeff\langle y_{w',q'}\rangle
\end{equation}
for any $w\in W$ and $q\in Q^{\vee}$.
\end{lemma}
\begin{proof} As in the proof of Lemma \ref{end2}, it suffices to obtain \eqref{good>2} for non-constant solutions and \eqref{index4.8}. Let us deal with the latter first. The index condition for solutions described in Figure \hyperlink{fig3}{3} is given by
\begin{equation}\nonumber
\sum_{\a\in\Ra{X_{in}} }\ma \lfloor 2\a(q_{in}+a)\rfloor -  \sum_{\a\in\Ra{X_{out}} }2 \ma\a(q_{out}) + \ell(w_{out})=0
\end{equation}
where $X_{out}=w_{out}X_0$ and $\ell:W\ra \ZZ_{\geqslant 0}$ is the standard length function on the Weyl group $W$. It thus suffices to show
\[\ell(w_{out}) = -  \sum_{\a\in\Ra{X_{out}} } \ma \lfloor 2\a(a)\rfloor, \]
and this follows from
\[ \ell(w_{out})=\sum_{ \substack{ \a\in\Ra{X_{out}}\\ \a(a)<0 }}\ma \]
and the smallness assumption on $|a|$ which implies
\[  \lfloor 2\a(a)\rfloor = \left\{
\begin{array}{rl}
0, & \a(a)>0\\
-1, & \a(a)<0
\end{array}
\right. .\]

Now, we prove \eqref{good>2}. For each solution, denote the rightmost punctured disk by $u_0$ and the other disks by $u_1,\ldots,u_k$. Recall we have taken $h:=\langle -,a\rangle$ for the \Ham defining $Hom_{\AA}^*(\RFF{0},\RFF{0})$ and its restriction to $\RF$ for the Morse function for defining $QC^*(\RF)$. According to \cite{BCP}, these disks solve
\[ (du_0-\b(s)X_h\otimes dt)^{0,1}=0,\quad (du_i)^{0,1}=0,~i=1,\ldots, k  \]
where $\b:\RR\ra [0,1]$ is a fixed cut-off function which increases from 0 near $-\infty$ to 1 near $+\infty$. There are two cases.
\begin{enumerate}
\item[(H)] $k\geqslant 1$ or $\int u_0^*\w_{\CO}>0$;
\item[(L)] $k=0$ and $\int u_0^*\w_{\CO}\leqslant 0$.
\end{enumerate}

In case (H), the sum $\sum_{i=0}^k\int u_i^*\w_{\CO}$ is greater than a positive constant which is independent of $a$, by the monotonicity of $\RF$. It is not hard to show that this sum is equal to $\langle q_{in}, X_{in}\rangle -\langle q_{out}, X_{out}\rangle $, and hence the inequality \eqref{good>2} holds if $|a|$ is small enough.

In case (L), the geometric energy $E(u_0):=\int |du_0-\b(s)X_h\otimes dt|^2$ of $u_0$ is small if $|a|$ is small. We apply an argument of Oh \cite{Oh} to show that any such solution must be contained in a fixed Weinstein neighbourhood of $\RF$. This allows us to define an invariant by counting these solutions. By choosing particular perturbation data as in \cite{Floer}, we see that this invariant contributes only to the leading term of \eqref{filYeq}.
\end{proof}

\subsection{Central invertible elements of the target} \label{pfB}
Next we look at generators $y_{w,q}$ of $QC^*(\RF)$.
\begin{lemma}\label{prodform} Assume
\begin{enumerate}[(i)]
\item $G/K$ is $Spin$ if $\coeff\ne\ZZ_2$; and
\item $a\in \mr{\wc}_{X_0}$.
\end{enumerate}
Then the following hold:
\begin{enumerate}[(1)]
\item For any $q\in Q^{\vee}$, $y_{e,q}$ is closed with respect to the Floer differential of $QC^*(\RF)$.
\item Each of the chain maps
\[ -\star y_{e,q},~y_{e,q}\star -:QC^*(\RF)\ra QC^*(\RF)\]
sends $y_{w,q'}$ to $\pm y_{w,q'+w(q)}$ for any $w\in W$ and $q'\in Q^{\vee}$. In particular, $y_{e,q}$ descends to an invertible element of $QH^*(\RF)$.
\item The orientations on the determinant lines (see Appendix \ref{sign}) associated to the generators $y_{w,q}$ can be chosen consistently such that the sign in (2) is always $+1$. In particular, each $y_{e,q}$ descends to a central element of $QH^*(\RF)$.
\end{enumerate}
\end{lemma}
\begin{proof} Let us put aside the sign issue first. Observe that, under assumption (ii), $X_0$ is the unique critical point of the Morse function $\langle -, a\rangle$ whose descending submanifold has the ambient dimension. Biran-Cornea \cite{BCP} proved that, in the absence of \capping disks, this point is the chain-level unit. In the presence of \capping disks, consider the fiber bundle $\sq:\C|_{\RF}\ra \RF$. Recall that it has torus fibers and the fiber over each $wX_0$ is equal to a constant section of $T^*T_0\subset T^*(G/K)$ over $T_0$. By definition, $y_{w,q}$ corresponds to the homotopy class represented by a loop in $\sq^{-1}(wX_0)$ whose projection in $T_0$ is homotopic to the loop $t\mapsto\exp(tq)$.

Therefore, part (1) over $\ZZ_2$ and part (2) in general follow, if we can show that for any path $\l_t$ in $\RF$ joining $X_0$ and $wX_0$, the parallel transport along $\l_t$ brings a loop $\g_q$ over $X_0$ homotopic to $\exp(tq)$ to a loop over $wX_0$ which is homotopic to $\exp(tw(q))$. Write $\l_t=k_t\cdot X_0$ for some path $k_t$ in $K$ starting at the identity. Then $k_t\cdot \g_q$ is a cylinder in $\C|_{\RF}$ over $\l_t$. It suffices to show that $k_1\cdot \g_q$ is homotopic to $\exp(tw(q))$, and this follows from Lemma \ref{Liewk2}(1).

Suppose now that signs are taken into account. Denote by $\VTT$ and $\VT$ the vertical tangent bundles of the fiber bundles $\pi:\C\ra G/K$ and $\sq:\C\ra \CO$ respectively. We have
\begin{equation}\label{pfB1}
\pi^*(T(G/K))\simeq \VTT\oplus\VT.
\end{equation}
Notice that $\VTT$ is a Lagrangian subbundle of $\sq^*T\CO$ so that every loop $\g$ lying in a fiber $\sq^{-1}(X)$ is naturally endowed with a Cauchy-Riemann operator defined by $(T_X\CO,\VTT|_{\g})$. We claim that for any loop $\g$ in $\RF$, $\VT|_{\sq^{-1}(\g)}$ is trivial. Indeed, this follows from the fact that every torus is parallelizable and the above argument which uses Lemma \ref{Liewk2}(1).

It follows that, by assumption (i) and \eqref{pfB1}, $\VTT|_{\sq^{-1}(wX_0)}$ is $Spin$ for any $w\in W$ and $Spin$ structures on these bundles can be chosen consistently such that they are preserved under parallel transport along any path in $\RF$. By a standard result (e.g. \cite{FOOO, WWorient}), there exists a natural bijective correspondence between the set of orientations on the determinant lines associated to loops in the fibers of $\sq$ and the set of $Spin$ structures of $\VTT$ over these loops in which gluing of Cauchy-Riemann operators corresponds to concatenation of $Spin$ structures. This proves parts (1) and (3), and hence the lemma.
\end{proof}

\begin{lemma} \label{nosign} The orientations on the determinant lines associated to the generators can be chosen such that the signs in Lemma \ref{fil1}, Lemma \ref{filY} and Lemma \ref{filPSS} are equal to $+1$.
\end{lemma}
\begin{proof}
Observe that for each of these lemmas, the function assigning to every generator of the source of the map involved the corresponding leading term in the target is injective. (For example, the function for Lemma \ref{fil1} sends $x_q$ to $y_{w_q,q}$.) Therefore, the orientations for the generators of $QC^*(\RF)$ which are fixed in Lemma \ref{prodform} induce, in a unique way, the orientations for the generators of other cochain complexes such that the signs of the leading terms are equal to $+1$.
\end{proof}
\subsection{The final step} \label{pfC}
We now prove Theorem \ref{main} and Theorem \ref{maincor}. By Proposition \ref{nodiffprop}, the results in Lemma \ref{fil1}, Lemma \ref{filY}, Lemma \ref{filPSS}, Lemma \ref{prodform} and Lemma \ref{nosign} hold on cohomology level. Notice that $HW_b^*(L_0,L_1)$ is a right $HW_b^*(L_0,L_0)$-module.  Define $\rmor':HW_b^*(L_0,L_1)\ra QH^*(\RF)$ to be $H^*(PSS\circ Y\circ \Phi_C')$\footnote{$PSS\circ Y$ is well-defined as the Hamiltonian for the pair $(\RF_0,\RF_0)$ is chosen to be $X\mapsto \langle X,a\rangle$.} and $\rmor:HW_b^*(L_0,L_0)\ra QH^*(\RF)$ in a similar way. Then $\rmor$ is a $\coeff$-algebra homomorphism and $\rmor'$ is a module homomorphism with respect to $\rmor$ where $QH^*(\RF)$ acts on itself by right multiplication. We claim that the element $x_0~(0\in Q^{\vee})$ is a free generator of $HW_b^*(L_0,L_1)$ as $HW_b^*(L_0,L_0)$-module. Indeed, by \cite{Ab_JSG}, there are isomorphisms
\begin{align*}
HW_b^*(L_0,L_0) & \ra H_{-*}(\O_{p_0\to p_0}(G/K))\\
HW_b^*(L_0,L_1) & \ra H_{-*}(\O_{p_0\to p_1}(G/K))
\end{align*}
where the first map is a $\coeff$-algebra isomorphism and the second map is a module isomorphism with respect to the first map. Our claim becomes obvious if we transform our problem to one involving the targets of these maps. On the other hand, observe that $\ell':W\ra \RR$ attains its minimum precisely at $e\in W$. By Lemma \ref{fil1}, Lemma \ref{filY}, Lemma \ref{filPSS} and Lemma \ref{nosign}, $\rmor'(x_0)=y_{e,0}$ which is clearly a free generator of $QH^*(\RF)$ as $QH^*(\RF)$-module. Hence, Theorem \ref{main} and Theorem \ref{maincor} will follow from Proposition \ref{final} below which deals with the corresponding problems for the module homomorphism $\rmor'$ (instead of $\rmor$). Before stating it, we put $R_1:=HW_b^*(L_0,L_0)$, $R_2:=QH^*(\RF)$, $N_1:=HW_b^*(L_0,L_1)$ and $N_2:=QH^*(\RF)$. Define
\[ S_1:=\{x_q|~q\in Q^{\vee},  w_q=e\}\quad\text{ and }\quad S_2:=\{y_{e,q}|~q\in Q^{\vee},  w_q=e\} \]
considered as subsets of $R_1$ and $R_2$ via the isomorphisms $R_1\simeq N_1$ and $R_2\simeq N_2$ induced by the multiplications by $x_0$ and by $\rmor'(x_0)$ respectively. Notice that $S_2$ consists of central and invertible elements of $R_2$, by Lemma \ref{prodform}.

\begin{proposition}\label{final}$~$
\begin{enumerate}
\item $S_1$ is a multiplicative subset consisting of central elements and $\rmor(S_1)=S_2$.
\item $S_1^{-1}\rmor': S_1^{-1}N_1\ra N_2$ is a module isomorphism.
\item The sub-$\coeff$-algebra $\coeff[S_1]$ of $R_1$ generated by $S_1$ is Noetherian, and $N_1$ is a finitely generated $\coeff[S_1]$-module.
\end{enumerate}
\end{proposition}

\begin{proof} We endow $N_1$ and $N_2$ with filtrations $\{F^{\mu}N_1\}_{\mu\in\RR}$ and $\{F^{\mu}N_2\}_{\mu\in\RR}$ respectively, where
\[F^{\mu}N_1:=\bigoplus_{\substack{q\in Q^{\vee}\\ \ell'(w_q)\leqslant \mu}} \coeff\langle x_q\rangle \quad\text{ and }\quad  F^{\mu}N_2:=\bigoplus_{\substack{(w,q)\in W\times Q^{\vee}\\ \ell'(w)\leqslant \mu}} \coeff\langle y_{w,q}\rangle. \]
Notice that each of $F^{\mu}N_1$ and $F^{\mu}N_2$ jumps for only finitely many $\mu$. By Lemma \ref{fil1}, Lemma \ref{filY} and Lemma \ref{filPSS}, $\rmor'$ preserves these filtrations. Denote by $gr^{\mu}\rmor'$ the induced linear map between the associated graded pieces
\[ gr^{\mu}\rmor': gr^{\mu}N_1\ra gr^{\mu}N_2 \]
where $gr^{\mu}N_i:= F^{\mu}N_i/\bigcup_{\lambda<\mu} F^{\lambda} N_i$. By looking at the leading terms, we find that $gr^{\mu}\rmor'$ is injective for any $\mu$, and hence $\rmor'$ is injective, by a filtration argument. Since $\ell':W\ra \RR$ attains its minimum precisely at $e\in W$, it follows that, by the same set of lemmas, $\rmor(S_1)=S_2$. Since $\rmor$ is injective (as $\rmor'$ is) and $S_2$ is a multiplicative subset consisting of central elements (Lemma \ref{prodform}), $S_1$ is a multiplicative subset consisting of central elements as well. This completes the proof of (1).

By Lemma \ref{prodform}, each $F^{\mu}N_2$ is a sub-$\coeff[S_2]$-module. Since $(\rmor')^{-1}(F^{\mu}N_2)=F^{\mu}N_1$ for any $\mu$ (by inspection), each $F^{\mu}N_1$ is a sub-$\coeff[S_1]$-module so that the homomorphism
\[S_1^{-1}(gr^{\mu}\rmor'): S_1^{-1}(gr^{\mu}N_1)\ra gr^{\mu}N_2\]
is well-defined. By Lemma \ref{last}(1) below, $S_1^{-1}(gr^{\mu}\rmor')$ is an isomorphism, and hence $S_1^{-1}\rmor'$ is an isomorphism, by a filtration argument. This proves (2).

Finally, notice that $\coeff[S_1]\simeq \coeff[S_2]\simeq \coeff[S_e]$ where $S_e$ is defined in Lemma \ref{last}. Moreover, each $gr^{\mu}N_1$ is isomorphic as $\coeff[S_e]$-modules to a finite direct sum of $\coeff[S_e]$-modules each of the form $\coeff[S_w]$ (defined below) for some $w\in W$. Thus, (3) follows from Lemma \ref{last}(2) and (3).
\end{proof}

We come back to the lemma used in the proof of Proposition \ref{final}. Recall the Weyl group $W$, the lattice $Q^{\vee}$, the dominant Weyl chamber $\wc_{X_0}$ and the generic element $a$ which lies in the interior $\mr{\wc}_{X_0}$ of $\wc_{X_0}$. For each $w\in W$, define
\[ S_w:= \{q\in Q^{\vee}|~q+w^{-1}a\in \wc_{X_0}\}.\]
Observe that $S_e$ is a semi-group, $S_w\subseteq S_e$ and $S_e+S_w\subseteq S_w$ for any $w\in W$.
\begin{lemma} \label{last} $~$
\begin{enumerate}
\item $S_w-S_e=Q^{\vee}$ for any $w\in W$.
\item $S_e$ is a finitely generated semi-group.
\item $\coeff[S_w]$ is a finitely generated $\coeff[S_e]$-module for any $w\in W$.
\end{enumerate}
\end{lemma}

\begin{proof}
We begin with an observation. Let $\a_1,\ldots, \a_r\in \Ra{X_0}$ be the positive roots which define the boundary walls of $\wc_{X_0}$. Then
\[S_e=\{q\in Q^{\vee}|~\a_i(q)\geqslant 0 ~\forall i\}=\wc_{X_0}\cap Q^{\vee},\]
and for each $w\in W\setminus\{e\}$, there exists a non-empty subset $I_w\subseteq \{1,\ldots, r\}$ such that
\[S_w=\{q\in Q^{\vee}|~\a_i(q)> 0 ~\forall i\in I_w\}\cap S_e.\]
In particular, each $S_w$ contains $\mr{\wc}_{X_0}\cap Q^{\vee}$. This shows (1), since any element of $Q^{\vee}$ becomes regular (i.e. an element of $\mr{\wc}_{X_0}$) after adding a ``sufficiently regular'' element of $Q^{\vee}$. To show (2), observe that each $2\a_i(Q^{\vee})$ is a subgroup of $\ZZ$, by the root space decomposition \eqref{rootsp}. It follows that Gordon's lemma \cite{Fu} implies the result. Finally, (3) simply follows from the fact that $\coeff[S_w]$ is a sub-$\coeff[S_e]$-module of $\coeff[S_e]$.
\end{proof}

\appendix
\section{Some technical lemmas} \label{2lemma}
\subsection{The indices}
\begin{lemma}\label{easy} Let $V,W$ be symplectic vector spaces. Let $L\in\Lag(V) $ and $\RF\in\Lag(W)$ be any Lagrangian subspaces of $V$ and $W$ respectively. The space
\[\lagcor_{L\ra\RF}:=\{C\in \Lag(V^-\oplus W)|~L\circ C=\RF\}\]
is weakly contractible. In particular, if $C\in \lagcor_{L\ra\RF}$ and $L^{\perp}\in \Lag(V)$ such that $L\cap L^{\perp}=\mathbf{0}$, then there is a path in $\lagcor_{L\ra\RF}$, which is unique up to homotopy, joining $C$ and $L^{\perp}\oplus \RF$.
\end{lemma}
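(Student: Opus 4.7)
The plan is to parametrise $\lagcor_{L\ra\RF}$ explicitly as a real vector space whose origin is $L^{\perp}\oplus\RF$, for any chosen Lagrangian complement $L^{\perp}$. I interpret the condition $L\circ C=\RF$ as including embedded composition, i.e.\ the projection $\pi_W\colon C\cap(L\oplus W)\to W$ is injective with image $\RF$; this is the standard convention in the symplectic category.

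Fix any Lagrangian complement $L^{\perp}$ of $L$ in $V$ and any Lagrangian complement $\RF^{\perp}$ of $\RF$ in $W$, so that $V^{-}\oplus W=(L\oplus\RF^{\perp})\oplus(L^{\perp}\oplus\RF)$ is a splitting into two Lagrangian complements. The summand $L^{\perp}\oplus\RF$ lies in $\lagcor_{L\ra\RF}$: since $L\cap L^{\perp}=\mathbf{0}$, one checks directly that $(L^{\perp}\oplus\RF)\cap(L\oplus W)=\RF$, on which $\pi_W$ is the identity, so the composition is embedded with value $\RF$. Next I claim every $C\in\lagcor_{L\ra\RF}$ is transverse to $L\oplus\RF^{\perp}$. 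Indeed, if $(v,\hat w)\in C$ with $v\in L$ and $\hat w\in\RF^{\perp}$, then $\hat w\in L\circ C=\RF$, hence $\hat w\in\RF\cap\RF^{\perp}=\mathbf{0}$; embeddedness of the composition then forces $v=\mathbf{0}$.

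Consequently every $C\in\lagcor_{L\ra\RF}$ is the graph of a unique linear map $T\colon L^{\perp}\oplus\RF\to L\oplus\RF^{\perp}$. Using the symplectic identifications $L^{\perp}\cong L^{*}$ and $\RF^{\perp}\cong\RF^{*}$, the Lagrangian condition on $\mathrm{graph}(T)$ translates, via the pairing $\langle(\xi,w),(x,\eta)\rangle=\xi(x)+\eta(w)$, into the requirement that $T$ be self-adjoint. Writing $T$ as a $2\times 2$ block matrix with entries $T_{11}\colon L^{*}\to L$, $T_{12}\colon\RF\to L$, $T_{21}\colon L^{*}\to\RF^{*}$, $T_{22}\colon\RF\to\RF^{*}$, self-adjointness reads: $T_{11}$ is a symmetric element of $\mathrm{Hom}(L^{*},L)$, $T_{22}$ is a symmetric element of $\mathrm{Hom}(\RF,\RF^{*})$, and $T_{21}=T_{12}^{*}$. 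A direct computation with the defining equations of $\mathrm{graph}(T)$ gives $L\circ C=\{(w,T_{22}w):w\in\RF\}$, so the hypothesis $L\circ C=\RF$ is equivalent to $T_{22}=0$. Hence $\lagcor_{L\ra\RF}$ is homeomorphic to the real vector space $\mathrm{Sym}^{2}L\oplus\mathrm{Hom}(\RF,L)$, with $L^{\perp}\oplus\RF$ corresponding to the origin; in particular it is contractible, hence weakly contractible.

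The second assertion is then an immediate consequence of the first: the space is path-connected and simply connected, so any path from $L^{\perp}\oplus\RF$ to $C$ inside $\lagcor_{L\ra\RF}$ (which exists since $L^{\perp}\oplus\RF$ lies there by the first paragraph) is unique up to homotopy. I do not anticipate any serious obstacle; the main care required is the bookkeeping of the symplectic pairings and the sign from $V^{-}$ in the third paragraph, which otherwise reduces to routine symplectic linear algebra.
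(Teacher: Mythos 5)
Your proof is correct, but it takes a different route from the paper's. You fix an auxiliary Lagrangian complement $\RF^{\perp}$ of $\RF$ and show that every $C\in\lagcor_{L\ra\RF}$ is transverse to $L\oplus\RF^{\perp}$, so that the whole space sits inside a single affine chart of $\Lag(V^-\oplus W)$, namely graphs of self-adjoint maps $T\colon L^{\perp}\oplus\RF\to L\oplus\RF^{\perp}$; the composition condition then cuts out the linear subspace $T_{22}=0$, giving a homeomorphism with a vector space and hence genuine (not just weak) contractibility, from which the second assertion follows. The paper instead makes no auxiliary choice in $W$: it parametrises $C$ by a lift $\tilde{\RF}$ of $\RF$ to the coisotropic $L\oplus W$ together with a Lagrangian complement of the image of $L$ in the reduction $\tilde{\RF}^{\perp\w}/\tilde{\RF}$, and concludes weak contractibility because this exhibits $\lagcor_{L\ra\RF}$ as a bundle of contractible (affine) fibers over a contractible (affine) base. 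The two parametrisations encode essentially the same data ($T_{12}$ corresponds to the lift $\tilde{\RF}$, $T_{11}$ to the choice of complement), so your argument can be viewed as a globalisation of the paper's into one explicit chart; what it buys is a stronger conclusion and a concrete linear model, at the cost of the choice of $\RF^{\perp}$ and some sign bookkeeping (the cross-term condition $T_{21}=\pm T_{12}^{*}$ depends on the identifications, but this does not affect the conclusion). One further remark: your standing assumption that $L\circ C=\RF$ means \emph{embedded} composition (equivalently $C\cap(L\oplus 0)=\mathbf{0}$) is genuinely needed for your transversality claim --- e.g.\ $C=L\oplus\RF$ satisfies the set-theoretic identity but is not transverse to $L\oplus\RF^{\perp}$ --- but the paper's own parametrisation only produces such $C$ as well, and this is the convention under which the lemma is applied, so the interpretations agree.
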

\begin{proof}
Observe that every $C\in\lagcor_{L\ra\RF}$ is completely determined by two choices of subspaces $\tilde{\RF}$ and $L'$ where
\begin{enumerate}
\item $\tilde{\RF}$ is a lift of $\RF$ with respect to the symplectic quotient $L\oplus W\ra W$; and
\item $L'$ is a Lagrangian complement of the image of $L$ in the symplectic quotient $\tilde{\RF}^{\perp\w}/\tilde{\RF}$.
\end{enumerate}
($C$ is then the pre-image of $L'$.) Since the space of these choices is a fiber bundle over a contractible base with contractible fibers, it follows that $\lagcor_{L\ra\RF}$ is weakly contractible.
\end{proof}

\begin{lemma} \label{tangentspaceofLag} Let $X\in \t_0$. The tangent space $T_{X_0}\RFF{\exp(X)\cdot p_0}$ of the Lagrangian $\RFF{\exp(X)\cdot p_0}$ at $X_0$ is equal to
\[\bigoplus_{\a\in\Ra{X_0}} \left\{\left. \cos(2\pi \a(X))\eta - \sin(2\pi \a(X)) J_{\CO,X_0}(\eta) \right|~\eta\in\pa{X_0} \right\}  \]
where $J_{\CO,X_0}\in End(T_{X_0}\CO)$ is defined by $J_{\CO,X_0}|_{\gg_{X_0,\a}}:=\frac{-1}{2\pi\a(X_0)}\ad(X_0)|_{\gg_{X_0,\a}}$\footnote{Notice that $Y\mapsto J_{\CO,Y}$ defines a $G$-invariant, $\w_{\CO}$-compatible, integrable almost complex structure on $(\CO,\w_{\CO})$.} and $\gg_{X_0,\a}$ is defined in Section \ref{Lie-basic} (after Remark \ref{Lie-remark}).
\end{lemma}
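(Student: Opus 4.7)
The strategy is to directly compute the tangent space by translating the computation via the group action to the identity. First I would verify that $X_0 \in \RFF{\exp(X)\cdot p_0}$. By definition $\RFF{\exp(X)\cdot p_0}=\exp(X)\cdot\RF$, and since both $X$ and $X_0$ lie in the abelian subalgebra $\t_0$, we have $[X,X_0]=0$, hence $\Ad(\exp(-X))X_0=X_0$. So $X_0=\exp(X)\cdot X_0$ belongs to $\exp(X)\cdot\RF$.

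Next I would compute $T_{X_0}\RFF{\exp(X)\cdot p_0}=\Ad(\exp(X))\,T_{X_0}\RF$. Since $\RF=K\cdot X_0$, we have $T_{X_0}\RF=[\kk,X_0]$. Using \eqref{rootsp} and the fact that $\ad(X_0)$ is an isomorphism from $\ka{X_0}$ to $\pa{X_0}$ (the centralizer $\cent{X_0}{\kk}$ contributes nothing since it is killed by $\ad(X_0)$), this yields $T_{X_0}\RF=\bigoplus_{\a\in\Ra{X_0}}\pa{X_0}$.

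The core of the argument is then to evaluate $\Ad(\exp(X))\zeta=e^{\ad(X)}\zeta$ for $\zeta\in\pa{X_0}$. Since $X\in\t_0$, the defining relation $\ad(X)^2=-4\pi^2\a(X)^2\id$ on the root space $\gg_{\t_0,[\a]}$ makes the exponential series collapse to
\[
e^{\ad(X)}\zeta=\cos(2\pi\a(X))\,\zeta+\frac{\sin(2\pi\a(X))}{2\pi\a(X)}\,\ad(X)\zeta.
\]
To recognize the right hand side as the formula in the statement I need the key identity $\frac{1}{\a(X)}\ad(X)|_{\gg_{\t_0,[\a]}}=\frac{1}{\a(X_0)}\ad(X_0)|_{\gg_{\t_0,[\a]}}$. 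This reduces to showing that $\ad(Y)|_{\gg_{\t_0,[\a]}}$ vanishes whenever $Y\in\t_0$ satisfies $\a(Y)=0$: passing to the complexification $\gg^{\CC}_{\a}\oplus\gg^{\CC}_{-\a}$ of $\gg_{\t_0,[\a]}$, on which $\ad(Y)$ acts as multiplication by $\pm 2\pi i\a(Y)$, makes this transparent. Combined with the observation that $Y\mapsto\ad(Y)|_{\gg_{\t_0,[\a]}}$ is linear and hence factors through $\a:\t_0\to\RR$, the identity follows. Substituting and using $\frac{1}{2\pi\a(X_0)}\ad(X_0)\zeta=-J_{\CO,X_0}\zeta$ produces the desired expression.

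The only substantive point is the proportionality $\ad(X)\zeta=\tfrac{\a(X)}{\a(X_0)}\ad(X_0)\zeta$, which is what turns the na\"ive formula, depending linearly on the (non-canonical) operator $\ad(X)$, into one expressed through the intrinsic almost complex structure $J_{\CO,X_0}$. Once this is settled, the remaining computation is a routine expansion of $e^{\ad(X)}$.
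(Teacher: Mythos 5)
Your proof is correct and takes essentially the same route as the paper: since $\RFF{\exp(X)\cdot p_0}=\exp(X)\cdot\RF$ and $\exp(X)$ fixes $X_0$, one applies $e^{\ad(X)}$ to $T_{X_0}\RF=\bigoplus_{\a\in\Ra{X_0}}\pa{X_0}$ and expands using the restricted root decomposition \eqref{rootsp}. The proportionality $\ad(X)|_{\gg_{\t_0,[\a]}}=\frac{\a(X)}{\a(X_0)}\ad(X_0)|_{\gg_{\t_0,[\a]}}$ you isolate (equivalently, $\ad(Y)$ vanishes on $\gg_{\t_0,[\a]}$ when $\a(Y)=0$, which also follows directly from skew-symmetry and $\ad(Y)^2=0$ there) is precisely the detail the paper leaves implicit in ``apply decomposition \eqref{rootsp}''.
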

\begin{proof}
Since $\RFF{\exp(X)\cdot p_0}=\exp(X)\cdot\RF$, we apply decomposition \eqref{rootsp} to obtain the result.
\end{proof}

Put $\ul{x}_0:=([e:X_0],X_0,[e:X_0])$. Let $(u,\g)$ be a \capping disk for $\ul{x}_0$ (see Definition \ref{mcd}). Define a Lagrangian subbundle $\RFF{(u,\g)}$ of $u^*T\CO|_{\pt\DH}$ as follows. Along $[-1,1]$, we put $\RFF{(u,\g)}|_{[-1,1]}\equiv T_{X_0}\RF$. Let $\ol{\g}$ be the projection of $\g$ into $G/K$. For any $z\in \pt_a\DH$, we put $(\RFF{(u,\g)})_z:=T_{u(z)}\RFF{\ol{\g}(z)}$. Define
\begin{align*}
\area(u,\g)& := -\int \g^*\l_M + \int u^*\w_{\CO},\\
\mu(u,\g)& := \mu(u^*T\CO,\RFF{(u,\g)}) .
\end{align*}

\begin{lemma} \label{monocapdisk} Under Assumption \ref{monotone}, every \capping disk for $\ul{x}_0$ satisfies
\begin{equation}\nonumber
\area(u,\g)=\monoconst \cdot \mu(u,\g)
\end{equation}
\end{lemma}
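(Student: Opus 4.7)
The plan is to verify $\area(u,\g)=\monoconst\cdot\mu(u,\g)$ on representatives of the homotopy classes of \capping disks for $\ul{x}_0=([e:X_0],X_0,[e:X_0])$, using that both $\area$ and $\mu$ are homotopy invariants that add under connect sum. Since $u|_{[-1,1]}\equiv X_0$ by condition (c) of Definition \ref{mcd}, each \capping disk descends after collapsing $[-1,1]$ to a pair $(\tilde u:D^2\ra\CO,\,\g:S^1\ra\C)$ with $\sq\circ\g=\tilde u|_{\pt D^2}$, based at $[e:X_0]$. Via the long exact sequence of the fibration $\sq:\C\ra\CO$ -- whose fiber over $X_0$, by Proposition \ref{comparesymplform}(1), deformation retracts onto the torus $T_0=\exp(\t_0)$ (so in particular has $\pi_2=0$) -- the homotopy classes of such pairs are generated by two families: \textbf{(A)} spheres $\tilde u\in\pi_2(\CO)$ with $\g$ constant, and \textbf{(B)} loops $\g\in\pi_1(\sq^{-1}(X_0))\simeq Q^{\vee}$ with $\tilde u\equiv X_0$. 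The constant \capping disk satisfies $\area=\mu=0$, giving the base case.

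For Family \textbf{(A)}, the compatibility $\sq\circ\g=\tilde u|_{\pt D^2}$ with $\g$ constant forces $\tilde u|_{\pt D^2}\equiv X_0$, so $\tilde u$ descends to $S^2$ and $\RFF{(u,\g)}\equiv T_{X_0}\RF$ is constant along $\pt\DH$. Hence $\mu(u,\g)=2\langle c_1(T\CO),[\tilde u]\rangle$ and $\area(u,\g)=\int\tilde u^*\w_{\CO}$, and the equality reduces to $[\w_{\CO}]=2\monoconst\cdot c_1(T\CO)$ in $H^2(\CO;\RR)$. Under the standard identification $H^2(\CO;\RR)\hookrightarrow \t_0^{\vee}$, one has $[\w_{\CO}]\leftrightarrow \langle -,X_0\rangle$ and $c_1(T\CO)\leftrightarrow \sum_{\a\in\Ra{X_0}}\ma\,\a$, and Assumption \ref{monotone} is precisely the proportionality of these two linear forms on $\t_0$ with ratio $2\monoconst$.

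For Family \textbf{(B)}, take $\g(z)=[\exp(X(z)):X_0]$ with $X:\pt_a\DH\ra\t_0$ satisfying $X(-1)=0$ and $X(1)=q^{\vee}\in Q^{\vee}$ (so $\g(\pm 1)=[e:X_0]$, using that $\t_0$ fixes $X_0$ under the adjoint action). A direct computation using $\pi^*\l_M=\langle\b,Y\rangle$ and the commutativity of $\t_0$ yields $\g^*\l_M=\langle dX,X_0\rangle$, hence $\area(u,\g)=-\int\g^*\l_M=-\langle q^{\vee},X_0\rangle$. By Lemma \ref{tangentspaceofLag}, along $\pt_a\DH$ the subbundle $\RFF{(u,\g)}$ splits as $\bigoplus_{\a\in\Ra{X_0}}F_{\a}(z)$, where $F_{\a}(z)$ is an $\ma$-dimensional real Lagrangian in the complex space $\pa{X_0}\oplus J_{\CO,X_0}\pa{X_0}$ obtained from $\pa{X_0}$ by rotation through angle $-2\pi\a(X(z))$; this contributes $\ma\cdot\frac{-2\pi\a(q^{\vee})}{\pi}=-2\ma\a(q^{\vee})$ to the Maslov index. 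Summing gives $\mu(u,\g)=-2\sum_{\a}\ma\a(q^{\vee})$, and Assumption \ref{monotone} then yields $\area(u,\g)=-\langle q^{\vee},X_0\rangle=-2\monoconst\sum_{\a}\ma\a(q^{\vee})=\monoconst\cdot\mu(u,\g)$.

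The main obstacle is the classification step: rigorously verifying that every homotopy class of \capping disk decomposes as a connect sum of Family (A) and (B) generators, and that $\area$ and $\mu$ are genuinely additive under such connect sums. The former is a homotopy-theoretic statement controlled by the long exact sequence of the fibration $\sq$ together with the vanishing of $\pi_2$ of its torus fiber; the additivity of $\area$ follows from Stokes' theorem, while that of $\mu$ is the standard concatenation property of Maslov indices for Lagrangian paths, applied carefully at the seam where the boundary conditions of the two pieces are glued.
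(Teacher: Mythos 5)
Your proposal is correct in substance and its Family (B) computation is exactly the computation the paper performs, but the overall route differs in one structural respect, and the difference matters for the step you yourself flag as the main obstacle. The paper does not decompose capping disks into sphere classes plus fiber loops. Instead it uses the homotopy lifting property of the fibration $\sq:\C\ra\CO$ to lift the \emph{entire} disk $u$ to a map $\tilde{u}:\DH\ra\C$ extending $\g$; since $\DH$ deformation retracts onto $[-1,1]$ and $u|_{[-1,1]}\equiv X_0$, this homotopes $(u,\g)$ through \capping disks to $(v,\tilde{u}|_{[-1,1]})$ with $v\equiv X_0$ and $\tilde{u}|_{[-1,1]}$ a loop in the torus fiber, classified by some $q\in Q^{\vee}$. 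In other words, every homotopy class already lies in your Family (B), so Family (A), the identification $[\w_{\CO}]=2\monoconst\, c_1(T\CO)$ under $H^2(\CO;\RR)\hookrightarrow\t_0^{\vee}$ (which you assert rather than prove), and the connect-sum/additivity bookkeeping are all unnecessary. What your route buys is a cohomological reformulation of Assumption \ref{monotone} that is conceptually illuminating; what the paper's route buys is that the classification problem you identify as the main obstacle evaporates: the whole-disk lift is precisely the mechanism that reduces the long-exact-sequence argument to a single reduction, and it is also how the paper proves Lemma \ref{allmcd}. If you keep your decomposition, you should note that the Family (A) generators are not independent of Family (B): a sphere with constant boundary lift is itself homotopic, through \capping disks, to the Family (B) generator indexed by the image of its class under the connecting map $\pi_2(\CO)\ra\pi_1$ of the fiber, so your two verifications must be (and are) consistent with each other via Assumption \ref{monotone}.
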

\begin{proof}
It is not hard to see that both $\area(u,\g)$ and $\mu(u,\g)$ are invariant under homotopy of \capping disks for $\ul{x}_0$. Since $\sq:\C\ra \CO$ is a fibration, $u$ has a lift $\tilde{u}$ in $\C$ extending $\g$. This lift allows us to homotope $(u,\g)$ to $(v,\tilde{u}|_{[-1,1]})$ where $v\equiv X_0$. (Strictly speaking, one has to reverse the usual orientation on $[-1,1]$ and reparametrize $\tilde{u}|_{[-1,1]}$ suitably.) Project $\tilde{u}|_{[-1,1]}$ into $T_0$ and lift it to the universal cover $\t_0$. Denote by $q\in Q^{\vee}$ the endpoint of this lifting, assuming it starts at $0$. It is clear that $\area(v,\tilde{u}|_{[-1,1]})=-\langle q,X_0\rangle $. To compute $\mu(v,\tilde{u}|_{[-1,1]})$, we consider the natural trivialization of the symplectic vector bundle $v^*T\CO\simeq T_{X_0}\CO$. By Lemma \ref{tangentspaceofLag}, $\mu(v,\tilde{u}|_{[-1,1]})$ is equal to the degree of the loop
\[t\mapsto \bigoplus_{\a\in\Ra{X_0}} \left\{\left. \cos(2\pi t\a(q))\eta - \sin(2\pi t\a(q)) J_{\CO,X_0}(\eta) \right|~\eta\in\pa{X_0} \right\},\quad t\in [0,1]  \]
in $\Lag(T_{X_0}\CO,(\w_{\CO})_{X_0})$ which is equal to $-2\sum_{\a\in\Ra{X_0}} \ma \a(q)$. The result follows.
\end{proof}

\begin{myproof}{Proposition}{\ref{admiss}} Since every cotangent fiber is contractible, we may assume the labelling does not contain $L_p$. Apply Lemma \ref{easy} so that in the computation of $\mu(\ul{u})$, we may assume every seam condition to be of the form $L_p^{\perp}\oplus\RFF{p}$ where $L_p^{\perp}$ denotes any fiber of the horizontal subbundle of $TM$ with respect to the Levi-Civita connection. As the horizontal subbundle is globally well-defined, the patches labelled by $M$ do not contribute to $\mu(\ul{u})$. On the other hand, the symplectic areas of these patches are equal to the sum of certain integrals of the standard Liouville form $\l_M$. Hence it suffices to verify the monotonicity in the following settings. Let $u:\S\ra \CO$ be a map defined on a compact Riemann surface $\S$ with boundary $\pt\S=\pt_1\S\cup\cdots\cup\pt_k\S$ where each $\pt_i\S$ is a simple closed curve. Suppose $\g_i$ is a lift of $u|_{\pt_i\S}$ in $\C$. These curves determine Lagrangian subbundles of $u^*T\CO|_{\pt_i\S}$. Define $\mu(u,\{\g_i\})$ to be the Maslov index of the corresponding Riemann-Hilbert problem. Define also
\[\area(u,\{\g_i\}):=\int_{\S}u^*\w_{\CO}-\sum_{i=1}^k\int\g_i^*\l_M.   \]
We have to show
\begin{equation}\label{monoprop}
\area(u,\{\g_i\})=\monoconst \cdot \mu(u,\{\g_i\}).
\end{equation}
Since $\CO$ is simply connected, it suffices to verify \eqref{monoprop} when $\S$ is a disk or a sphere. The disk case is just Lemma \ref{monocapdisk}; and the sphere case is in fact a special case of the disk case, by assuming the boundary of the disk to be mapped to the same point.
\end{myproof}

Finally, we prove a formula for quilts which expresses their Fredholm indices in terms of their input and output data. Recall the notations introduced at the beginning of Section \ref{pfA}.

\begin{lemma} \label{Ind} The Fredholm index $\ind(\ul{u})$ of a quilt $\ul{u}$ is given by
\begin{equation}\label{Index}
\ind(\ul{u}) =\sum_{\a\in R_{X_{in}}^+ }\ma \lfloor 2\a(q_{in} +a) \rfloor -   \sum_{\a\in R_{X_{out}}^+ }\ma \lfloor 2 \a(q_{out} +a) \rfloor
\end{equation}
where $\lfloor x \rfloor$ denotes the greatest integer not exceeding $x$.
\end{lemma}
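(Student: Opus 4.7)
The plan is to interpret the right-hand side of \eqref{Index} as a difference of two absolute gradings carried by the input and output generators, computed from a Conley--Zehnder-type analysis at each asymptotic end of $\ul{u}$. Setting
\[
\nu(q, X) := \sum_{\a \in R^+_X} \ma \lfloor 2\a(q + a) \rfloor
\]
for $X \in W \cdot X_0$ and $q \in Q^{\vee}$, the lemma reduces to the additivity $\ind(\ul{u}) = \nu(q_{in}, X_{in}) - \nu(q_{out}, X_{out})$, i.e.\ to the claim that $\nu(q_{in}, X_{in})$ is the absolute grading of the input generator in $CW^*_b(L_0, L_1)$ and $\nu(q_{out}, X_{out})$ the absolute grading of the output generator in $Hom^*_{\AA}((L_0, C), (L_1, C))$, for the mutually compatible cappings determined by $\ul{u}$.

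For the input grading I trivialize $TM$ along $x_q \subset T^* T_0$ using the Levi--Civita splitting $TM = \Hor \oplus \Ver$ together with the root decomposition \eqref{rootsp}. Since $T_0$ is flat and totally geodesic (Lemma \ref{flattorus}), the $\t_0$-directions contribute trivially. In each root piece $\pa{X_0}$, the Jacobi equation along the geodesic $\exp(t(q+a)) \cdot p_0$ takes the form $\ddot V + \ad(q+a)^2 V = 0$ by Lemma \ref{symwk1}(2), producing a rotation of angular frequency $2\pi \a(q+a)$ on the associated horizontal-vertical plane. With Lagrangian boundary conditions given by two cotangent fibers, a standard Maslov count yields $\ma \lfloor 2\a(q+a) \rfloor$ per root. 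Summing over $R^+_{X_{in}}$ and recording the capping-disk data from Lemma \ref{allmcd} gives $\nu(q_{in}, X_{in})$.

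The output grading is computed analogously at the triple strip-like end via the seam conditions provided by $C$. Proposition \ref{comparesymplform} is the key input: the leaves of $\UU_0$ are the fibers of the symplectic quotient $\sq$ and are $\w_M$-isotropic, whereas in each root piece $\UU_{\a}$ the pullback $(\sq)^* \w_{\CO}$ differs from $\w_M$ only by the positive factor $\a(\phi_0(Y))/\a(Y)$. Consequently the linearized problem along $\ul{x}_w$ decomposes into a trivial $\UU_0$ piece and a family of one-dimensional rotation problems identical to those analyzed at the input end, but centered at $wX_0$ instead of $X_0$. Summing over $R^+_{X_{out}}$ yields $\nu(q_{out}, X_{out})$.

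The final step is to establish the additivity $\ind(\ul{u}) = \nu(q_{in}, X_{in}) - \nu(q_{out}, X_{out})$ by standard linear gluing of the asymptotic operators. Lemma \ref{easy} allows one to homotope the seam correspondence $C$ to a split correspondence $L_p^{\perp} \oplus \RFF{p}$ without changing the index, after which the $M$-patches carry no net Maslov contribution (their boundary data is globally parallel with respect to the Levi--Civita connection) and the formula reduces cleanly. The principal obstacle in this plan is the bookkeeping at the triple strip-like end: the positive-root system $R^+_X$ depends on the Weyl chamber containing $X$, and one must verify that the capping-disk lifts of Lemma \ref{allmcd} record the floor-function jumps of $\nu(q, X)$ consistently as one crosses chamber walls, so that $\nu(q_{out}, X_{out})$ indeed captures the output grading irrespective of which chamber $q_{out} + a$ lies in.
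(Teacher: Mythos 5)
Your skeleton coincides with the paper's: homotope the seam condition to a split one via Lemma \ref{easy}, argue that the $M$-side boundary data ($\Hor$, $\Ver$, and the split seam $\L_M^{\perp}|_C\oplus\RFFF{\CO}$) is parallel so that the $M$-patches contribute no interior Maslov terms, and reduce everything to rotation numbers in the root planes attached to the two ends. The problem is that the step you yourself call ``the principal obstacle'' --- verifying that $\nu(q_{out},X_{out})$, with the chamber-dependent system $R^+_{X_{out}}$ and floors evaluated at $q_{out}+a$, really is the output contribution --- is the heart of the lemma, and you do not carry it out. This is a genuine gap, not bookkeeping: for ugly quilts $q_{out}+a$ does \emph{not} lie in the chamber $w_{out}\wc_{X_0}$, so some $\a\in R^+_{X_{out}}$ have $\a(q_{out}+a)<0$ and the corresponding floors are negative; these signs are exactly what Proposition \ref{D} later exploits. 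The paper closes this point with Lemma \ref{tangentspaceofLag}, which gives the uniform formula $T_{X_0}\RFF{\exp(X)\cdot p_0}=\bigoplus_{\a\in\Ra{X_0}}\left\{\cos(2\pi\a(X))\eta-\sin(2\pi\a(X))J_{\CO,X_0}(\eta)\,\middle|\,\eta\in\pa{X_0}\right\}$ valid for \emph{all} $X\in\t_0$ with no chamber restriction; the output term is then simply the Maslov index of the explicit path $t\mapsto T_{X_{out}}\RFF{\exp(t(q_{out}+a))\cdot p_0}$, which equals $-\sum_{\a\in R^+_{X_{out}}}\ma\lfloor 2\a(q_{out}+a)\rfloor$, so there is no wall-crossing consistency left to check. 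Saying the output end is ``analogous to the input end'' does not supply this: at the output the rotation is produced by the winding of the seam lift $\g$ (the capping data of Lemma \ref{allmcd}), not by conjugate points of a geodesic, and it is this winding that the floors record.

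Two further points. The paper first gauges by the Hamiltonian flow of $H$ (which preserves $\C$) so that the output time-$\frac{1}{2}$ chords in $M$ become points; without this, the split seam condition does not interact cleanly with the generalized-chord asymptotics at the triple strip-like end, and your appeal to ``standard linear gluing'' of asymptotic operators is not yet justified there. Also, the leaves of $\UU_0$ are the fibers of $\pj$ and are \emph{symplectic} (Proposition \ref{comparesymplform}(1), Lemma \ref{rootspglobal}); the isotropic fibers are those of $\sq$ restricted to $\C$. Finally, your Jacobi equation should read $\ddot V-\ad(q+a)^2V=0$, i.e. $\ddot V+4\pi^2\a(q+a)^2V=0$ on $\pa{X_0}$; the resulting frequency $2\pi\a(q+a)$ and the input count $\ma\lfloor 2\a(q_{in}+a)\rfloor$ are nevertheless correct and agree with the paper's computation of $\ind_M$, which obtains the same number by a direct Maslov computation with the Lagrangian distributions $\L_M$ and $\L_M^{\perp}$ along $x_{in}$ rather than through the Morse index of the geodesic.
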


\begin{proof}
Since the \Ham flow $\varphi_H^t$ of $H$ preserves $\C$, we may assume, after taking a gauge transformation $\varphi_H^g$ for some $g:\S_M\ra\RR$, $u_M$ converges to a point at each of the negative strip-like ends (i.e. the time-$\frac{1}{2}$ \Ham chords become time-0 ones). Denote by $\L_M$ and $\RFFF{\CO}$ the vertical tangent bundles of the cotangent bundle $M=T^*(G/K)\ra G/K$ and the fiber bundle $\C\simeq G\times_K \RF \ra G/K$ respectively. Let $\L_M^{\perp}$ be the horizontal subbundle of $TM$ with respect to the Levi-Civita connection. Over $C$, there are two Lagrangian subbundles of the symplectic vector bundle $T(M^-\times \CO)|_C$: the tangent bundle $TC$ and the direct sum $\L_M^{\perp}|_C\oplus \RFFF{\CO}$. (Here we identify $C$ with $\C \subset M$.) Observe that $\L_M\cap \L_M^{\perp}=\mb{0}$ and $\L_M|_C\circ TC=\RFFF{\CO}$. By Lemma \ref{easy}, we can homotope the seam condition $TC$ to $\L_M^{\perp}|_C\oplus \RFFF{\CO}$, such that the non-degeneracy at the negative (triple-)strip-like end is preserved. It follows that the index $\ind(\ul{u})$ is equal to the sum of the indices of the corresponding Fredholm operators over $\S_M$ and $\S_{\CO}$, each with Lagrangian boundary condition. Let us denote them by $\ind_M$ and $\ind_{\CO}$ respectively.

For $\ind_{\CO}$, we proceed as in the proof of Lemma \ref{monocapdisk} and come up with the Lagrangian path
\[t\mapsto \bigoplus_{\a\in\Ra{X_{out}}} \left\{\left. \cos(2\pi t\a(q_{out}+a))\eta - \sin(2\pi t\a(q_{out}+a)) J_{\CO,X_{out}}(\eta) \right|~\eta\in\pa{X_{out}} \right\},\quad t\in [0,1]. \]
By a straightforward computation, we get
\begin{equation}\nonumber
\ind_{\CO} = -\sum_{\a\in\Ra{X_{out}}} \ma \lfloor 2\a(q_{out}+a)\rfloor.
\end{equation}

It remains to compute $\ind_M$. Using the Lagrangian distributions $\L_M$ and $\L_M^{\perp}$ in $M$, we find that $\ind_M$ is equal to minus the cohomological degree of the \Ham chord $x_{in}$. The latter degree can be determined by noticing that along $x_{in}$, the symplectic vector bundle $TM$ splits into $T(T^*T_0)$ and its symplectic orthogonal complement $T(T^*T_0)^{\perp\w}$, and the Lagrangian subbundle $\L_M$ splits into the vertical tangent bundle of $T^*T_0\ra T_0$ and a Lagrangian subbundle of $T(T^*T_0)^{\perp\w}$. It is well-known that the index for the first Lagrangian subbundle is zero. For the second, notice that it plays the same role as $\RFFF{\CO}$, provided that we replace $X_{out}$ by $q_{in}+a$. Since $\phi_0(q_{in}+a)=X_{in}$, we have
\begin{equation}\nonumber
\ind_M = \sum_{\a\in\Ra{X_{in}}} \ma \lfloor 2\a(q_{in}+a)\rfloor.
\end{equation}
\end{proof}

\begin{remark} \label{Indstrip} Similar arguments show that the Fredholm indices of strips and triple-strips have the same form as \eqref{Index}. Details are left to the readers.
\end{remark}
\subsection{A unique continuation lemma}
Since we are restricting ourselves to a smaller class of almost complex structures for defining the Floer-theoretic invariants, the following lemma is necessary in order to classify all holomorphic curves to which the standard transversality arguments cannot be applied, and which we will address by other means.

Let $M^{2m}$ be an even-dimensional manifold (which will be $T^*(G/K)$), and $N\subset M$ an even-dimensional closed submanifold. Let $\mathcal{C}=\{(U_i,\varphi_i)\},~ \varphi_i:U_i\xrightarrow{\sim}  \RR^{2m}$ be a collection of local charts of $M$ such that $\{U_i\}$ covers $N$ and $\varphi_i(U_i\cap N)=V_i$, a subspace of $\RR^{2m}$. Denote by $ \I(M)$ the space of almost complex structures on $M$, and define $\I_{N,\mathcal{C}}(M)$ be the space of $I\in\I(M)$ such that for any $i$, $I|_{U_i}\in\I(\RR^{2m})$ preserves the tangent spaces of every translate of $V_i$. In particular, $N$ is an almost complex submanifold of $M$ with respect to every $I\in\I_{N,\mathcal{C}}(M)$.

We are given the following:
\begin{itemize}
\item a smooth vector field $X$ on $M$ which is tangent to $N$;
\item a connected Riemann surface $\S$, possibly with boundary;
\item a 1-form $\a\in\O^1(\S;\RR)$; and
\item a smooth map $I_{\S}:\S\ra\I_{N,\mathcal{C}}(M)$.
\end{itemize}

\begin{lemma}\label{UCLemma}  Suppose $u:\S\ra M$ is a smooth map satisfying
\begin{equation}\label{UCCR}
(du - X\otimes \a)^{0,1}_{I_{\S}}=0.
\end{equation}
\begin{enumerate}
\item If $u$ maps a non-empty open subset of $\S$ into $N$, then $\im(u)\subset N$.
\item If $u(\pt\S)\subset N$, then $\im(u)\subset N$.
\end{enumerate}
\end{lemma}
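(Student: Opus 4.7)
The plan is to reduce both parts to showing openness of the set $S := u^{-1}(N) \subset \S$. Since $N$ is closed, $S$ is automatically closed; given that $\S$ is connected and $S$ is non-empty, openness then forces $S = \S$. Part (1) provides an interior point of $S$ with an open neighborhood, whereas part (2) gives $\pt\S \subset S$ and requires the extra step that a boundary point of $S$ lies in the interior of $S$ inside $\S$.

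I would argue locally. Fix $z_0 \in \ol{S}$ and pick a chart $(U_i, \varphi_i)$ with $u(z_0) \in U_i$ and a linear complement $W$ of $V_i$, so that $\RR^{2m} = V_i \oplus W$. Writing $u = (u_1, u_2)$ in this splitting, the claim becomes that $u_2 \equiv 0$ on a neighborhood of $z_0$ in $\S$. The hypothesis $I_\S(z) \in \I_{N, {\cal C}}(M)$ forces $I_\S(z)$ to preserve the (constant) foliation by translates of $V_i$, so in the chart its matrix is block upper-triangular,
\[
I_\S(z) = \begin{pmatrix} A(z) & B(z) \\ 0 & D(z) \end{pmatrix}, \qquad D(z)^2 = -\id_W.
\]
Projecting \eqref{UCCR} onto the $W$-factor yields
\[
du_2 + D\, du_2 \circ j \;=\; X_2\, \alpha + D X_2\, \alpha \circ j,
\]
where $X_2$ is the $W$-component of $X$. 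Because $X$ is tangent to $N$, $X_2|_{V_i} \equiv 0$, so locally $|X_2(u(z))| \leq C|u_2(z)|$. The upshot is the first-order elliptic inequality $|du_2 + D\, du_2 \circ j| \leq C|u_2|$; squaring with the formally conjugate Cauchy-Riemann operator produces the second-order inequality
\[
|\Delta u_2| \;\leq\; C'(|u_2| + |du_2|)
\]
in isothermal coordinates on $\S$.

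Part (1) then follows immediately from Aronszajn's classical unique continuation theorem: any solution of such an inequality which vanishes on a non-empty open subset of a connected open region must vanish identically. This gives openness of $S$ in the interior of $\S$, hence (1).

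The hard part is (2), where the vanishing set $\pt\S$ is lower-dimensional. My plan is to argue that $u_2|_{\pt\S} = 0$ together with the first-order equation forces $u_2$ to vanish to infinite order at each boundary point: tangential derivatives along $\pt\S$ vanish from the Dirichlet condition, and the equation then expresses normal derivatives in terms of tangential ones and $u_2$ itself, giving inductively that all derivatives vanish along $\pt\S$. A boundary Aronszajn--Carleman argument (or an extension by Schwarz-type reflection combined with the interior theorem) then yields $u_2 \equiv 0$ on a one-sided neighborhood of $\pt\S$, at which point (1) completes the proof. The main obstacle will be this boundary step: a naive reflection $\tilde u_2(s,t) := -u_2(s,-t)$ paired with $\tilde D(s,t) := -D(s,-t)$ does produce an extension satisfying an inequality of the same type, but $\tilde D$ has a jump discontinuity across the straightened boundary, so the usual interior Aronszajn theorem does not apply directly. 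One must therefore either invoke a unique continuation theorem valid for first-order elliptic systems with merely piecewise-Lipschitz coefficients, or bypass reflection via a Carleman estimate adapted to the boundary. The block upper-triangular form of $I_\S$ guaranteed by the hypothesis on ${\cal C}$ is precisely what makes $u_2$ decouple as the unknown in a genuine CR-type system and keeps either route available.
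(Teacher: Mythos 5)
Your part (1) is sound and is essentially the paper's argument: the block-triangular form of $I_{\S}$ in a chart of ${\cal C}$, projection of the equation to a complement of $V_i$, and the bound $|X_2(u)|\le C|u_2|$ coming from tangency of $X$ to $N$. The paper feeds the resulting first-order system $\pt_s v+\tilde I_{\S}(z)\pt_t v+B(z)v=0$ into the Carleman similarity principle, while you pass to a second-order differential inequality and quote Aronszajn; either is fine, but note one small repair: you cannot literally ``square'' an inequality. You must apply the conjugate operator to the actual projected equation, after writing the zeroth-order term in Hadamard form $X_2(y)=B'(y)\,\pi^{\perp}(y)$, so that the inhomogeneity is $E(z)u_2$ and its derivative is controlled by $|u_2|+|du_2|$; this is exactly the factorization $A(z,y)=A(z,\pi(y))+B(z,y)\pi^{\perp}(y)$ used in the paper.

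The genuine gap is part (2). You reduce it to a boundary unique continuation statement and then leave precisely that step open: you note that odd reflection makes the coefficient $D$ jump across the straightened boundary and say one must either invoke a unique continuation theorem for merely piecewise-Lipschitz coefficients or produce a boundary Carleman estimate, without carrying out either; the infinite-order vanishing of $u_2$ along $\pt\S$ that you establish does not by itself conclude. The paper's resolution is much simpler and exploits that $u_2$ vanishes \emph{identically} on the boundary arc, not merely satisfies a boundary condition: extend $u_2$ (the paper's $v$) by \emph{zero} across the straightened boundary instead of reflecting, and extend $\tilde I_{\S}$ and the zeroth-order coefficient smoothly but arbitrarily over the full disk. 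Since the equation is linear in the unknown and the extension vanishes on the added half-disk, the extended map lies in $W^{1,p}$ for every $p>2$ and satisfies the same first-order equation almost everywhere; no matching of coefficients across the boundary is needed at all. The interior similarity principle (or your Aronszajn argument, after the same factorization) then forces the extension to vanish near each boundary point, because it already vanishes on the open lower half-disk, and part (1) finishes the proof. Incidentally, your computation that all derivatives of $u_2$ vanish along $\pt\S$ shows the zero extension is even smooth, but the $W^{1,p}$ regularity already suffices, so that computation can be dropped.
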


\begin{proof}
\begin{enumerate}
\item[]
\item By the standard continuity argument, it suffices to prove the local case, i.e. $\S=D$, the unit disk in $\CC$ (the boundary $\pt \S$ is irrelevant for this part). We may assume $u$ enters $U_i\simeq \RR^{2m}$ for some $i$, $u(0)=0$ and $0$ is a limit point of $u^{-1}(V_i)$. The equation \eqref{UCCR} is transformed into
\begin{equation}\nonumber
\pt_s u(z) + I_{\S}(z,u(z)) \pt_t u(z) + A(z,u(z))=0
\end{equation}
where
\[
\begin{array}{cccl}
A : &D\times \RR^{2m} &\ra& \RR^{2m} \\ \\ [-1em]
 & (z,y) &\mapsto & -\iota_{\pt_s}\a(z)\cdot X(y)-\iota_{\pt_t}\a(z)\cdot I_{\S}(z,y) X(y).
\end{array}
 \]

Let $V_i^{\perp}$ be the orthogonal complement of $V_i$ in $\RR^{2m}$. Let $\pi:\RR^{2m}\ra V_i$ and $\pi^{\perp}:\RR^{2m}\ra V_i^{\perp}$ be the projections. We can write
\[ A(z,y) = A(z,\pi(y)) + B(z,y)\pi^{\perp}(y)\]
where
\[B(z,y):=\int_0^1\pt_yA(z,\pi(y)+t\pi^{\perp}(y)) ~dt \in Hom(V_i^{\perp},\RR^{2m}).\]
By the definition of $\I_{N,\mathcal{C}}(M)$, we see that $\tilde{I}_{\S}(z):= \pi^{\perp}\circ I_{\S}(z,u(z))\circ\iota ,~z\in D$ satisfies $\tilde{I}_{\S}(z)^2=-1$ where $\iota:V_i^{\perp}\ra\RR^{2m}$ is the inclusion. Hence $\tilde{I}_{\S}(z)\in \I(V_i^{\perp})$.

Put $v:=\pi^{\perp}\circ u$. Using the tangency condition on $X$ which implies $\pi^{\perp}A(z,\pi(y))=0$, we have
\begin{enumerate}
\item $v(0)=0$ and $0$ is a limit point of $v^{-1}(0)$; and
\item $v$ solves
\begin{equation}\label{UCCR2}
\pt_sv(z)+\tilde{I}_{\S}(z)\pt_tv(z)+\pi^{\perp}\circ B(z,u(z))v(z) =0.
\end{equation}
\end{enumerate}
We are in a position to apply the Carleman similarity principle \cite{MS} to show that $v\equiv 0$, and hence $u(D)\subset V_i$, as desired.

\item As in the beginning of the proof of (1), we have a smooth map $v:D\cap \{\im(z)\geqslant 0\}\ra V_i^{\perp}$ satisfying $v|_{[-1,1]}\equiv 0$ and equation  \eqref{UCCR2}. Choose smooth extensions of $z\mapsto \tilde{I}_{\S}(z)$ and $z\mapsto \pi^{\perp}\circ B(z,u(z))$ over $D$ and extend $v$ by zero to a function $\tilde{v}$ on $D$. Then $\tilde{v}\in W^{1,p}(D,V_i^{\perp})$ for any $p>2$ and solves \eqref{UCCR2}. The Carleman similarity principle can also be applied so that $u$ maps the upper half disk into $V_i$. Apply (1) to complete the proof.
\end{enumerate}
\end{proof}

For our application, we put $M:=T^*(G/K)$ and $N:=T^*T_0$. We want to find a collection $\mathcal{C}=\{(U_i,\varphi_i)\}$ of local charts as above such that $\J_A(M)\subset \I_{N,\mathcal{C}}(M)$. Pick a point in $T^*T_0$. If it lies in $\UU$, then we choose $(U_i,\varphi_i)$ to be any chart satisfying $U_i\subset \UU$ and that fibers of $\pj$ are identified, via $\varphi_i$, with translates of $V_i$. If that point lies outside $\UU$, it suffices to find a local foliation $\mathcal{F}$ which contains $T^*T_0$ as a leaf and whose leaves are all preserved by $J_0$. (Here we are using the condition that $A\subset \UU$ and is closed in $M$.) This relies on the geometry of $T_0$ and $G/K$.

\begin{lemma}\label{UC} Let $\TTT$ be a totally geodesic submanifold of a Riemannian manifold $(S,g)$ and $V$ a normal vector field defined on $\TTT$. Suppose $V$ is flat as a section of the normal bundle $\mathcal{N}_{S/\TTT}$. Then the submanifold
\[\TTT':=\{(x,\zt+V(x)) |~x\in \TTT, \zt\in T_x\TTT\}\subset T^*S\]
is preserved by $J_g$, where $J_g$ is the canonical almost complex structure associated to the Levi-Civita connection with respect to $g$.
\end{lemma}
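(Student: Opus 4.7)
The plan is to identify the tangent space $T_{(x,w)}\TTT'$ explicitly inside the Levi--Civita splitting $T_{(x,w)}(TS)=\Hor_{(x,w)}\oplus\Ver_{(x,w)}$, both factors of which are identified with $T_xS$ via the connection and the metric. Once this subspace is described in a form symmetric under swapping the two factors, the invariance under $J_g$ is immediate from the matrix presentation in Definition \ref{defineJ}.

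First, I would fix a point $(x,w)=(x,\zt+V(x))\in\TTT'$ with $\zt\in T_x\TTT$ and represent a tangent vector by the velocity at $t=0$ of a curve $t\mapsto(x(t),\zt(t)+V(x(t)))$ with $x(0)=x$ and $\zt(0)=\zt$, where $x(t)\in\TTT$ and $\zt(t)\in T_{x(t)}\TTT$. Writing $\eta:=x'(0)\in T_x\TTT$, the horizontal component of the velocity is the horizontal lift of $\eta$. The vertical component is the covariant derivative
\[
\tfrac{D}{dt}\bigl(\zt(t)+V(x(t))\bigr)\bigr|_{t=0}=\tfrac{D\zt}{dt}\Big|_{0}+\nabla_{\eta}V.
\]
Because $\TTT$ is totally geodesic, the Levi--Civita connection of $S$ restricts to the Levi--Civita connection of $\TTT$, so $\tfrac{D\zt}{dt}|_0\in T_x\TTT$. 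Because $V$ is flat as a section of $\mathcal{N}_{S/\TTT}$, the normal part $(\nabla_\eta V)^{\perp}=\nabla^{\perp}_\eta V$ vanishes, hence $\nabla_\eta V\in T_x\TTT$. Therefore both horizontal and vertical components of every element of $T_{(x,w)}\TTT'$ lie in $T_x\TTT$.

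Conversely, for arbitrary $v_h,v_v\in T_x\TTT$ I would construct a curve realising $(v_h,v_v)$ as follows: take $x(t)\subset\TTT$ with $x'(0)=v_h$, let $P_t$ be parallel transport along $x(t)$ (which preserves $T\TTT$ by total geodesy), and set $\zt(t):=P_t\zt+tP_t(v_v-\nabla_{v_h}V)$; this stays in $T_{x(t)}\TTT$, has $\zt(0)=\zt$, and realises the required covariant derivative at $0$. This shows
\[
T_{(x,w)}\TTT'=\{(v_h,v_v)\in T_x\TTT\oplus T_x\TTT\}\subset \Hor_{(x,w)}\oplus\Ver_{(x,w)}.
\]
Since $J_g$ acts on $\Hor\oplus\Ver$ by the matrix $\bigl[\begin{smallmatrix}0&\id\\-\id&0\end{smallmatrix}\bigr]$, it sends $(v_h,v_v)$ to $(v_v,-v_h)$, which lies in the same subspace. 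Hence $J_g(T\TTT')\subset T\TTT'$.

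The only subtlety I foresee is bookkeeping around the two uses of total geodesy: once to conclude $\tfrac{D\zt}{dt}|_0\in T_x\TTT$ for variations inside $\TTT$, and once to ensure $P_t$ preserves $T\TTT$ so that the explicit curve in the converse direction stays in $\TTT'$. Both follow from the standard fact that $T\TTT$ is parallel along $\TTT$. Note that the flatness assumption on $V$ is essential exactly at the step where $\nabla_\eta V$ must be purely tangential; without it, the vertical component would pick up a normal contribution and the resulting subspace would no longer be $J_g$-invariant.
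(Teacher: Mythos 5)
Your proof is correct and follows essentially the same route as the paper: compute the velocity of a curve $t\mapsto(x(t),\zt(t)+V(x(t)))$ in the Levi--Civita splitting $\Hor\oplus\Ver$, use total geodesy and flatness of $V$ to see both components lie in $T_x\TTT$, and conclude via the matrix form of $J_g$. Your explicit converse construction just makes precise the identification $T_z\TTT'=T_x\TTT\oplus T_x\TTT$ that the paper asserts directly (it also follows from the containment by a dimension count), so there is no substantive difference.
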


\begin{proof}
It suffices to show that for any point $z=(x,\zt+V(x))\in \TTT'$, $T_z\TTT'$ is the direct sum of $T_x\TTT\subset \Ver$ and $T_x\TTT\subset \Hor$, where $T(T^*S)=\Hor\oplus\Ver$ is the natural splitting associated to the Levi-Civita connection. A general path in $\TTT'$ is of the form
\[t\mapsto (x(t),\zt(t)+V(x(t)))\]
where $\zt(t)\in T_{x(t)}\TTT$ is a vector field along $x(t)$. The horizontal component of the derivative of this path is just $\dot{x}(t)$ while the vertical component is $\nabla_{\dot{x}}(\zt+V)= \nabla_{\dot{x}}\zt + \nabla_{\dot{x}}V$. Since $\TTT$ is totally geodesic and $V$ is a flat normal vector field, we have $\nabla_{\dot{x}}\zt, \nabla_{\dot{x}}V\in T\TTT$. The result follows.
\end{proof}

The desired local foliation $\mathcal{F}$ is constructed as follows: WLOG, assume the given point lies in $T_{p_0}^*T_0$ where $p_0:=eK$. First foliate a neighbourhood of $p_0$ in $G/K$ by $g\cdot T_0$ where $g$ runs over a neighbourhood of $e$ in $\exp([\gg,X_0])$. Since every leaf of this foliation is a totally geodesic submanifold with flat normal bundle (Lemma \ref{flattorus}), we can apply Lemma \ref{UC} to obtain our $\mathcal{F}$.

\section{Transversality} \label{trans}
\subsection{Transversality for strips}
Let $u:\RR\times [0,1]\ra M$ be a strip satisfying Conditions \ref{S}. It suffices to show that the only strips to which the standard transversality arguments \cite{Duke} cannot be applied are those which are constant along their length, i.e. independent of the $\RR$-direction. Assume it is the case for $u$. Choose sufficiently large $s_0$ such that $u$ maps $[s_0,+\infty)\times [0,1]$ into a neighbourhood of the Hamiltonian chord $t\mapsto \lim_{s\to +\infty} u(s,t)$ which is contained in $\mr{A}$. It follows that there is an open dense subset $V$ of $[s_0,+\infty)\times [0,1]$ at every point of which $du-X_H\otimes dt$ is tangent to a fiber of $\pj$. These fibers are locally constant on $V$, and hence constant because $\ol{V}=[s_0,+\infty)\times [0,1]$ is connected. By the asymptotic condition, we see that $u$ maps $[s_0,+\infty)\times [0,1]$ into $T^*T_0$. By Lemma \ref{UCLemma}, the image of $u$ lies in $T^*T_0$. To conclude the proof, observe that different \Ham chords $x_q\in\X(L_0,L_1)$ represent different homotopy classes in the space of paths in $T^*T_0$, from $L_0\cap T^*T_0$ to $L_1\cap T^*T_0$. It follows that the limiting \Ham chords of $u$ are equal, and hence $u$ is indeed constant along its length by an action functional argument. The transversality then follows from the non-degeneracy of the \Ham chords of $H$.

\subsection{Transversality for triple-strips} Let $\ul{u}=(u_-,u_{\CO},u_+)$ be a triple-strip satisfying Conditions \ref{TS}. As in the strip case, we assume that the standard transversality argument cannot be applied to $\ul{u}$. Choose sufficiently large $s_0$ such that $u_-$ and $u_+$ map $[s_0,+\infty)\times [0,1/2]$ into a neighbourhood of their limiting \Ham chords which are contained in $\mr{A}$. There are three cases:
\begin{enumerate}
\item There is an open dense subset $V$ of $[s_0,+\infty)\times [0,1/2]$ at every point of which $du_--X_H\otimes dt$ is tangent to a fiber of $\pj$.
\item $u_{\CO}$ is constant.
\item Similar to (1) with $u_-$ replaced by $u_+$.
\end{enumerate}

It suffices to deal with the first two cases. Let us look at Case (1) first. As in the strip case, $u_-$ enters $T^*T_0$. Since $T^*T_0\cap \C$ is the disjoint union of $|W|$ copies of tori and $u_-(\RR\times\{1/2\})\subset\C$, the image of $u_-|_{\RR\times\{1/2\}}$ must lie in one of these tori. By the seam condition, $u_{\CO}|_{\RR\times\{0\}}$ is constant. So by the standard unique continuation theorem (or Lemma \ref{UCLemma} applied to $\CO$ with $N=pt$), $u_{\CO}$ is constant. Thus we are in Case (2).

In this case, $u_{\CO}\equiv wX_0$ for some $w\in W$, and hence $u_-(\RR\times\{1/2\})$ lies in $\sq^{-1}(wX_0)\subset T^*T_0$, by the seam condition. By Lemma \ref{UCLemma}, the image of $u_-$ lies in $T^*T_0$. Observe that the limiting \Ham chords of $u_-$ are equal. We apply an action functional argument to conclude that $u_-$ is constant along its length. The key point is that the integral $\int_{\RR\times\{1/2\}} u_-^*\l_M$ is equal to the pairing of $wX_0$ with the homotopy class represented by the loop defined by projecting $u_-|_{\RR\times\{1/2\}}$ into the zero section $T_0$. Thanks to the boundary condition $u_-(\RR\times\{0\})\subset L_0\cap T^*T_0$, this loop is contractible, and hence the integral is zero.

Similarly, $u_+$ is also constant along its length. Altogether, $\ul{u}$ is constant along its length, and the transversality simply follows from the non-degeneracy of the generalized \Ham chords.

\subsection{Transversality for quilts}
\begin{myproof}{Proposition}{\ref{A}} Let $\ul{u}$ be a quilt. If $\ul{u}$ is not good, then there are two cases:
\begin{enumerate}
\item $u_{\CO}$ is constant.
\item $u_{\CO}$ is non-constant and for any $z\in V\cap(\S_M\setminus\pt_C\S_M)$ with $u_M(z)\in \mr{A}$, $(du_M-X_H\otimes df)(z)$ is tangent to the fiber of $\pj$ containing $u_M(z)$.
\end{enumerate}

In the first case, we have $u_{\CO}\equiv wX_0$ for some $w\in W$. Hence $u_M(\pt_C\S_M)\subset \sq^{-1}(wX_0)\subset T^*T_0$ by the seam condition. By Lemma \ref{UCLemma}, the image of $u_M$ lies in $T^*T_0$. Then $\ul{u}$ is either bad or ugly.

In the second case, draw a smaller disk $V'\subset V$ centered at a point $\in\pt_C\S_M=\pt\S_{\CO}$ such that $u_M(V'\cap \S_M)\subset \mr{A}$. This is possible because $u_M(\pt_C\S_M)\subset\C\subset \mr{A}$. Then $u_M(V'\cap \S_M)$ lies in a fiber of $\pj$. By the seam condition, $u_{\CO}|_{V'\cap\pt\S_{\CO}}$ is constant, and hence $u_{\CO}$ is constant\footnote{See Case (2) in the proof of the transversality for triple-strips.}, a contradiction.
\end{myproof}

\bigskip
\begin{myproof}{Proposition}{\ref{B}} It is clear that the standard transversality argument (proof of \cite[Proposition 3.2.1]{MS}) can be applied to good quilts. This proves the first part of the proposition. A similar argument which uses in addition the proof of \cite[Lemma 3.4.7]{MS} shows that there exists a possibly smaller but still residual subset of the space of regular $\ul{J}$ such that for every good quilt $\ul{u}=(u_M,u_{\CO})$ (with respect to any $\ul{J}$ belonging to this subset), $u_M$ is transverse to $M\setminus\UU$ over the interior of $\S_M$ and to $L_0\setminus\UU$ (resp. $ L_1\setminus\UU$) along the boundary $\pt_{L_0}\S_M$ (resp. $\pt_{L_1}\S_M$). Since these subsets are finite unions of submanifolds of codimension at least two (Lemma \ref{codim2}), the result follows.
\end{myproof}

\bigskip
\begin{myproof}{Proposition}{\ref{B+C/2}} Let $q\in Q^{\vee}$ and $w\in W$. Then the quilt we are looking for must satisfy $u_{\CO}\equiv wX_0$. We find $u_M:\S_M\ra T^*T_0$ satisfying
\begin{itemize}
\item $u_M\to x_q$ at the positive strip-like end;
\item $u_M\to x_{w,\pm}$ at the two negative strip-like ends;
\item $u_M(\pt_{L_0}\S_M)\subset L_0$, $u_M(\pt_{L_1}\S_M)\subset L_1$ and $u_M(\pt_C\S_M)\subset T_0\times\{wX_0\}$; and
\item $(du_M-X_H\otimes df)^{0,1}_{J_0}=0$.
\end{itemize}

\noindent The problem becomes standard if we pass to the universal cover. Before doing this, we define
\[
\begin{array}{cccl}
\tilde{H}: &T^*\t_0 &\ra& \RR\\
&(\eta,\zt) &\mapsto & \frac{1}{2} |\zt|^2
\end{array}.
 \]
Then the \Ham flow $\varphi^t_{\tilde{H}}$ of $\tilde{H}$ is given by
\[ \varphi^t_{\tilde{H}}(\eta,\zt) = (\eta + t\zt, \zt).\]
Define $\afflag_1:=\mb{0}\times \t_0$, $\afflag_2:=\{(q+a-\zt,\zt)|~\zt\in\t_0\}$ and $\afflag_3:=\t_0\times\{wX_0\}$. These are affine linear Lagrangian subspaces of $T^*\t_0$. Identify $\S_M$ with $D\setminus\{1,\pm i\}$ conformally ($D$ being the closed unit disk) such that the positive \sle corresponds to 1 and the others to $\pm i$. Put $J_D(z):= - (  D \varphi^{f(z)}_{\tilde{H}})\circ J_{std} \circ (D\varphi^{-f(z)}_{\tilde{H}}),~z\in D\setminus\{1,\pm i\}$. Observe that $J_D(z)$ is a constant complex structure on $T^*\t_0$ for any $z$. By lifting $u_M$ to the universal cover $T^*\t_0\ra T^*T_0$ and taking the gauge transformation $\varphi^{-f}$, the last problem is equivalent to
\begin{problem}\label{RMT} Find all $v:D\ra T^*\t_0$ satisfying
\[v(e^{i\tt}) \in \left\{
\begin{array}{cl}
\afflag_1,& ~\tt\in[-\frac{\pi}{2},0]\\ \\[-1em]
\afflag_2,& ~\tt\in[0,\frac{\pi}{2}]\\  \\[-1em]
\afflag_3,& ~\tt\in[\frac{\pi}{2},\frac{3\pi}{2}]
\end{array}
\right.\]
and solving (over $\mr{D}$)
\[ (dv)^{0,1}_{J_D} = 0.\]
\end{problem}

\noindent If $J_D$ were domain-independent, then Problem \ref{RMT} would follow immediately from the Riemann mapping theorem. In other words, we need a variant of this theorem. While this should be well-known to experts, we sketch a proof for completeness.

\begin{lemma} For any $q$ and $w$, there exists a unique solution to Problem \ref{RMT}. It is regular and its image lies in the convex hull spanned by the intersection points $\afflag_1\cap \afflag_2$, $\afflag_2\cap \afflag_3$ and $\afflag_1\cap \afflag_3$.
\end{lemma}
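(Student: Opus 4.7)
The plan is to reduce to the one-dimensional case and then solve a variant of the Riemann mapping theorem by the continuity method.

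Choosing an orthonormal basis of $\t_0$ yields a splitting $T^*\t_0 \cong \bigoplus_{k=1}^{r} T^*\RR$ under which the Hamiltonian $\tilde H = \tfrac{1}{2}|\zt|^2$, the linear gauge $D\varphi^{f}$, the complex structure $J_D$, and each of the three affine Lagrangians $\afflag_i$ all split as direct sums or products. Problem~\ref{RMT} therefore decouples into $r$ independent one-dimensional problems. A short check shows that, in each coordinate $k$, the three lines enclose a right-isosceles triangle $T_k$ with legs $|(q+a)_k - (wX_0)_k|$; the triangles $T_k$ are pairwise similar, so the eventual $r$ solutions are related by a common scaling to a single universal map $v_*\colon D \to T_*$ (unit right-isosceles triangle). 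This similarity directly yields the convex-hull image containment claimed in the lemma, since $v(z)$ can then be written as the barycentric combination $(1-\alpha(z)-\beta(z))\,p_{31} + \alpha(z)\,p_{23} + \beta(z)\,p_{12}$ with $\alpha(z),\beta(z) \geqslant 0$ and $\alpha(z)+\beta(z) \leqslant 1$, where $p_{ij}$ are the claimed vertices.

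It therefore suffices to treat the one-dimensional case $\t_0 = \RR$, $T^*\t_0 \cong \RR^2$. When $f \equiv 0$ one has $J_D \equiv -J_{std}$, and Problem~\ref{RMT} becomes the antiholomorphic uniformisation of $D \setminus \{1, i, -i\}$ onto the interior of the triangle $T = T_*$; Schwarz-Christoffel produces the unique such uniformisation, whose image is exactly $T^\circ$ and which is regular. To pass to the actual $f$, I would run the continuity method along the linear homotopy $f_s := sf$, $s \in [0,1]$. For openness, the linearisation at any solution is an elliptic Cauchy-Riemann operator on $D$ with three corners and totally real boundary in the fixed lines $\afflag_i$; a corner-by-corner Maslov index calculation parallel to Lemma~\ref{Ind} gives Fredholm index zero, and a Carleman similarity argument in the spirit of Lemma~\ref{UCLemma} combined with a convex-function maximum principle forces the kernel to vanish. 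For properness, the three affine half-space functions cutting out $T$ are $J_D^{(s)}$-plurisubharmonic --- a direct $2\times 2$ matrix computation using $J_D^{(s)} = -D\varphi^{sf} J_{std} D\varphi^{-sf}$ --- so the maximum principle pins every solution inside $T$, giving a uniform $C^0$ bound. Hence the universal moduli space $\M \to [0,1]$ is an unramified finite cover, and the singleton fibre at $s=0$ forces a singleton fibre at $s=1$, giving existence, uniqueness and regularity.

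The main obstacle will be verifying the $J_D^{(s)}$-plurisubharmonicity of the three cutting-off functions in the presence of the $z$-dependent complex structure. This reduces to the explicit $2 \times 2$ computation just mentioned; once carried out, both the image containment and the triviality of the kernel become standard applications of the maximum principle, and the continuity method goes through without further difficulty.
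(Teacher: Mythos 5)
Your coordinate-splitting observation is correct and appealing: with respect to an orthonormal basis of $\t_0$ the structures $J_{std}$, $D\varphi^{f}_{\tilde{H}}$, hence $J_D$, and the three affine Lagrangians all decompose into identical $2\times 2$ blocks, so Problem \ref{RMT} decouples into $r$ planar problems, each an affine rescaling (possibly degenerate) of one universal planar problem; granted existence and uniqueness for that planar problem, this yields the convex-hull statement at once, whereas the paper first traps the image in the $2$-plane through the three intersection points via uniqueness and only then in the triangle. However, two steps of your planar argument have genuine gaps. First, the ``plurisubharmonicity'' on which both your $C^0$ bound (properness) and your triangle containment rest is false as stated: $J_D$ depends on the domain point $z$, not on the target, so for a solution $v$ and a linear form $h$ the Laplacian $\Delta(h\circ v)$ equals $h$ applied to a term linear in $dv$ with coefficients built from $\pt_sJ_D,\pt_tJ_D$ (in one convention, $h\bigl(((\pt_tJ_D)-(\pt_sJ_D)J_D)\,\pt_s v\bigr)$), which has no sign. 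One can rewrite this as a linear elliptic equation $\Delta(h\circ v)+A(z)\pt_s(h\circ v)+B(z)\pt_t(h\circ v)=0$ (because $h$ and $h\circ J_D(z)$ span the dual of the plane), but then the interior maximum principle alone does not confine $v$ to the triangle: the maximum of $h\circ v$ may sit on $\pt D$, and excluding this requires the Lagrangian boundary conditions, the Cauchy--Riemann equation and Hopf's boundary point lemma, with $\ker(h)$ chosen transverse to all three lines and the three corners handled separately. This boundary analysis, which is where the paper's containment proof actually lives, is absent from your sketch, and without it neither properness nor the image statement is established.

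Second, uniqueness is not established. The continuity method only propagates the solution count, and your singleton fibre at $s=0$ is justified by Schwarz--Christoffel, which gives uniqueness of the uniformization but does not by itself exclude non-injective solutions of the boundary-value problem (branched maps, or maps whose corner angles are increased by multiples of $\pi$, still send the three arcs to the three affine lines and the corners to the intersection points). The missing ingredient --- which also replaces your vague ``Carleman similarity plus maximum principle'' kernel argument --- is the energy/Stokes argument the paper uses: the difference of two solutions (or a kernel element of the linearization) satisfies the same linear equation with boundary in the linear Lagrangians underlying $\afflag_1,\afflag_2,\afflag_3$; the primitive $\frac{1}{2}\omega(x,dx)$ vanishes on linear Lagrangians, while $\omega(\cdot,J_D(z)\cdot)$ is pointwise definite since $J_D(z)$ is symplectically conjugate to $-J_{std}$, so the difference is constant and then zero because the three linear Lagrangians have trivial common intersection. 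Note that this gives uniqueness and regularity at $s=1$ directly, so your continuity method is only needed for existence --- and there the paper's route is lighter than yours: it degenerates $\afflag_3$ through $\afflag_1\cap\afflag_2$, applies the implicit function theorem near the constant solution, and then rescales by a homothety centred at $\afflag_1\cap\afflag_2$ (legitimate because $J_D(z)$ is linear on the fibre), thereby avoiding the global $C^0$ estimate that your argument still owes.
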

\noindent Notice that the projection of the convex hull of these points into $\t_0^{\vee}$ is the line segment joining $q+a$ and $wX_0$. Moreover, this segment does not move if we apply back the gauge transformation $\varphi^{f}$. Thus, the criteria for badness and ugliness stated in Proposition \ref{B+C/2} becomes clear.

\begin{proof}
Observe that the Lagrangians $\afflag_1,\afflag_2$ and $\afflag_3$ are affine linear. Therefore, the difference between two solutions to Problem \ref{RMT} satisfies a linear boundary condition. The uniqueness then follows from the standard energy argument\footnote{Notice we are using the fact that $J_D(z)$ is a constant complex structure for any $z$.}. To prove the existence, first push one of the Lagrangians, say $\afflag_3$, to meet $\afflag_1\cap \afflag_2$ so that Problem \ref{RMT} does have a solution, namely the constant one. This solution is regular by the energy argument as above and the fact that it has index zero (by direct computation). Applying the implicit function theorem, we see that the existence and regularity remain valid if we move $\afflag_3$ slightly away from $\afflag_1\cap \afflag_2$. The general case is then proved by applying a homothety at $\afflag_1\cap \afflag_2$, which works since $J_D(z)$ is a constant complex structure.

Let $v$ be the unique solution to Problem \ref{RMT}. Let $V$ be the unique affine two-dimensional subspace of $T^*\t_0$ passing through $\afflag_1\cap \afflag_2$, $\afflag_2\cap \afflag_3$ and $\afflag_1\cap \afflag_3$. Notice that for any $z$, $J_D(z)$ preserves the tangent spaces of $V$, i.e. the translate of $V$ containing the origin. One may parametrize $V$ by
\[
\begin{array}{ccl}
\RR^2 &\ra& V\\
(x,y) &\mapsto & (x(q+a-wX_0), wX_0+y(q+a-wX_0))
\end{array}.
 \]
Then the pre-images of $\afflag_1$, $\afflag_2$ and $\afflag_3$ are
\[\afflag_1':=\{x=0\},~\afflag_2':=\{x+y=1\}~\text{and}~\afflag_3':=\{y=0\}\]
respectively. By the above arguments, a solution to Problem \ref{RMT} for $(\RR^2;\afflag_1',\afflag_2',\afflag_3')$ exists and hence by the uniqueness, the image of $v$ is contained in $V$.

From now on, we assume $v$ is the solution to the latter problem. It remains to show that the image of $v$ is contained in the triangle
\[ T:= \{0\leqslant x,y,x+y\leqslant 1\}.\]
Suppose not. Take a point $p_0\in \im(v)$ lying outside $T$. Let $h:\RR^2\ra \RR$ be a linear form such that
\begin{enumerate}
\item $h(p_0)>h(p)$ for any $p\in T$; and
\item $\ker(h)$ is not parallel to any of $\afflag_1',\afflag_2',\afflag_3'$.
\end{enumerate}

\noindent Put $\bar{v}:=h\circ v$. Then $\bar{v}$ satisfies a second order elliptic PDE
\[ \Delta \bar{v} + A(z) \pt_s\bar{v} + B(z)\pt_t\bar{v} =0\]
for some smooth functions $A(z)$ and $B(z)$. Let $z_0\in D$ be a point at which $\bar{v}$ attains maximum. By the definition of $h$, $z_0$ cannot be $1,\pm i$. It cannot be an interior point neither, by Hopf's interior maximum principle. So it must be a boundary point $\ne 1,\pm i$. Write $z_0=e^{i\tt_0}$. Then $v$ is smooth at $z_0$ and $\pt_{\tt}v(e^{i\tt_0})$ lies in $\afflag_i\cap \ker(h)$ for some $i=1,2,3$. But $\afflag_i\cap \ker(h)=\mb{0}$ by assumption. It follows that $\pt_{\tt}v(e^{i\tt_0})=0$, and hence $\pt_rv(e^{i\tt_0})=0$, by the Cauchy-Riemann equation. This violates Hopf's boundary maximum principle, a contradiction.
\end{proof}
\end{myproof}

\bigskip
\begin{myproof}{Proposition}{\ref{C}} Write $\phi_0(q+a)=w_qX_0$ where $w_q\in W$. Let $\ul{u}$ be a bad quilt with input $x_q$ (and so output $\ul{x}_{w_q}$). We use the notations introduced at the beginning of Section \ref{pfA}. The key point is that $v=\tilde{u}_{\CO}\# u_M$ is contained in $T^*T_0$ so that the projections of $x_{in}$ and $x_{out}$ into the zero section are homotopic, and hence $q_{in}=q_{out}$. Moreover, $X_{in}=X_{out}$ so that that the two summations in the index formula \eqref{Index} are equal, and hence the index of $\ul{u}$ is zero.

We now prove the regularity. Since the index of $\ul{u}$ is zero, it suffices to show that the kernel of the linearization $D_{\ul{u}}$ of the Cauchy-Riemann operator is zero. Recall $D_{\ul{u}}$ consists of $D_{u_M}$ and $D_{u_{\CO}}$ where $D_{u_M}$ has the following expression
\begin{equation}\label{Du}
D_{u_M}\xi_M = (\nabla \xi_M)^{0,1} - \frac{1}{2} \left( J_M(\nabla_{\xi_M}J_M)(du_M-X_H\otimes df)\right)^{0,1}-(\nabla_{\xi_M} X_H\otimes df)^{0,1}
\end{equation}
and $D_{u_{\CO}}$ is nothing but the standard Cauchy-Riemann operator.

Let $\ul{\xi}=(\xi_M,\xi_{\CO})$ be an element of $\ker(D_{\ul{u}})$. Consider the bundle map $u_M^*T\UU\ra T_{w_qX_0}\CO$ induced by the projection $\pj$. Denote by $\xi'_M$ the image of $\xi_M$ in the trivial bundle $T_{w_qX_0}\CO$. By the seam condition, $\xi'_M$ and $\xi_{\CO}$ define a continuous function on $\S_M\cup \S_{\CO}$ with values in $T_{w_qX_0}\CO$.

We have seen that $\xi_{\CO}$ solves the standard Cauchy-Riemann equation with respect to the domain-dependent almost complex structure $J_{\CO}$. Let us look at what equation $\xi'_M$ solves. Notice that \eqref{Du} does not depend on the metric which induces the connection $\nabla$, since $(du_M-X_H\otimes df)^{0,1}=0$. We may choose the product metric with respect to a local trivialization of $\pj$. Using the assumption that $X_H$ is tangent to the fibers of $\pj$ and $J_M$ preserves their tangent spaces, it is not hard to see that the terms $\left( J_M(\nabla_{\xi_M}J_M)(du_M-X_H\otimes df)\right)^{0,1}$ and $(\nabla_{\xi_M} X_H\otimes df)^{0,1}$ are projected to zero in $T_{w_qX_0}\CO$ and $(\nabla \xi_M)^{0,1}$ to $\frac{1}{2}(\pt_s\xi'_M+J_M'\pt_t\xi'_M)$ where $J'_M$ is the induced almost complex structure on $T_{w_qX_0}\CO$. Therefore, $\xi_M'$ also solves the standard Cauchy-Riemann equation. Since the Lagrangian boundary condition for $\xi'_M$ is locally constant, the standard energy argument shows that $\xi_M'\equiv 0 \equiv \xi_{\CO}$. This implies that $\xi_M$ takes values in $T(T^*T_0)$. The proof is complete by using the regularity of $u_M$ (as a map into $T^*T_0$) which is proved in Proposition \ref{B+C/2}.
\end{myproof}

\bigskip
\begin{myproof}{Proposition}{\ref{D}} Let $\ul{u}$ be an ugly quilt. We have $q_{in}=q_{out}=:q$. Moreover, since the input is a \Ham chord, we have $\a(q+a)>0$ for any $\a\in R_{X_{in}}^+$. (It may not be true for the output.) By the index formula \eqref{Index},
\[\ind(\ul{u}) =\sum_{\a\in R_{X_{in}}^+ }\ma \lfloor 2\a(q +a) \rfloor -  \sum_{\a\in R_{X_{out}}^+ }\ma \lfloor 2\a(q+a) \rfloor.\]
Observe that $R_{X_{out}}^+\setminus R_{X_{in}}^+=-(R_{X_{in}}^+\setminus R_{X_{out}}^+)$. Hence
\begin{equation}\nonumber
\ind(\ul{u}) = \sum_{\substack{\a\in R_{X_{in}}^+ \\ \a(X_{out})<0}} \ma\left( \lfloor 2\a(q +a) \rfloor - \lfloor -2\a(q+a) \rfloor\right)
\end{equation}
which is clearly non-negative, since $ \lfloor \a(q +a) \rfloor \geqslant 0$ and $\lfloor -\a(q+a) \rfloor\leqslant -1$. Since $\ul{u}$ is not bad, we have $X_{in}\ne X_{out}$ (Proposition \ref{B+C/2}). Therefore, the set over which the above summation is taken is non-empty, and hence $\ind(\ul{u})>0$.
\end{myproof}
\section{Signs} \label{sign} This appendix deals with the sign issue which occurs when the coefficient ring $\coeff$ is not equal to $\ZZ_2$. We will follow essentially the general theory, notably \cite{Ab_JSG} and \cite{WWorient}, with one exception: we will not make use of the fact that $\RF$ is $Spin$\footnote{It is $Spin$ because its tangent bundle is stably trivial, see the proof of Lemma \ref{orientable}.}. Instead, we will follow \cite{LagPSS} which takes all homotopy classes of capping disks into account so that the existence of relative $Pin$ structures, whose main use is to orient coherently the determinant lines associated to non-homotopic capping disks, is not required. Of course, the price of gaining such flexibility is that any Floer-related cochain complexes are forced to become infinite dimensional. But this is not a disadvantage for us: enlarging $CF^*(\RF,\RF)$ is exactly our goal in order to obtain Theorem \ref{main}.

Our task consists of redefining $CW_b^*(L,L')$, $Hom_{\AA}^*(\ul{L},\ul{L}')$ and $QC^*(\RF)$ rigorously ($L,L'$ are either $L_0$ or $L_1$, and $\ul{L},\ul{L}'\in\AA$), and describing how signs are defined in any related $A_{\infty}$ operations on them. For the wrapped Floer cochain complex $CW_b^*(L,L')$, we define it in exactly the same way as in \cite{Ab_JSG}. Let us move on to consider $Hom_{\AA}^*(\ul{L},\ul{L}')$. We make the assumption that the flows of any Hamiltonians on $M$ involved preserve the coisotropic submanifold $\C$. This assumption suffices for our purpose. Notice that every such flow induces a \Ham flow on $\CO$, and this allows us to transform our problem to one where for each generalized \Ham chord $\ul{x}=(x_-,x_{\CO},x_+)\in \X(\ul{L},\ul{L}')$, the paths $x_{\pm}$ become points.

Let $\ul{x}\in \X(\ul{L},\ul{L}')$ and $(u,\g)$ be a \capping disk for $\ul{x}$. The lift $\g$ determines a Lagrangian subbundle $\RFF{(u,\g)}$ of $u^*T\CO$ over the boundary arc $\pt_a\DH$ which is defined by $\RFF{(u,\g)}(z):=T_{u(z)}\RFF{\g(z)}$. The \Ham chord $x_{\CO}$ induces a symplectic trivialization of $u^*T\CO$ over the boundary segment which brings the two endpoints of $\RFF{(u,\g)}$ to a pair of transversely intersecting Lagrangian subspaces, by the non-degeneracy condition on $\ul{x}$. The linearization of the Cauchy-Riemann operator associated to $(u^*T\CO,\RFF{(u,\g)})$ is denoted by $\mathcal{F}_{(u,\g)}$. The following lemma is an analogue of Condition (O) in \cite{LagPSS}.

\begin{lemma} Fix a homotopy class $[u,\g]$ of \capping disks for $\ul{x}$. The real line bundle over the space of representatives of $[u,\g]$ with fibers $\det(\mathcal{F}_{(u,\g)})$ is trivial.
\end{lemma}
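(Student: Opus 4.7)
The plan is to show the line bundle is trivial by verifying that its monodromy around every loop in the (path-connected) space of representatives of $[u,\g]$ is $+1$.

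First, I would encode a loop $\{(u_s,\g_s)\}_{s\in S^1}$ based at $(u,\g)$ as topological data. Since $u_s|_{[-1,1]}\equiv x_\CO(\tfrac{1-\cdot}{2})$ is independent of $s$ (condition (c) in Definition \ref{mcd}), collapsing the circle $S^1\times\{t\}\subset S^1\times\pt\DH$ to the single point $x_\CO(\tfrac{1-t}{2})$ for each $t\in[-1,1]$ turns the boundary torus of $S^1\times\DH$ into a topological $2$-sphere $\Sigma$. The family $\{u_s\}$ restricts to a continuous map $\bar U:\Sigma\to\CO$, and the Lagrangian tangent spaces $\RFF{(u_s,\g_s)}$ defined along $S^1\times\pt_a\DH$ assemble into a Lagrangian subbundle $\bar\LL$ of $\bar U^*T\CO$ on $\Sigma$, with the data of the lifts $\g_s$ recording which torus fiber of $\sq:\C\ra\CO$ is chosen pointwise.

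Second, by the standard index-theoretic interpretation of the monodromy of determinant line bundles of families of Cauchy-Riemann operators (compare \cite{WWorient} and \cite[Appendix]{Ab_JSG}), the monodromy equals the mod-$2$ reduction of an integer-valued topological invariant of the pair $(\bar V,\bar\LL):=(\bar U^*T\CO,\bar\LL)\to\Sigma$, which can be expressed via characteristic classes. Schematically,
\[ \mathrm{mon}=(-1)^{\langle w_2(T\CO),[\bar U]\rangle + \langle w_1(T\RF)^{\smile 2},\,[\Sigma]\rangle}. \]

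Third, I would discharge the two contributions. The term with $w_1(T\RF)$ vanishes since $\RF$ is orientable (Lemma \ref{orientable}). For the remaining term, I would use that $\CO$ carries a $G$-invariant complex structure $Y\mapsto J_{\CO,Y}$ (Lemma \ref{tangentspaceofLag}), so $T\CO$ is a complex vector bundle and $w_2(T\CO)\equiv c_1(T\CO)\pmod 2$. It then suffices to show $\langle c_1(T\CO),[\bar U]\rangle$ is even.

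The main obstacle will be this last evenness statement. The crucial input is that the loop $(u_s,\g_s)$ stays within a \emph{single} homotopy class of capping disks, which restricts the sphere $[\bar U]$ sharply. Concretely, the loop of boundary lifts $\g_s$ returns to $\g_0$ in $\C$, so by the exact homotopy sequence of the torus bundle $\sq:\C\ra\CO$ (whose fiber has $\pi_2=0$) the class $[\bar U]$ lies in the image of $\pi_2(\C)\ra\pi_2(\CO)$; combined with Proposition \ref{admiss} (which packages monotonicity and orientability of $\CO,\RF,C$ simultaneously and forces minimal Maslov number $\geqslant 2$ on $\RF$ and $C$), this controls $\langle c_1(T\CO),[\bar U]\rangle$ modulo $2$ and yields the desired vanishing. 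A clean execution of this last step is where the real technical weight of the argument sits.
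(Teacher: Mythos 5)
There is a genuine gap, and it sits exactly where you defer the ``real technical weight.'' Your schematic monodromy formula is not the right criterion: the monodromy of $\det({\cal F}_{(u,\g)})$ around a loop $s\mapsto(u_s,\g_s)$ is governed by the second Stiefel--Whitney class of the \emph{Lagrangian boundary datum}, i.e.\ of the vertical tangent bundle $\RFF{\CO}$ of $\C\ra G/K$ pulled back along the map $(s,z)\mapsto\g_s(z)$, evaluated on the $2$-sphere $S\in\pi_2(\C)$ swept out by the boundary lifts (the endpoints of each $\g_s$ are pinned by $\ul{x}$, so the swept cylinder closes up to a sphere in $\C$). Your formula contains no such term: it only involves $w_2(T\CO)$ on $[\bar U]$ and a $w_1(T\RF)^2$ term, and both of these are vacuous --- $[\bar U]$ is automatically null-homotopic in $\CO$ because the solid torus of disks $\{u_s\}_{s\in S^1}$ (with the fixed strip along $x_{\CO}$ collapsed) fills it, and the $w_1$ term dies since $\RF$ is orientable. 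So your criterion would declare every such determinant bundle trivial with no geometric input, which cannot be correct; the genuine obstruction, which your argument never sees, lives on the sphere in $\C$, not on a sphere in $\CO$. Relatedly, the proposed last step is misdirected: evenness of $\langle c_1(T\CO),[\bar U]\rangle$ is trivially true (the class is zero) and in any case monotonicity (Proposition \ref{admiss}) relates areas to Maslov indices and has no bearing on $w_2$/orientability questions.

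The correct use of the fibration $\sq:\C\ra\CO$, which you invoke only obliquely and for the wrong class, is the following (and is the paper's argument, in the framework of \cite{LagPSS}): since the disks $u_s$ fill the $\sq$-image of the swept sphere, the class $S$ lies in $\ker\bigl(\pi_2(\sq):\pi_2(\C)\ra\pi_2(\CO)\bigr)$, so the triviality of the determinant bundle reduces to the vanishing of $w_2(\RFF{\CO})$ on $\ker(\pi_2(\sq))$; and this kernel is actually zero, because the fiber of $\sq$ is the torus $T_0$ with $\pi_2(T_0)=0$, so the homotopy exact sequence
\[ \cdots\ra\pi_2(T_0)\ra\pi_2(\C)\ra\pi_2(\CO)\ra\cdots \]
makes $\pi_2(\sq)$ injective. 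In short: you applied the exact sequence to the sphere in $\CO$ (to conclude it is in the image of $\pi_2(\sq)$, which is both true and useless) instead of to the sphere in $\C$ (to conclude it vanishes), and you never brought the vertical bundle $\RFF{\CO}$ --- the actual carrier of the obstruction --- into the computation.
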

\begin{proof}
Following \cite{LagPSS}, it amounts to showing that the second Stiefel-Whitney class $w_2(\RFF{\CO})$ of the vertical tangent bundle $\RFF{\CO}$ of the fiber bundle $\C\ra G/K$ vanishes on $\ker(\pi_2(\sq))$. But the last subgroup is zero by $\pi_2(T_0)=0$ and the long exact sequence
\[\cdots \ra \pi_2(T_0)\ra \pi_2(\C)\ra \pi_2(\CO)\ra \cdots.\]
\end{proof}

\noindent It follows that we can speak of the determinant line associated to a given homotopy class of \capping disks for $\ul{x}$, without referring to an explicit representative. We denote this line by $o_{[u,\g]}$. Before defining $Hom_{\AA}^*(\ul{L},\ul{L}')$, we also need to assign determinant lines to the points $x_{\pm}$. This is done under the framework of \cite{Ab_JSG}. Fix gradings on $\Hor$ and $\Ver$, the horizontal and vertical subbundles of $TM$. These gradings give rise to a unique (up to homotopy) Lagrangian path in $T_{x_-}M$ (resp. $T_{x_+}M$) which starts at $(\Ver)_{x_-}$ (resp. $(\Hor)_{x_+}$) and ends at $(\Hor)_{x_-}$ (resp. $(\Ver)_{x_+}$). Then these Lagrangian paths determine determinant lines $o_{x_{\pm}}$.

\begin{definition} (Redefinition of Definition \ref{morphismspace}) The morphism space from $\ul{L}$ to $\ul{L}'$ is defined to be
\[ Hom_{\AA}^*(\ul{L},\ul{L}') := \bigoplus |o_{x_-}\otimes o_{[u,\g]}\otimes o_{x_+}| \]
where the direct sum is taken over all $\ul{x}=(x_-,x_{\CO},x_+)\in\X(\ul{L},\ul{L}')$ and all homotopy classes $[u,\g]$ of \capping disks for $\ul{x}$.
\end{definition}

\noindent The definition of $QC^*(\RF)$ is similar and is left to the readers.

It remains to describe the relevant operations where signs are taken into account. Recall \cite{WWorient} the general way of orienting moduli spaces of pseudoholomorphic quilted maps is to homotope the given seam conditions into split ones, and then apply the standard methods to orient the determinant lines associated to the resulting Lagrangian boundary conditions on the individual patches. In our case, $C\simeq G\times_K\RF$ is the only seam condition. An astonishing feature of $C$ is that there is a foliation of $M$ by cotangent fibers $L_p$ such that $L_p\circ C=\RFF{p}$. By Lemma \ref{easy}, we can homotope $TC$ to $\Hor|_C\oplus \RFF{\CO}$. With this explicit split seam condition, we orient the determinant line associated to the output data as follows. Consider the patches which are labelled by $M$. Notice that the possible Lagrangian boundary conditions for these patches are $\Hor$ or $\Ver$. If we fix in advance relative $Pin$ structures on them as well as the aforementioned Lagrangian paths associated to all possible $x_{\pm}$, then we can orient the output determinant lines following \cite{Ab_JSG}. As for the patches which are labelled by $\CO$, we glue them with all input \capping diks (if there are any). Since the Lagrangian boundary conditions become $\RFF{\CO}$, the Cauchy-Riemann operators on the resulting output \capping disks are exactly the ones which define $o_{[u,\g]}$. Combining the above two cases, we obtain the orientation on the output determinant line.

Concerning operations on $QC^*(\RF)$, we refer to \cite{LagPSS} for a detailed description.


\begin{thebibliography}{99}
\bibitem{AS}
A. Abbondandolo, M. Schwarz: On the Floer homology of cotangent bundles. Comm. Pure Appl. Math. \textbf{59}(2), 254-316 (2006).

\bibitem{Ab_JSG}
M. Abouzaid: On the wrapped Fukaya category and based loops. J. Symplectic Geom. \textbf{10}(1), 27-79 (2012).

\bibitem{Araki} S. Araki: On Bott-Samelson $K$-cycles associated with symmetric spaces. Osaka J. Math. \textbf{13}(2), 87-133 (1962).

\bibitem{BCP} P. Biran, O. Cornea: A Lagrangian quantum homology. In New perspectives and challenges in symplectic field theory, volume 49 of CRM Proc. Lecture Notes, pages 1-44. Amer. Math. Soc., Providence, RI, 2009.

\bibitem{BMich} R. Bott: The space of loops on a Lie group. Michigan Math. J. \textbf{5}, 35-61 (1958).

\bibitem{BS} R. Bott, H. Samelson: Applications of the theory of Morse to symmetric spaces. Amer. J. Math. \textbf{80}(4), 964-1029 (1958).

\bibitem{twist}  F. Burstall, J. Rawnsley: Twistor theory for Riemannian symmetric spaces. Lecture Notes in Math. \text{1424}, Springer (1990).

\bibitem{wonderful} C. de Concini, C. Procesi: Complete symmetric varieties. In Invariant theory (Montecatini, 1982), volume 996 of Lecture Notes in Math., pages 1-44. Springer, Berlin, 1983.

\bibitem{EL}
J. Evans, Y. Lekili: Generating the Fukaya categories of Hamiltonian $G$-manifolds. J. Amer. Math. Soc. \textbf{32}(1), 119-162 (2019).

\bibitem{Floer} A. Floer: Witten's complex and infinite-dimensional Morse theory. J. Differential Geom. \textbf{30}, 207-221 (1989).

\bibitem{Duke}
A. Floer, H. Hofer, D. Salamon: Transversality in elliptic Morse theory for the symplectic action. Duke Math. J. \textbf{80}(1), 251-292 (1995).

\bibitem{FOOO} Fukaya, K., Oh, Y., Ohta, H., Ono, K.: Lagrangian intersection Floer theory - anomaly and obstruction. Part I and II. AMS/IP Stud. Adv. Math. \textbf{46}. American Mathematical Society, Providence, RI (2009).

\bibitem{Fu} W. Fulton: Introduction to toric varieties. Annals of Math. Studies \textbf{131}. Princeton University Press, Princeton (1993).

\bibitem{Ganatra}
S. Ganatra: Symplectic cohomology and duality for the wrapped Fukaya category. Ph.D. thesis, MIT, 2012.

\bibitem{GS} V. Guillemin, S. Sternberg: The moment map revisited. J. Differential Geom. \textbf{69}(1), 137-162 (2005).

\bibitem{BSZ} R. Kocherlakota: Integral homology of real flag manifolds and loop spaces of symmetrical spaces. Adv. Math. \textbf{110}(1), 1-46 (1995).

\bibitem{LS} T. Lam, M. Shimozono: Quantum cohomology of $G/P$ and homology of affine Grassmannian.  Acta Math. \textbf{204}(1), 49-90 (2010).

\bibitem{LL}
Y. Lekili, M. Lipyanskiy: Geometric composition in quilted Floer theory. Adv. Math. \textbf{244}, 268-302 (2013).

\bibitem{LLi1} N.C. Leung, C. Li: Functorial relationships between $QH^*(G/B)$ and $QH^*(G/P)$. J. Differential Geom. \textbf{86}(2), 303-354 (2010).

\bibitem{LLi2} N.C. Leung, C. Li: Gromov-Witten invariants for $G/B$ and Pontryagin product for $\O K$. Trans. Amer. Math. Soc. \textbf{364}(5), 2567-2599 (2012).

\bibitem{LLi3} N.C. Leung, C. Li: Classical aspects of quantum cohomology of generalized flag varieties. Int. Math. Res. Not. IMRN \textbf{2012}(16), 3706-3722 (2012).

\bibitem{MWW}
S. Ma'u, K. Wehrheim, C. Woodward: $A_{\infty}$-functor for Lagrangian correspondences. Selecta Math. (N.S.) \textbf{24}(3), 1913-2002 (2018).

\bibitem{MS}
D. McDuff, D. Salamon: J-holomorphic curves and symplectic topology. Amer. Math. Soc. Colloq. Publ. \textbf{52}. American Mathematical Society, Providence, RI (2004).

\bibitem{Oh} Y.G. Oh: Floer cohomology, spectral sequences, and the Maslov class of Lagrangian embeddings. Int. Math. Res. Not. \textbf{7}, 305-346 (1996).

\bibitem{Peter} D. Peterson: Quantum cohomology of $G/P$. Lecture notes, M.I.T., Spring 1997.

\bibitem{PSS}
S. Piunikhin, D. Salamon, M. Schwarz: Symplectic Floer-Donaldson theory and quantum cohomology. Contact and symplectic geometry. Cambridge University Press, 171-200 (1996).

\bibitem{Warner} G. Warner, Harmonic analysis on semi-simple Lie groups. II. Springer, Berlin (1972).

\bibitem{WW1}
K. Wehrheim, C. Woodward: Quilted Floer cohomology. Geom. Topol. \textbf{14}, 833-902 (2010).

\bibitem{WW1.5}
K. Wehrheim, C. Woodward: Quilted Floer trajectories with constant components: corrigendum to ``Quilted Floer cohomology''. Geom. Topol. \textbf{16}, 127-154 (2012).

\bibitem{WW2}
K. Wehrheim, C. Woodward: Floer cohomology and geometric composition of Lagrangian correspondences. Adv. Math. \textbf{230}(1), 177-228 (2012).

\bibitem{WWorient} K. Wehrheim, C. Woodward: Orientations for pseudoholomorphic quilts. Preprint (2015). \href{https://arxiv.org/abs/1503.07803}{arXiv:1503.07803}.

\bibitem{WW3}
K. Wehrheim, C. Woodward: Pseudoholomorphic quilts. J. Symplectic Geom. \textbf{13}(4), 849-904 (2015).

\bibitem{W} A. Weinstein: Symplectic geometry. Bull. Amer. Math. Soc. (N.S.) \textbf{5}(1), 1-13 (1981).

\bibitem{NASC}
C. Woodward: The classification of transversal multiplicity-free group actions. Ann. Global Anal. Geom. \textbf{14}, 3-42 (1996).

\bibitem{LagPSS} F. Zapolsky: The Lagrangian Floer-quantum-PSS package and canonical orientations in Floer theory. Preprint (2015). \href{https://arxiv.org/abs/1507.02253}{arXiv:1507.02253}.
\end{thebibliography}
\end{document}